\documentclass[11pt,leqno]{amsart}
\usepackage[utf8]{inputenc}
\usepackage{graphicx}
\usepackage{siunitx}
\usepackage{amsmath,amsthm,amsfonts,amssymb}
\usepackage{mathtools}
\usepackage{caption}
\usepackage[margin=1.20in]{geometry}
\usepackage{float}
\usepackage{hyperref}

\DeclareMathAlphabet\mathbfcal{OMS}{cmsy}{b}{n}

\newtheorem{theorem}{Theorem}[section]
\newtheorem{lemma}[theorem]{Lemma}
\newtheorem{remark}[theorem]{Remark}
\newtheorem{exmp}[theorem]{Example}
\newtheorem{corollary}[theorem]{Corollary}
\newtheorem{prop}[theorem]{Proposition}
\theoremstyle{definition}
\newtheorem{definition}[theorem]{Definition}
\newtheorem{case}{Case}
\newtheorem{subcase}{Case}
\numberwithin{subcase}{case}

\graphicspath{{images/}}

\title{Interpolating classical partitions of the set of positive integers}
\author[Weiru Chen]{Weiru Chen}
\email[Weiru Chen]{wchen108@illinois.edu}
\author[Jared Krandel]{Jared Krandel}
\email[Jared Krandel]{jaredjk2@illinois.edu}
\date{}

\begin{document}
\maketitle

\begin{abstract}

We construct an easily described family of partitions of the positive integers into $n$ disjoint sets with essentially the same structure for every $n \geq 2$. In a special case, it interpolates between the Beatty $\frac{1}{\phi} + \frac{1}{\phi^2} = 1$ partitioning ($n=2$) and the 2-adic partitioning in the limit as $n \rightarrow \infty$. We then analyze how membership of elements in the sets of one partition relates to membership in the sets of another. We investigate in detail the interactions of two Beatty partitions with one another and the interactions of the $\phi$ Beatty partition mentioned above with its ``extension'' to three sets given by the construction detailed in the first part. In the first case, we obtain detailed results whereas the second case we place some restrictions on the interaction but cannot obtain exhaustive results.

\end{abstract}

\newpage

\section{Introduction}

Beatty's $\frac{1}{x} + \frac{1}{y} = 1$ theorem \cite{B} provides uncountably many partitionings of the set of positive integers. Among these are partitionings that encode winning strategies for variations of Wythoff's game \cite{W}. Particularly relevant is the partition given by 
\begin{equation*}
    \frac{1}{\phi} + \frac{1}{\phi^2} = 1
\end{equation*}
with the members of the two distinct sequences 
\begin{equation*}
    a(k) = \lfloor k\phi \rfloor \quad\text{and}\quad  b(k) = \lfloor k\phi^2 \rfloor
\end{equation*}
giving the two sets of the partition. Another significant partition, this time into infinitely many sets, is the 2-adic partition. Here, the sets are $2^e\cdot M$ for $e = 0, 1, 2, \ldots, $ where $M$ is the set of all positive odd integers. This partitioning is, for example, the key to showing that the partial sums of the harmonic series are (with the trivial exception) never integral. See [p. 32, problem 36] of \cite{N-Z-M}.

We construct an easily described family of partitions of the positive integers into $n$ disjoint sets with essentially the same structure for every $n \geq 2$. In a special case, it interpolates between the Beatty golden ratio partitioning $(n = 2)$ and the 2-adic partitioning in the limit as $n\rightarrow\infty$. 

Next, we study how specific examples of partitions in the constructed family relate to one another. If we define $A = \{ \lfloor k\phi \rfloor \}_{k=1}^\infty$ and $B = \{ \lfloor k\phi^2 \rfloor \}_{k=1}^\infty$, then the positive integers can be described as $A\cup B$ or $D_1\cup D_2 \cup D_3$ where the former is an $n=2$ partition and the latter is an $n=3$ partition to be described (see Section 2.4) which ``extends'' the $A,B$ partition to 3 sets. In order to carry out this analysis, we write $D_j$ as a sequence $\{d_{1j}, d_{2j}, d_{3j}, \ldots\}$ with $d_{ij} < d_{(i+1)j},\ 1\leq\ j \leq 3$. Then the $n=3$ partitioning corresponds to the 3 infinite columns of an $\infty \times 3$ matrix shown in Table 1. 
\begin{table}[h]
    \centering
    \begin{tabular}{c|c|c}
        $D_3$ & $D_2$ & $D_1$ \\ \hline
        1 & 2 & 4 \\
        3 & 6 & 11 \\
        5 & 9 & 15 \\
        7 & 13 & 22 \\
        8 & 17 & 29 \\
        10 & 20 & 33 \\
        \vdots & \vdots & \vdots
    \end{tabular}
    \caption{An enumeration of the first few values of $D_1,D_2,$ and $D_3$}
    \label{tab:my_label}
\end{table}
We now take the transposed point of view and examine the rows of this matrix. They form an infinite collection of 3-vectors $(d_{k1}, d_{k2}, d_{k3})$. Each such 3-vector has, a priori, $2^3 = 8$ possible classifications depending on the $A$ or $B$ membership of its individual components. For example, $(d_{11}, d_{12}, d_{13}) = (1,2,4)\in A\times B \times A$ while $(d_{21}, d_{22}, d_{23}) = (3,6,11)\in A\times A\times A$; these are two of the 8 distinct possibilities. We perform a detailed technical analysis to determine which of these cases occur. The result seems non-obvious: exactly 6 of the 8 cases can occur. This leads to a natural question that we have been unable to resolve: With what frequencies do each of these 6 cases occur?

We preface the above investigation by a similar but more tractable problem. Consider the Beatty partitioning associated to
\begin{equation*}
    \frac{1}{\phi^3} + \frac{1}{\phi^2/2} = 1,
\end{equation*}
and let $C$ and $D$ be the two sets in this partition. One can ask how each of $C$ and $D$ splits into $A$ and $B$ elements and vice versa. In this case, we derive a summary of fractional parts identities which gives fairly complete information about the problem. We then extend this to a more generalized case involving the Fibonacci numbers that can be used to further investigate such classification problems. This Beatty partitioning case is of special interest, as the Beatty sequence $\lfloor n\phi^3 \rfloor$ plays a role (``Long's conjecture'') in the theory of additive partitions, a type of partitioning that is not of Beatty type, but is closely related to Beatty-type partitions. Specifically, it is of interest with respect to the unique partitioning of the positive integers into disjoint sets $A^*$ and $B^*$ such that sums of distinct elements of $A^*$, and also of $B^*$, never equal a Fibonacci number. See \cite{C} for the proof of Long's conjecture and \cite{C-L} for further background information.

Two main tools for our investigations are the KLM formula (see \cite{F-P-S}[p. 256] for a proof) and Theorem 3.2, a proof of which is available in \cite{P-S}. For general information on Beatty sequences and especially explanations as to why the naive generalization of Beatty's theorem to sums of 3 or more irrational reciprocals is false (Uspensky's theorem) see \cite{F}, \cite{N}, and \cite{U}. Interesting ways of partitioning the set of all real numbers into $n$ disjoint subsets are described in \cite{H-Z}. 

\section{n-Set Partition Theorem}

\subsection{Statement of Theorem and Examples}

\begin{definition}

Let $r_i, i = 1,2,3,\ldots,$ be real numbers and $S$ a set of real numbers. Define
\begin{align*}
    S \pm r_1 &= \{s+r_1:s \in S \} \cup \{s-r_1:s \in S \},\\
    S \pm r_1 \pm r_2 &= (S \pm r_1) \pm r_2,\\
    \text{and } S \pm r_1 \pm \ldots \pm r_m &= (S \pm r_1 \pm \ldots \pm r_{m-1}) \pm r_m.
\end{align*}

\end{definition}

\vspace{\baselineskip}

\begin{theorem}

Let $n \geq 2$ and define 
\[
    G = \{2^{n-1}, 2^{n-1} + 2^{n-2},\ldots,\sum_{i=1}^{n}2^{n-i}\}.
\]
Let $l$ be a non-decreasing integer-valued sequence such that $l(1) = 2^{n-1}$ and $l(k+1) - l(k) \in G$ for all $k$. If we set $D_1 = \{l(k)\}_{k=1}^{\infty}$, then the sets
$$D_1, D_2 = D_1 \pm 2^{n-2}, D_3 = D_1 \pm 2^{n-2} \pm 2^{n-3}, \ldots, D_n = D_1 \pm 2^{n-2} \pm 2^{n-3} \pm \ldots \pm 2 \pm 1 $$
are a partition of the set of all positive integers into $n$ disjoint sets.

\end{theorem}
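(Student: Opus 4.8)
\emph{Proof proposal.} The plan is to recast the definition so that each element of $D_1$ governs a short block of consecutive integers around it, coloured by a fixed $2$-adic rule, and then to check that these blocks tile the positive integers in a self-consistent way; the gap set $G$ is tailored precisely so that the colouring survives the overlaps of consecutive blocks. Write $\nu(m)$ for the $2$-adic valuation of a nonzero integer $m$. Unwinding the $\pm$ operation defined above gives $D_j=\bigcup_{k\ge1}\{\,l(k)+t:t\in E_j\,\}$, where $E_1=\{0\}$ and, for $2\le j\le n$, $E_j=\bigl\{\sum_{p=n-j}^{n-2}\varepsilon_p 2^{p}:\varepsilon_p\in\{\pm1\}\bigr\}$. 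The substitution $\varepsilon_p=2\delta_p-1$ identifies $E_j$ with the set of odd multiples of $2^{\,n-j}$ in the interval $[-(2^{n-1}-2^{\,n-j}),\,2^{n-1}-2^{\,n-j}]$; equivalently, inside the window $W=\{t\in\mathbb Z:|t|\le 2^{n-1}-1\}$ one has $E_j=\{t\in W: t\ne0,\ \nu(t)=n-j\}$ for $j\ge2$, since the bound $|t|\le 2^{n-1}-1$ cuts off no odd multiple of $2^{\,n-j}$ that would otherwise occur (the first omitted one has absolute value $2^{n-1}+2^{\,n-j}$). Because a nonzero integer has a unique $2$-adic valuation and $|t|\le 2^{n-1}-1$ forces $\nu(t)\le n-2$, the sets $E_1,\dots,E_n$ partition $W$, with $|E_j|=2^{\,j-1}$ summing to $|W|=2^{n}-1$. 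Thus there is a colouring $c\colon W\to\{1,\dots,n\}$, $c(0)=1$ and $c(t)=n-\nu(t)$ for $t\ne0$, with $D_j=\bigcup_{k\ge1}\{\,l(k)+t:t\in W,\ c(t)=j\,\}$.

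Next I would show the blocks cover $\mathbb Z^{+}$. Put $I_k=\{\,m\in\mathbb Z:|m-l(k)|\le 2^{n-1}-1\,\}$, so $\bigcup_{j=1}^n\{l(k)+t:t\in W,\ c(t)=j\}=I_k$ by the partition of $W$, and every element of $I_k$ is positive since $l(k)\ge l(1)=2^{n-1}$. As $\min G=2^{n-1}$, the sequence $l$ is strictly increasing and unbounded, and $I_1=\{1,\dots,2^n-1\}$. Since $l(k+1)-l(k)\le\max G=2^n-1$, the left endpoint of $I_{k+1}$ is at most one more than the right endpoint of $I_k$, so consecutive blocks are adjacent or overlap; hence $\bigcup_k I_k=\mathbb Z^{+}$, and in particular $\bigcup_j D_j=\mathbb Z^{+}$.

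The heart of the argument is consistency on overlaps. Since $l(k+2)-l(k)\ge 2\cdot 2^{n-1}=2^{n}$, the block $I_{k+2}$ lies entirely to the right of $I_k$, so each integer lies in at most two blocks, necessarily consecutive ones. It therefore suffices to prove: if $m\in I_k\cap I_{k+1}$ then $c\bigl(m-l(k)\bigr)=c\bigl(m-l(k+1)\bigr)$; granting this, each $m$ acquires a well-defined colour $c_m$, lies in $D_j$ exactly when $j=c_m$ by the identity above, and with the covering this proves $D_1,\dots,D_n$ partition $\mathbb Z^{+}$. Put $g=l(k+1)-l(k)\in G$, say $g=2^n-2^{\,n-j}$, and $t=m-l(k)$, so $t-g=m-l(k+1)$; neither $l(k)$ nor $l(k+1)$ lies in the other's block, hence $t\ne0$ and $t-g\ne0$, and it is enough to show $\nu(t)=\nu(t-g)$. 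From $m\in I_{k+1}$ we get $t\ge g-(2^{n-1}-1)=2^{n-1}-2^{\,n-j}+1$, and from $m\in I_k$ we get $t\le 2^{n-1}-1$; setting $u=2^{n-1}-t$ gives $1\le u\le 2^{\,n-j}-1$, hence $\nu(u)\le n-j-1$. Then $\nu(t)=\nu(2^{n-1}-u)=\nu(u)$ because $\nu(u)<n-1=\nu(2^{n-1})$, while $t-g=-\bigl((2^{n-1}-2^{\,n-j})+u\bigr)$ with $\nu(2^{n-1}-2^{\,n-j})\ge n-j>\nu(u)$, so $\nu(t-g)=\nu(u)$ as well; therefore $c(t)=c(t-g)$.

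The main obstacle is really just the two pieces of bookkeeping above: pinning down $E_j$ as the valuation class $\{\nu=n-j\}$ inside $W$ (the cardinality count and the check that $W$ clips nothing), and the valuation identity $\nu(t)=\nu(t-g)$, which rests on the observation that $2^{n-1}-t$ is forced to be at most $2^{\,n-j}-1$ exactly in the overlap range corresponding to a gap $g=2^n-2^{\,n-j}$. Everything else is elementary interval arithmetic, and the conceptual point is that $G$ is precisely the set of shifts under which the $2$-adic colouring $c$ is invariant on the overlap of two consecutive blocks.
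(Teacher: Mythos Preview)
Your proof is correct and takes a genuinely different route from the paper. Both arguments share the coarse architecture: identify $D_j$ with offsets from each $l(k)$, show the blocks $I_k$ cover $\mathbb Z^{+}$, show only consecutive blocks can overlap, and then analyse the overlap. The divergence is in how the offset sets and the overlap are handled. The paper never identifies the $2$-adic structure: it proves directly (its Lemma~2.12, via a parity lemma, Lemma~2.10) that the linear forms $L_0,\dots,L_{n-1}$ hit each point of $I(t)$ exactly once, and for the overlap it does a page-long sign-chasing argument, forcing $\epsilon_{n-2}=\cdots=\epsilon_{n-b-1}=+1$ on one side and $\delta_{n-2}=\cdots=\delta_{n-b-1}=-1$ on the other before invoking Lemma~2.10 to reach a contradiction. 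Your observation that $E_j=\{t\in W:\nu(t)=n-j\}$ replaces all of that with the statement ``nonzero integers have a unique $2$-adic valuation,'' and your overlap argument collapses to the three-line valuation computation $\nu(t)=\nu(u)=\nu(t-g)$ via $u=2^{n-1}-t$. This is cleaner and more conceptual: it exposes exactly why the gap set $G=\{2^n-2^{n-b}\}$ works (each such $g$ has $\nu(g)=n-b$, strictly larger than $\nu(u)$ on the overlap), and it foreshadows the paper's later Theorem~2.18 on the $2$-adic limit, which otherwise appears unmotivated. The paper's approach buys only the avoidance of valuation language; yours buys brevity and a structural explanation. One cosmetic point: you reuse $j$ both as the index on $D_j$ and as the parameter in $g=2^n-2^{n-j}$; renaming the latter (say to $b$, as the paper does) would avoid any momentary confusion.
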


\begin{corollary}
Let $h(k)$ be a non-decreasing integer-valued sequence with $h(1)=1$ and \\ $1 \leq h(k+1) - h(k) \leq 2$. Put $l(k) = t(k) = (2^{n-1} - 1)h(k) + k$ for $n \geq 2$.Then the sets $D_1,D_2,\ldots, D_n$ defined in Theorem 2.2 with $D_1 = \{t(k)\}_{k=1}^{\infty}$ partition the positive integers. 
\end{corollary}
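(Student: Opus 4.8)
The plan is to verify that the sequence $t$ satisfies the three hypotheses imposed on the auxiliary sequence $l$ in Theorem 2.2, namely that $t$ is a non-decreasing integer-valued sequence, that $t(1) = 2^{n-1}$, and that $t(k+1) - t(k) \in G$ for every $k$; once these are checked, the conclusion follows immediately by applying Theorem 2.2 with $D_1 = \{t(k)\}_{k=1}^{\infty}$.

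The first two checks are routine. Since $n \geq 2$, the coefficient $2^{n-1} - 1$ is a positive integer, and $h$ is integer-valued, so $t(k) = (2^{n-1}-1)h(k) + k$ is integer-valued. From the identity $t(k+1) - t(k) = (2^{n-1}-1)\bigl(h(k+1)-h(k)\bigr) + 1$ together with $h(k+1) - h(k) \geq 1$ we get $t(k+1) - t(k) \geq 2^{n-1} > 0$, so $t$ is strictly increasing (hence non-decreasing). Finally, $h(1) = 1$ gives $t(1) = (2^{n-1}-1)\cdot 1 + 1 = 2^{n-1}$.

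The main point is the gap condition. Because $h(k+1) - h(k) \in \{1,2\}$, the displayed identity for $t(k+1) - t(k)$ yields exactly two possible values: $2^{n-1}$ when the $h$-difference is $1$, and $2(2^{n-1}-1) + 1 = 2^n - 1$ when the $h$-difference is $2$. It then suffices to observe that both values lie in $G$: unwinding the definition, the $j$-th element of $G$ is $\sum_{i=1}^{j} 2^{n-i} = 2^n - 2^{n-j}$ for $1 \leq j \leq n$, so the smallest element of $G$ (at $j=1$) is $2^{n-1}$ and the largest (at $j=n$) is $2^n - 1$. Hence $t(k+1) - t(k) \in \{2^{n-1}, 2^n-1\} \subseteq G$ for all $k$, and Theorem 2.2 applies to give the desired partition.

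I do not expect a substantive obstacle here: the corollary is essentially a specialization of Theorem 2.2, and the only care needed is the elementary bookkeeping identifying the extreme elements of $G$. It is worth remarking that this particular family uses only the two endpoint gaps $2^{n-1}$ and $2^n - 1$ of the allowed gap set $G$; the intermediate values of $G$ permitted by Theorem 2.2 play no role in Corollary 2.3.
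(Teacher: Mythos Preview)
Your proof is correct and follows essentially the same approach as the paper: verify $t(1)=2^{n-1}$ and then split on the two possible values of $h(k+1)-h(k)$ to show the gap $t(k+1)-t(k)$ is either $2^{n-1}$ or $2^n-1$, both in $G$. Your version is slightly more thorough in explicitly verifying monotonicity and identifying the general $j$-th element of $G$, and you also correctly observe that only the extreme gaps of $G$ are used here.
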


\begin{proof}
We need to check that $t(1) = 2^{n-1}$ and $t(k+1)-t(k)$ takes on values in $G$. The former is clear from the definition of $t$. To prove the latter, consider the two cases where $h(k+1) - h(k) = 1$ and  where $h(k+1) - h(k) = 2$. In the former case, we have
\begin{align*}
    t(k+1) - t(k) &= (2^{n-1}-1)(h(k+1)-h(k)) + 1 \\
    &= (2^{n-1} - 1) + 1 = 2^{n-1}
\end{align*}
which is the first number in $G$. In the second case, the same calculation gives
\begin{align*}
    t(k+1) - t(k) &= (2^{n-1} - 1)2 - 1\\
    &= 2^n - 1 = \sum_{i=1}^n2^{n-i}
\end{align*}
which is the final number in $G$.
\end{proof}

\begin{exmp}
Let $t(k)$ be as in Corollary 2.3 and set $h(k) = k$ and $n=2$. This simply gives the partitioning of the positive integers into even and odd numbers. For $n=3$ it gives
$$D_1 = \{4k+4\}, $$
$$D_2 = \{4k+2\}, $$
$$D_3 = \{2k+1\}, $$
where $k = 0,1,2,\ldots$
\end{exmp}

\begin{remark}
Our partitioning into $n$ sets is of interest in part because of Uspensky's Theorem , which states that the sequences $\{\lfloor kx_1 \rfloor \}, \{\lfloor kx_2 \rfloor \}, \ldots, \{\lfloor kx_n \rfloor \}$ where $x_1, x_2, \ldots x_n$ are irrational numbers satisfying
$$\frac{1}{x_1} + \frac{1}{x_2} + \ldots + \frac{1}{x_n} = 1$$
give a partitioning of the positive integers only when $n=2$.
\end{remark}

\begin{corollary}
Let $1 \leq \alpha < 2$ and let $n\in\mathbb{N}$ with $n\geq 2$. Put $t(k) = (2^{n-1}-1)\lfloor k\alpha \rfloor + k$ for $k\in\mathbb{N}$. Then the sets $D_1,D_2,\ldots D_n$ as defined in Theorem 2.2 with $D_1 = \{t(k)\}_{k=1}^\infty$ partition the positive integers.
\end{corollary}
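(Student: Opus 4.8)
The plan is to obtain this as an immediate specialization of Corollary 2.3, by choosing the auxiliary sequence $h$ to be the Beatty sequence of $\alpha$. Concretely, I would set $h(k) = \lfloor k\alpha \rfloor$. This sequence is integer-valued, and since $\alpha \geq 1$ it is non-decreasing (in fact strictly increasing). Thus the only thing to verify before invoking Corollary 2.3 is that $h$ satisfies its two remaining hypotheses: $h(1) = 1$ and $1 \leq h(k+1) - h(k) \leq 2$ for every $k \in \mathbb{N}$.

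The normalization $h(1) = 1$ is immediate from $1 \leq \alpha < 2$, which gives $h(1) = \lfloor \alpha \rfloor = 1$. For the step bound, observe that $(k+1)\alpha - k\alpha = \alpha$, and apply the elementary floor inequality $\lfloor x \rfloor + \lfloor y \rfloor \leq \lfloor x+y \rfloor \leq \lfloor x \rfloor + \lfloor y \rfloor + 1$ with $x = k\alpha$ and $y = \alpha$. Since $\lfloor \alpha \rfloor = 1$, this yields $\lfloor k\alpha \rfloor + 1 \leq \lfloor (k+1)\alpha \rfloor \leq \lfloor k\alpha \rfloor + 2$, i.e. $h(k+1) - h(k) \in \{1,2\}$, exactly as required. (The degenerate case $\alpha = 1$ simply makes every difference equal to $1$ and causes no difficulty.)

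With both hypotheses of Corollary 2.3 confirmed for $h(k) = \lfloor k\alpha \rfloor$, that corollary applies directly: the sequence $l(k) = t(k) = (2^{n-1}-1)\lfloor k\alpha \rfloor + k$ satisfies $t(1) = 2^{n-1}$ and $t(k+1) - t(k) = (2^{n-1}-1)(h(k+1)-h(k)) + 1$, which equals $2^{n-1}$ (the first element of $G$) when the step is $1$ and $2^n - 1 = \sum_{i=1}^{n} 2^{n-i}$ (the last element of $G$) when the step is $2$. Hence $D_1 = \{t(k)\}_{k=1}^\infty$ together with $D_2, \ldots, D_n$ as built in Theorem 2.2 partition the positive integers.

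There is essentially no obstacle here: the statement is a direct corollary of Corollary 2.3, and the sole point requiring even a line of argument is the floor-difference estimate $1 \leq \lfloor (k+1)\alpha \rfloor - \lfloor k\alpha \rfloor \leq 2$; everything else is substitution and bookkeeping. One could alternatively bypass Corollary 2.3 and verify the hypotheses of Theorem 2.2 for $t$ directly, but routing through Corollary 2.3 keeps the argument shortest.
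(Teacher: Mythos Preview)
Your proposal is correct and follows essentially the same approach as the paper: both set $h(k)=\lfloor k\alpha\rfloor$, verify $h(1)=1$ and the step bound $h(k+1)-h(k)\in\{1,2\}$, and then invoke Corollary~2.3. The only cosmetic difference is that you obtain the step bound via the inequality $\lfloor x\rfloor+\lfloor y\rfloor\le\lfloor x+y\rfloor\le\lfloor x\rfloor+\lfloor y\rfloor+1$, whereas the paper writes the difference as $\alpha+\{k\alpha\}-\{(k+1)\alpha\}$ and bounds it from above; if anything, your version is slightly tidier since it handles both the lower and upper bounds in one stroke.
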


\begin{proof}
Clearly $t(1) = 2^{n-1}$. Since
\begin{align*}
    \lfloor (k+1)\alpha \rfloor - \lfloor k\alpha \rfloor &= (k+1)\alpha - \{(k+1)\alpha \} - (k\alpha - \{k\alpha \})\\
    &= \alpha + \{k\alpha \} - \{(k+1)\alpha \}\\
    &< 2 + \beta
\end{align*}
where $\beta < 1$, so the difference is strictly less than $3$. Since the difference is an integer, it must be at most 2, and the result follows from Corollary 2.3.
\end{proof}

\begin{exmp}
Let $h(k) = \lfloor k\phi \rfloor$, where $\phi$ is the golden ratio, and $n=2$. Then $D_1 = \{\lfloor (2^1-1)\lfloor k\phi \rfloor + k\} = \{\lfloor k(\phi+1) \rfloor\}_{k=1}^\infty = \{\lfloor k\phi^2 \rfloor\}_{k=1}^\infty$ because $\phi^2 = \phi + 1$. This implies $D_2 = \{ \lfloor k\phi \rfloor\}_{k=1}^\infty$, so $D_1$ and $D_2$ form the $\frac{1}{\phi} + \frac{1}{\phi^2}$ Beatty partition.
\end{exmp}

\begin{exmp}
Let $h(k) = \lfloor k\phi \rfloor $ and $n=3$. Then we have a possibly new partitioning given by
\begin{align*}
    D_1 &= \{3a(k) + k\} = \{ 4, 11, 15, 22, 29, 33, \ldots \}\\
    D_2 &= \{a(k) + 2k - 1\} = \{ 2, 6, 9, 13, 17, 20, 24, 27, \ldots \}\\
    D_3 &= \{\mathbb{N}/(D_1 \cup D_2)\} = \{ 1, 3, 5, 7, 8, 10, 12, 14, 16, \ldots \}
\end{align*}
The set $D_2$ does occur in OEIS as sequence A054770. It is of interest in connection with properties of Lucas numbers. This is the aforementioned $n=3$ case ``extension'' of the $A,B$ partition to be studied in later sections.
\end{exmp}

\begin{exmp}
Let $h(k) = \lfloor k\sqrt{2} \rfloor $ and $n=2$. This gives 
\begin{align*}
D_1 &= \{2, 4, 7, 9, 12, \ldots \},\\
D_2 &= \{1, 3, 5, 6, 8, 10, 11, \ldots \}.
\end{align*}
Unlike in the golden ratio case, this is not the Beatty $\frac{1}{\sqrt{2}} + \frac{1}{2+\sqrt{2}} = 1$ partition which consists of the sets
\begin{align*}
S_1 &= \{3, 6, 10, 13, 17, \ldots \}, \\
S_2 &= \{1, 2, 4, 5, 7, 8, 9, 11, \ldots \}.
\end{align*}
\end{exmp}

\subsection{Proof of Theorem 2.2}

The proof will be split into two parts. First, we show that the sets $D_1, D_2, \ldots, D_n$ contain every positive integer. Second, we show that these sets are pairwise disjoint, i.e., $D_i \cap D_j = \varnothing,\ i \not= j$. We begin by developing a series of lemmas from which the first part of the proof will follow easily.

\vspace{\baselineskip}

\begin{lemma}

Suppose that $a_i, b_i, c, n, N,$ and $m$ are integers with $n \geq c \geq 0, N \geq n+m, \text{ and } m \geq 1.$ If the $b_i$ are odd, then
$$\sum_{i=c}^{n} a_i\cdot 2^{N-i} \not= \sum_{i=c}^{n+m} b_i \cdot 2^{N-i}.$$

\end{lemma}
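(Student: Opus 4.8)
The plan is a $2$-adic valuation argument: I would show that the two sides have different $2$-adic valuations, where $\nu_2(x)$ denotes the exponent of the largest power of $2$ dividing a nonzero integer $x$. The hypotheses are exactly what is needed to make the exponents behave: since $c \le n < n+m \le N$, every exponent $N-i$ occurring on either side is a non-negative integer, the smallest one on the right being $N-(n+m)\ge 0$.

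First I would analyze the right-hand side. Factoring out the smallest power of $2$,
\[
\sum_{i=c}^{n+m} b_i\, 2^{N-i} \;=\; 2^{\,N-n-m}\,R, \qquad R := \sum_{i=c}^{n+m} b_i\, 2^{\,n+m-i}.
\]
The key observation is that every term of $R$ with $i<n+m$ carries a factor $2^{\,n+m-i}$ with $n+m-i\ge 1$, hence is even, while the single term with $i=n+m$ equals $b_{n+m}$, which is odd by hypothesis. So $R$ is odd; in particular $R\neq 0$, and the right-hand side is a nonzero integer with $\nu_2\big(\sum_{i=c}^{n+m} b_i 2^{N-i}\big) = N-n-m$.

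Next I would handle the left-hand side, which is cruder: factoring out $2^{N-n}$ gives
\[
\sum_{i=c}^{n} a_i\, 2^{N-i} \;=\; 2^{\,N-n}\,L, \qquad L := \sum_{i=c}^{n} a_i\, 2^{\,n-i}\in\mathbb{Z},
\]
so the left-hand side is either $0$ or an integer divisible by $2^{N-n}$. To finish, suppose the two sides were equal: since the right-hand side is nonzero the left-hand side is nonzero too, hence $2^{N-n}$ divides it, and therefore would divide the right-hand side as well — contradicting that the right-hand side has $2$-adic valuation exactly $N-n-m$, which is strictly smaller than $N-n$ because $m\ge 1$. Hence the two expressions are unequal.

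There is no real obstacle here; the argument is essentially forced once one passes to the $2$-adic setting. The only points that need a little care — and the closest thing to a subtlety — are the bookkeeping that $n\ge c\ge 0$ and $N\ge n+m$ keep all the relevant exponents non-negative, and separating off the degenerate case in which the left-hand sum vanishes (which is handled for free, since a vanishing sum cannot equal the manifestly nonzero right-hand side).
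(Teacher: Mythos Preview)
Your proof is correct and is essentially the same argument as the paper's. The paper divides both sides by $2^{N-n-m}$ and observes that the left side becomes an even integer while the right side becomes odd (its last term being $b_{n+m}$); you phrase this as a comparison of $2$-adic valuations, but the underlying parity observation is identical.
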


\begin{proof}
Suppose equality holds. Then
$$\sum_{i=c}^{n} a_i\cdot 2^{n+m-i} = \sum_{i=c}^{n+m} b_i \cdot 2^{n+m-i},$$
and all terms in these sums are integers. Since $m \geq 1$, all terms on the left side of the equation are even. On the other hand, all terms on the right side of the equation are even except for the last term which equals $b_{n+m}$ and is odd. This is a contradiction.
\end{proof}

\begin{definition}
Let $t$ be an integer and let the $(n-1)$-vector $E$ be defined by
$$E = E_{n-1} = \{\epsilon _0, \epsilon _1, \ldots , \epsilon _{n-2}\}$$ 
where we make the restriction $\epsilon_{j}\in\{-1,1\}$ for the remainder of the paper. Define $n$ inhomogeneous linear forms $L_0 = L_{0}(t,E), \ldots, L_{n-1} = L_{n-1}(t,E)$ in the variables $\{t; \epsilon _0, \epsilon _1, \ldots , \epsilon _{n-2} \} $ by
\begin{align*}
     L_0(t,E) &= t,\\
L_1(t,E) &= t + \epsilon _{n-2} \cdot 2^{n-2},\\
L_2(t,E) &= t + \epsilon _{n-2} \cdot 2^{n-2} + \epsilon _{n-3} \cdot 2^{n-3},\\
\vdots&\\
L_j(t,E) &= t + \epsilon _{n-2} \cdot 2^{n-2} + \ldots + \epsilon _{n-j-1} \cdot 2^{n-j-1},\\
\vdots&\\
L_{n-1}(t,E) &= t + \epsilon _{n-2} \cdot 2^{n-2} + \ldots + \epsilon _{1} \cdot 2 + \epsilon _0.
\end{align*}
Thus, for $j\geq 1$,
$$L_j(t,E) = t + \sum_{s=2}^{j+1} \epsilon_{n-s} \cdot 2^{n-s}.$$
\end{definition}

\begin{lemma}
Fix $t\in\mathbb{N}$. As $E$ varies over all of its $2^{n-1}$ possible values, the above collection of linear forms $\{L_0, L_1, \ldots , L_{n-1} \}$ takes on each value in the closed interval of integers
$$I(t) = [ t-(2^{n-1}-1), t+(2^{n-1}-1) ]$$
and no other values. Moreover, each integer in $I(t)$ is the value of exactly one of the linear forms as $E$ varies over all $2^{n-1}$ possible values.
\end{lemma}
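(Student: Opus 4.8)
The plan is to pin down, for each $j$, the set $R_j$ of values that $L_j(t,\cdot)$ takes as $E$ runs over its $2^{n-1}$ possibilities, to show that the $R_j$ are pairwise disjoint subsets of $I(t)$, and then to finish with a cardinality count forcing $R_0\cup\cdots\cup R_{n-1}=I(t)$. (Here ``exactly one of the linear forms'' is read as ``exactly one index $j$'': note that $L_0\equiv t$ attains the value $t$ for \emph{every} $E$, so counting $(\text{form},E)$ pairs would be the wrong reading.)

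First, using the closed form $L_j(t,E)=t+\sum_{s=2}^{j+1}\epsilon_{n-s}\cdot 2^{n-s}$ for $j\geq 1$ (and $L_0\equiv t$), I would factor out the smallest power of two occurring, namely $2^{n-j-1}$, to get
\[
L_j(t,E)-t = 2^{\,n-j-1}\bigl(\epsilon_{n-2}2^{\,j-1}+\epsilon_{n-3}2^{\,j-2}+\cdots+\epsilon_{n-j-1}2^{0}\bigr).
\]
The bracketed quantity is a signed sum of $2^{0},\dots,2^{j-1}$; writing it as $2\sum_{i\in S}2^{i}-(2^{j}-1)$, where $S$ indexes the plus signs, and invoking uniqueness of the binary expansions of $0,1,\dots,2^{j}-1$, I get that it runs over exactly the odd integers in $[-(2^{j}-1),\,2^{j}-1]$, each once. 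Hence, for $1\leq j\leq n-1$,
\[
R_j = \bigl\{\,t+2^{\,n-j-1}u : u\text{ odd},\ |u|\leq 2^{j}-1\,\bigr\},
\]
a set of $2^{j}$ integers, while $R_0=\{t\}$. Since $|2^{\,n-j-1}u|\leq 2^{\,n-j-1}(2^{j}-1)=2^{n-1}-2^{\,n-j-1}\leq 2^{n-1}-1$, each $R_j\subseteq I(t)$, which already settles the ``no other values'' half of the lemma.

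Next I would identify $R_j$ intrinsically: the nonzero members of $R_j-t$ are precisely the integers $w$ with $|w|\leq 2^{n-1}-1$ and $2$-adic valuation $v_2(w)=n-j-1$. Indeed such a $w$ equals $2^{\,n-j-1}u$ with $u$ odd and $|u|=|w|/2^{\,n-j-1}<2^{n-1}/2^{\,n-j-1}=2^{j}$, hence $|u|\leq 2^{j}-1$, so $w\in R_j-t$; the reverse inclusion is clear. Now $j\mapsto n-j-1$ is a bijection of $\{1,\dots,n-1\}$ onto $\{0,1,\dots,n-2\}$, while every integer $w$ with $0<|w|\leq 2^{n-1}-1$ has $v_2(w)\in\{0,\dots,n-2\}$ (valuation $\geq n-1$ would force $|w|\geq 2^{n-1}$). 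Therefore each such $w$ lies in exactly one $R_j$ with $j\geq 1$, and $w=0$ lies only in $R_0$. Translating back by $t$: each integer of $I(t)$ is a value of exactly one of $L_0,\dots,L_{n-1}$, and collectively they exhaust $I(t)$. As a sanity check, the $R_j$ being disjoint subsets of $I(t)$ with $|R_0|+\sum_{j=1}^{n-1}|R_j| = 1+\sum_{j=1}^{n-1}2^{j} = 2^{n}-1 = \#I(t)$ is already enough to force the union to be all of $I(t)$.

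I do not anticipate a genuine obstacle: once the forms are factored the argument is a short exercise in $2$-adic valuations and counting. The only points needing care are bookkeeping — tracking which $\epsilon_i$ enter $L_j$ and that the least exponent present is $n-j-1$ — together with the clean justification (via the binary-expansion bijection above, rather than an ad hoc induction) that the signed sums $\sum_{i=0}^{j-1}\pm 2^{i}$ realize \emph{every} odd value in the symmetric range $[-(2^{j}-1),\,2^{j}-1]$.
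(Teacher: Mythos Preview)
Your proof is correct, and it takes a genuinely different route from the paper's. The paper proceeds in two separate steps: it invokes Lemma~2.10 (a parity observation about sums $\sum a_i 2^{N-i}$ versus $\sum b_i 2^{N-i}$ with odd $b_i$) to conclude that $L_i(t,E_1)\neq L_j(t,E_2)$ whenever $i\neq j$, and then it runs a short induction/cancellation argument to show that each $L_j(t,\cdot)$ is injective in the relevant $\epsilon$'s. With those two pieces of distinctness in hand, the paper finishes by pure counting: the forms take $1+2+\cdots+2^{n-1}=2^n-1$ distinct values, the extremes are $t\pm(2^{n-1}-1)$, and $|I(t)|=2^n-1$, so the values must fill $I(t)$.

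Your argument replaces both distinctness steps with a single structural observation: after factoring, $L_j(t,E)-t$ is $2^{n-j-1}$ times an odd integer, so its $2$-adic valuation is \emph{exactly} $n-j-1$. Disjointness of the $R_j$ is then immediate (different $j$'s give different valuations), and the covering of $I(t)\setminus\{t\}$ follows because every nonzero $w$ with $|w|\le 2^{n-1}-1$ has $v_2(w)\in\{0,\dots,n-2\}$. The binary-expansion bijection you use to show that $\sum_{i=0}^{j-1}\pm 2^i$ hits every odd value in $[-(2^j-1),2^j-1]$ is cleaner than the paper's inductive injectivity check. In effect, your $2$-adic framing unifies what the paper splits into Lemma~2.10 plus an injectivity argument plus a counting step; the paper's approach is slightly more hands-on but reaches the same conclusion by the same cardinality pinch at the end.
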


\begin{proof}

The linear forms $\{L_0, L_1, \ldots , L_{n-1} \} $ assume respectively 
$$1, 2, 2^2, \ldots , 2^{n-1}$$
possible values, hence $\sum_{i=0}^{n-1} 2^i = 2^n -1$ possible values in all. Let $i,j$ be distinct integers between 0 and $n-1$. By Lemma 2.10, we have that $L_i(t,E_1)\not=L_j(t,E_2)$ for any $E_1,E_2$. In addition, we claim that for any $j$, $L_j(t,E_1)\not=L_j(t,E_2)$. That is, for fixed $t$, no two values taken on by $L_j(t)$ as it varies over $E$ are the same. For proof, suppose two such values are equal. Then there exist a collection $\{\epsilon _0, \epsilon _1, \ldots , \epsilon _{n-j-1}\}$ and a distinct collection $\{\delta _0, \delta _1, \ldots , \delta _{n-j-1}\}$ each taking values in $\{-1,1\}$ such that 
$$t + \sum_{s=2}^{j+1} \epsilon _{n-s} \cdot 2^{n-s} = t + \sum_{s=2}^{j+1} \delta _{n-s} \cdot 2^{n-s}.$$
Thus
$$\sum_{s=2}^{j+1} (\epsilon _{n-s}-\delta _{n-s}) \cdot 2^{n-s} = 0.$$
Since $\epsilon _{n-s}-\delta _{n-s}$ must be -2, 0, or 2, we can divide the equation by $2^{n-j}$ to get
$$\sum_{s=2}^{j+1} \frac{(\epsilon _{n-s}-\delta _{n-s})}{2} \cdot 2^{j+1-s} = 0.$$
If $\epsilon _{n-j-1}-\delta _{n-j-1}\not=0$, then all the terms of the equation are even except for the last term, which equals 1 or -1 and is odd. This is a contradiction. So $\epsilon _{n-j-1}-\delta _{n-j-1}$ must be 0. Now we have
$$\sum_{s=2}^{j} (\epsilon _{n-s}-\delta _{n-s}) \cdot 2^{n-s} = 0.$$
Repeating this process, we obtain $\epsilon _{n-s}-\delta _{n-s} = 0$ for all $s \in \{2, 3, \ldots , j+1 \}$. So the collections $\{\epsilon _0, \epsilon _1, \ldots , \epsilon _{n-j-1}\}$ and $\{\delta _0, \delta _1, \ldots , \delta _{n-j-1}\}$ are exactly the same. Therefore, for fixed $t\in\mathbb{N}$, the form $L_j(t)$ is injective for any $j \in \mathbb{N}$. Hence the linear forms $\{L_0, L_1, \ldots , L_{n-1} \} $ assume all possible
$$(2^{n-1}-1) + 1 + (2^{n-1}-1) = 2^n-1$$
values between their minimum and maximum values. These values are
$$t \pm \sum_{i=0}^{n-2} 2^i = t \pm (2^{n-1}-1),$$
and the result follows.
\end{proof}

\begin{remark}

We shall next allow $t$ to vary. But it will be useful to bear in mind that if at any point we fix $t$, distinct forms will take on distinct values.

\end{remark}

\begin{lemma}

Let $l$ be a sequence satisfying the hypotheses of Theorem 2.2 and set 
\begin{equation*}
    I(l(k)) = [l(k)-(2^{n-1}-1), l(k)+(2^{n-1}-1)]. 
\end{equation*}
Then $\bigcup \limits_{k=1}^{\infty} I(l(k))$ is the set of all positive integers.

\end{lemma}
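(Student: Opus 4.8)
The plan is to prove by induction on $k$ the statement $P(k)$: the finite union $\bigcup_{j=1}^{k} I(l(j))$ is exactly the interval of integers $[\,1,\; l(k)+(2^{n-1}-1)\,]$; letting $k\to\infty$ then finishes the lemma. Before starting the induction I would record two elementary facts about $G$: its smallest element is $2^{n-1}\ge 2$ and its largest element is $\sum_{i=1}^{n}2^{n-i}=2^{n}-1$. The first forces $l$ to be strictly increasing, hence $l(k)\to\infty$; the second is exactly what will make consecutive intervals meet without leaving a gap.

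For the base case, since $l(1)=2^{n-1}$ we have $I(l(1))=[\,1,\;2^{n}-1\,]=[\,1,\;l(1)+(2^{n-1}-1)\,]$, which is $P(1)$. For the inductive step, assume $P(k)$. The interval $I(l(k+1))$ has left endpoint $l(k+1)-(2^{n-1}-1)$, and because $l(k+1)-l(k)\in G$ with $\max G=2^{n}-1$ one gets
\[
l(k+1)-(2^{n-1}-1)\;\le\; l(k)+(2^{n-1}-1)+1 .
\]
Thus the integer interval $I(l(k+1))$ either overlaps $[\,1,\;l(k)+(2^{n-1}-1)\,]$ or is immediately adjacent to its right end; since also $l(k+1)\ge l(k)$, the union of the two integer intervals is $[\,1,\;l(k+1)+(2^{n-1}-1)\,]$, which is $P(k+1)$.

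Finally, because $l(k)\to\infty$, every positive integer $N$ satisfies $N\le l(k)+(2^{n-1}-1)$ for all large $k$, so $N\in\bigcup_{k}I(l(k))$; conversely each $I(l(k))$ consists of positive integers. Hence the union is precisely $\mathbb{N}$. I do not expect a genuine obstacle here: the one point that must be checked carefully is the interaction between the two extreme values of $G$ and the common ``radius'' $2^{n-1}-1$ of the intervals $I(\cdot)$ — the maximal gap $2^{n}-1$ equals $2(2^{n-1}-1)+1$, which is exactly the threshold below which no hole can open between $I(l(k))$ and $I(l(k+1))$, while the minimal gap being positive is what guarantees $l(k)\to\infty$.
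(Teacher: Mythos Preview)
Your proof is correct and follows essentially the same approach as the paper: both arguments verify that $I(l(1))$ starts at $1$, that $l(k)\to\infty$, and that the inequality $l(k+1)-l(k)\le 2^{n}-1=\max G$ forces consecutive intervals $I(l(k))$ and $I(l(k+1))$ to overlap or abut. Your explicit induction on $k$ packaging the partial unions as $[\,1,\;l(k)+(2^{n-1}-1)\,]$ is slightly more formal than the paper's direct observation, but the content is the same.
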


\begin{proof}
By the hypotheses of Theorem 2.2, $l(k) \rightarrow \infty$ as $k \rightarrow \infty$, and $l(1)-(2^{n-1}-1) = 1$. Thus, we need to show that for $k\geq 2$, any given interval $I(l(k))$ either overlaps with $I(l(k-1))$ or begins at the first integer outside $I(l(k-1))$. This condition will be satisfied if
\begin{equation*}
    l(k+1)-(2^{n-1}-1) \leq l(k)+(2^{n-1}-1)+1.
\end{equation*}
Rearranging this inequality gives
\begin{equation*}
    l(k+1)-l(k) \leq 2^n - 1.
\end{equation*}
Since
\[
    \text{max}\ G = \sum_{i=1}^{n}2^{n-i} = 2^n - 1,
\]
this inequality is satisfied for all $k$.
\end{proof}

\begin{corollary}

Let $t$ be as in Corollary 2.3 and set $h(k) = \lfloor k\alpha \rfloor$ where $1 \leq \alpha < 2$. Then $\bigcup \limits_{k=1}^{\infty} I(t(k))$ is the set of all positive integers.

\end{corollary}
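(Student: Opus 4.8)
The plan is to reduce this statement directly to Lemma 2.15 by checking that the sequence $l = t$ built from $h(k) = \lfloor k\alpha \rfloor$ meets the hypotheses of Theorem 2.2. First I would verify the two structural requirements on $h$ imposed in Corollary 2.3, namely that $h(1) = 1$ and that $1 \le h(k+1) - h(k) \le 2$ for all $k$. The first holds because $1 \le \alpha < 2$ forces $\lfloor \alpha \rfloor = 1$. For the second, the upper bound $h(k+1) - h(k) \le 2$ is precisely the content of the computation already carried out in the proof of Corollary 2.6 (the difference is a positive integer and is strictly less than $3$), while the lower bound $h(k+1) - h(k) \ge 1$ follows from $\alpha \ge 1$, since $\lfloor (k+1)\alpha \rfloor \ge \lfloor k\alpha + 1 \rfloor = \lfloor k\alpha \rfloor + 1$.

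Once $h$ is known to satisfy those hypotheses, the computation in the proof of Corollary 2.3 gives $t(1) = 2^{n-1}$ and $t(k+1) - t(k) \in G$, so $l = t$ is an admissible sequence in the sense of Theorem 2.2. I would then invoke Lemma 2.15 with this particular $l$: that lemma asserts exactly that $\bigcup_{k=1}^{\infty} I(l(k))$ is the set of all positive integers, and since $I(t(k)) = I(l(k))$ by definition, this is the desired conclusion.

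I do not anticipate any genuine obstacle, as the statement is just a specialization of Lemma 2.15 to a concrete choice of sequence. The only point that warrants (minimal) care is the verification of the step-size bounds for $\lfloor k\alpha \rfloor$, and in particular noting that it is the hypothesis $\alpha \ge 1$ — rather than merely $\alpha > 0$ — that secures the lower bound $h(k+1) - h(k) \ge 1$, without which $t$ would fail to be admissible for Theorem 2.2.
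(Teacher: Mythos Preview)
Your proposal is correct and is exactly the argument the paper has in mind: the corollary is stated without proof precisely because it is an immediate specialization of Lemma 2.14 once one knows (via Corollary 2.3 or, equivalently, Corollary 2.6) that this particular $t$ satisfies the hypotheses of Theorem 2.2. One small correction: the lemma you are invoking is numbered 2.14 in the paper, not 2.15 (the latter is the corollary you are proving); your added remark that the lower bound $h(k+1)-h(k)\ge 1$ genuinely uses $\alpha \ge 1$ is a nice point the paper glosses over.
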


\begin{prop}
Let the sequence $l$ and the sets $D_j, 1 \leq j \leq n,$ be as in Theorem 2.2 with $D_1 = \{l(k)\}_{k=1}^{\infty}$. Then $\bigcup \limits_{j=1}^{n} D_j$ is the set of all positive integers.
\end{prop}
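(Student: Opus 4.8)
The plan is to identify each set $D_j$ with the image of the linear form $L_{j-1}$ from Definition 2.9 and then chain together Lemma 2.11 and Lemma 2.15. First I would unwind Definition 2.1: for $j \geq 1$,
\[
    D_{j+1} = D_1 \pm 2^{n-2} \pm 2^{n-3} \pm \cdots \pm 2^{n-j-1} = \bigl\{\, l(k) + \epsilon_{n-2}2^{n-2} + \cdots + \epsilon_{n-j-1}2^{n-j-1} : k \geq 1,\ \epsilon_i \in \{-1,1\} \,\bigr\}.
\]
Comparing this with the closed form $L_j(t,E) = t + \sum_{s=2}^{j+1}\epsilon_{n-s}2^{n-s}$, we see that $D_{j+1}$ is exactly $\{\, L_j(l(k),E) : k \geq 1,\ E \in \{-1,1\}^{n-1} \,\}$. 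The only point to check here is that $L_j$ depends only on the coordinates $\epsilon_{n-2}, \ldots, \epsilon_{n-j-1}$ of $E$, so letting the remaining coordinates of $E$ range freely does not enlarge or shrink the image; each relevant sign pattern is simply realized several times. Likewise $D_1 = \{l(k)\} = \{L_0(l(k),E)\}$. Since $D_i$ corresponds to $L_{i-1}$ for $i = 1, \ldots, n$, this gives
\[
    \bigcup_{j=1}^{n} D_j = \bigcup_{k=1}^{\infty} \bigcup_{j=0}^{n-1} \bigl\{\, L_j(l(k),E) : E \in \{-1,1\}^{n-1} \,\bigr\}.
\]

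Next I would fix $k$ and apply Lemma 2.11 with $t = l(k)$: as $E$ ranges over all $2^{n-1}$ sign vectors, the collection $\{L_0, L_1, \ldots, L_{n-1}\}$ takes on exactly the integers of $I(l(k)) = [\,l(k) - (2^{n-1}-1),\, l(k) + (2^{n-1}-1)\,]$ and no others. Hence the inner double union over $j$ and $E$ is precisely $I(l(k))$, and the display above simplifies to $\bigcup_{j=1}^{n} D_j = \bigcup_{k=1}^{\infty} I(l(k))$. Finally, Lemma 2.15 asserts that this last union is the set of all positive integers — this is where the hypotheses $l(1) = 2^{n-1}$ and $l(k+1) - l(k) \in G$ (hence $\leq 2^n - 1$) are consumed. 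Combining the two identities proves the proposition.

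I expect the only real friction to be the opening, purely bookkeeping step: verifying that the iterated $\pm$ construction of Theorem 2.2 produces exactly the set of values of the form $L_j$, together with the remark that padding $E$ with the unused signs $\epsilon_0, \ldots, \epsilon_{n-j-2}$ changes nothing. Once that correspondence is in place, the rest is a direct appeal to the two lemmas already established, with no further computation required.
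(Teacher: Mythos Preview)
Your argument is correct and matches the paper's proof essentially step for step: identify $D_{j}$ with the values of $L_{j-1}(l(k),E)$, collapse the inner union over $j$ and $E$ to $I(l(k))$ via the interval lemma, and then cover $\mathbb{N}$ via the union-of-intervals lemma. Your only slip is clerical: in the paper's numbering the linear-form definition is Definition~2.11, the interval lemma is Lemma~2.12, and the covering lemma is Lemma~2.14 (your ``2.9'', ``2.11'', ``2.15'' are off by a couple of shared-counter items).
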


\begin{proof}
First, fix $k\in \mathbb{N}$ and let the set of all values of the form $L_{j-1}(l(k),E)$ as $E$ varies varies over all possible choices for $\epsilon_0,\epsilon_1,\ldots,\epsilon_{n-2}$ be denoted by
\begin{equation*}
    \bigcup\limits_{E}\{L_{j-1}(l(k),E)\}.
\end{equation*}
Each $D_j$ is defined such that it is the set of all values taken on by $L_{j-1}(l(k),E)$ as $k$ varies over all positive integers and $E$ varies as above.
With this characterization of $D_1,D_2,\ldots D_n$, we can write
\begin{equation*}
    \bigcup \limits_{j=1}^{n} D_j = \bigcup \limits_{j=1}^{n}\bigcup\limits_{k=1}^\infty \left(\bigcup\limits_{E} \{L_{j-1}(l(k),E)\}\right) = \bigcup\limits_{k=1}^\infty \bigcup\limits_{j=1}^{n}\left(\bigcup\limits_{E} \{L_{j-1}(l(k),E)\}\right) = \bigcup \limits_{k=1}^{\infty} I(l(k)) = \mathbb{N}
\end{equation*}
where the second to last equality follows from Lemma 2.12 and the last equality follows from Lemma 2.14.
\end{proof}

Proposition 2.16 shows that the desired collection of sets contains every positive integer. We now need to show that no integer belongs to two of the sets simultaneously. In other words, we need to show that for any sequence $l$ satisfying the hypotheses of Theorem 2.2 and positive integers $i\not=j$
$$L_{i}(l(k_1),E_1)\not=L_{j}(l(k_2),E_2)$$
for any $k_1,k_2,E_1,E_2$. If $l(k_1) = l(k_2),$ then by Lemma 2.12, the forms must be the same, so $i = j$. On the other hand, if $l(k_1)$ and $l(k_2)$ are sufficiently far apart (note that $n$ is fixed), $L_i(l(k_1),E_1)$ and $L_j(l(k_2),E_2)$ cannot be equal. Thus a more careful analysis will be needed when $l(k_1)$ and $l(k_2)$ are unequal but close.

\begin{lemma}
$\max I(l(k)) < \min I(l(k+2))$.
\end{lemma}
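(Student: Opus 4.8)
The plan is to reduce the claimed inequality to a statement purely about the gaps $l(k+1)-l(k)$ and then invoke the hypothesis that these gaps lie in $G$. First I would unwind the definitions: since $I(l(m)) = [\,l(m)-(2^{n-1}-1),\ l(m)+(2^{n-1}-1)\,]$, we have $\max I(l(k)) = l(k)+(2^{n-1}-1)$ and $\min I(l(k+2)) = l(k+2)-(2^{n-1}-1)$. Hence the desired inequality $\max I(l(k)) < \min I(l(k+2))$ is equivalent to
\[
    l(k+2) - l(k) > 2\bigl(2^{n-1}-1\bigr) = 2^n - 2.
\]

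Next I would estimate the left-hand side from below using the telescoping identity $l(k+2)-l(k) = \bigl(l(k+2)-l(k+1)\bigr) + \bigl(l(k+1)-l(k)\bigr)$. By the hypotheses of Theorem 2.2, each of these two consecutive differences lies in $G = \{2^{n-1}, 2^{n-1}+2^{n-2}, \ldots, \sum_{i=1}^{n}2^{n-i}\}$, and in particular each is at least $\min G = 2^{n-1}$. Therefore $l(k+2)-l(k) \geq 2^{n-1} + 2^{n-1} = 2^n$, which is strictly greater than $2^n - 2$. This establishes the equivalent inequality above, and hence the lemma.

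I do not anticipate a genuine obstacle here; the only point requiring a moment's care is the strictness of the inequality, but this is automatic since $2^n > 2^n - 2$ with room to spare (indeed the two intervals $I(l(k))$ and $I(l(k+2))$ are separated by a gap of at least one integer). This lemma is the promised refinement needed for the disjointness half of Theorem 2.2: it shows that forms attached to indices $k_1, k_2$ with $|k_1 - k_2| \geq 2$ have disjoint ranges, so that the remaining analysis can be confined to the case $|k_1-k_2| \leq 1$, and the case $k_1 = k_2$ is already handled by Lemma 2.12.
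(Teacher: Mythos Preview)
Your proof is correct and follows essentially the same approach as the paper: reduce the inequality to $l(k+2)-l(k) > 2^n-2$, then telescope and use $\min G = 2^{n-1}$ to get $l(k+2)-l(k) \geq 2^n > 2^n-2$. The additional contextual remarks about how the lemma is used are accurate as well.
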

\begin{proof}
We need to show that
$$l(k+2)-(2^{n-1}-1) > l(k)+(2^{n-1}-1),$$
i.e., that
$$l(k+2) - l(k) > 2^n-2.$$
Since min $G = 2^{n-1}$, we have
\begin{equation*}
    l(k+2)-l(k) = l(k+2) - l(k+1) + l(k+1) - l(k) \geq 2 \cdot 2^{n-1} = 2^n > 2^n-2.
\end{equation*}
\end{proof}

We are now ready to finish the proof of Theorem 2.2. Since increasing $k$ makes the smallest element of $I(l(k))$ larger, it is now clear that
$$I(l(k)) \cap I(l(k+b)) = \emptyset$$
for all $b \geq 2$. Thus if $L_{i}(l(k_1),E_1)$ and $L_{j}(l(k_2),E_2)$ with $i \not= j$ and $k_2 \geq k_1$ take on the same value $m$, it must be the case that $k_2 = k_1+1$ and
$$m \in I(l(k_1)) \cap I(l(k_1+1)).$$
Of course, we are done if the above intersection is empty.

We first consider those $k$ for which $l(k+1) - l(k) = \max G = 2^n - 1$. In this case,
\begin{align*}
    I(l(k)) &= [l(k)-(2^{n-1}-1), l(k)+(2^{n-1}-1)],\\
    I(l(k+1)) &= [l(k)+2^n-1-(2^{n-1}-1), l(k)+2^n-1+(2^{n-1}-1)]\\ 
              &= [l(k)+2^{n-1}, l(k) + 3(2^n)-2].
\end{align*}

Thus the smallest integer in the interval $I(l(k+1))$ is at least $l(k) + 2^{n-1}$, which is larger than the largest integer in $I(l(k))$. Hence the intersection is empty.
\\We now consider those $k$ for which $l(k+1)-l(k) = \sum_{a=1}^{b}2^{n-a}$ for $1 \leq b \leq n-1$.
\\In this case, we can write the intersection $I(l(k)) \cap I(l(k+1))$ as
\begin{align*}
    I(l(k)) \cap I(l(k+1)) &= \left[l(k) - 2^{n-1} + 1, l(k) + 2^{n-1}-1\right] \cap \left[l(k) + \sum_{a=1}^{b}2^{n-a} - 2^{n-1} + 1, \ldots\right] \\
    &= \left[l(k) + \sum_{a=1}^{b}2^{n-a} - 2^{n-1} + 1,\ l(k) + 2^{n-1}-1\right]\\
    &= \left[l(k) + 1 + \sum_{a=2}^{b}2^{n-a},\ l(k) + 2^{n-1} - 1\right].
\end{align*}
Suppose that for some $E_1,E_2$ we have

$$m = L_{i}(l(k),E_1) \text{ and } m = L_{j}(l(k+1),E_2).$$
This means that $m$ has a representation in terms of both $L_i$ and $L_j$. From the fact that $m = L_i(l(k),E_1)$, we have

$$m = l(k) + \sum _{s=2}^{i+1} \epsilon_{n-s}\cdot 2^{n-s}.$$
But since $m\in I(l(k))\cap I(l(k+1))$, we must have $m > l(k) + \sum_{a=2}^b2^{n-a}$. We claim this forces $i + 1 > b$ as well as
\[
    \epsilon_{n-2} = \epsilon_{n-3} = \ldots = \epsilon_{n-b-1} = 1.
\]
For proof of this last fact, suppose that there is some non-empty subset $I$ of the integers 2 through $b+1$ such that for $d\in I$, $\epsilon_{n-d} = -1$ while for $e\not\in I$, we have $\epsilon_{n-e} = 1$. Using the lower bound for $m$, we write
\[
    m = l(k) + \sum_{s=2}^{i+1}\epsilon_{n-s}2^{n-s} > l(k) + \sum_{a=2}^b2^{n-a}.
\]
Canceling $l(k)$ and the common terms in the sums gives
\[
    -\sum_{d\in I}2^{n-d} + \sum_{s=b+1}^{i+1}\epsilon_{n-s}2^{n-s} > \sum_{d\in I}2^{n-d} \iff \sum_{d\in I}2^{n+1-d} < \sum_{s=b+1}^{i+1}\epsilon_{n-s}2^{n-s}.
\]
Now, choosing any $a\in I$ allows us to write
\[
    2^{n+1-a} < \sum_{d\in I}2^{n-d+1} < \sum_{s=b+1}^{i+1}\epsilon_{n-s}2^{n-s} < \sum_{s=b+1}^{n}2^{n-s} = 2^{n-b} - 1 = 2^{n+1-(b+1)}-1
\]
But since $a\in I$, $a \leq b+1$. This is a contradiction.
Therefore, we have shown that the $\epsilon_{n-i}$ with $n\leq n-b-1$ are 1 and so the $L_i$ representation of $m$ must have the form
\[
    m = l(k) + \sum_{a=2}^{b+1}2^{n-a} + \sum_{s=b+2}^{i+1}\epsilon_{n-s}2^{n-s}.
\]
Since $m = L_j(l(k+1),E_2)$ we can additionally write
\[m = l(k+1) + \sum_{s=2}^{j+1}\delta_{n-s}2^{n-s} = l(k) + \sum_{a=1}^b2^{n-a} + \sum_{s=2}^{j+1}\delta_{n-s}2^{n-s}\]
because we have assumed $l(k+1) = l(k) + \sum_{a=1}^b2^{n-a}$. Next, we equate these two representations and cancel common terms, resulting in
\begin{equation*}
    \sum_{s=b+2}^{i+1}\epsilon_{n-s}2^{n-s} + 2^{n-b-1} = 2^{n-1} + \sum_{s=2}^{j+1}\delta_{n-s}2^{n-s}.
\end{equation*}
Rearranging this identity, we get
\begin{align*}
    \sum_{s=b+2}^{i+1}\epsilon_{n-s}2^{n-s} &= 2^{n-1} - 2^{n-b-1} + \sum_{s=2}^{j+1}\delta_{n-s}2^{n-s} \\
    &= \left(\sum_{c=2}^n2^{n-c} + 1\right) - \left(\sum_{c=b+2}^n2^{n-c} + 1\right) + \sum_{s=2}^{j+1}\delta_{n-s}2^{n-s} \\
    &= \sum_{c=2}^{b+1}2^{n-c} + \sum_{s=2}^{j+1}\delta_{n-s}2^{n-s}
\end{align*} 
We now wish to show that $\delta_{n-2} = \delta_{n-3} = \ldots = \delta_{n-b-1} = -1$. Suppose that there is a nonempty subset $I$ of the integers 2 through $b+1$ such that for $d\in I$, $\delta_{n-d} = 1$ while $\delta_{n-s} = -1$ for $s\not\in I$. Then, continuing from the above equality, we have
\[
    \sum_{s=b+2}^{i+1}\epsilon_{n-s}2^{n-s} = \sum_{d\in I}2^{n-d} + \sum_{s\in I}2^{n-s} + \sum_{s=b+2}^{j+1}\delta_{n-s}2^{n-s}.
\]
Rearranging, we have
\[
    \sum_{d\in I}2^{n+1-d} = \sum_{s=b+2}^{i+1}\epsilon_{n-s}2^{n-s} - \sum_{s=b+2}^{j+1}\delta_{n-s}2^{n-s}.
\]
Consider the absolute values of both sides of the inequality. The absolute values of both sums on the right side are strictly less than $2^{n-b-1}$ while the sum on the left is greater than any individual member of the sum. So, choosing any $a\in I$ gives the following inequality
\[ 
    2^{n+1-a} = 2^{n-(a-1)} <  2^{n-b-1} + 2^{n-b-1} = 2^{n-b}, 
\]
but since $a\in I$ we have $a-1 \leq b,$ which gives a contradiction. Therefore, $\delta_{n-2} = \delta_{n-3} = \ldots = \delta_{n-b-1} = -1$. Using this fact and equating the two representations of $m$, we find that
\[
    \sum_{s=b+2}^{i+1}\epsilon_{n-s}2^{n-s} = \sum_{s=b+2}^{j+1}\delta_{n-s}2^{n-s}.
\]
But $i\not=j$, so by Lemma 2.10 this is a contradiction.
Hence $L_i$ and $L_j$ have no common value, and Theorem 2.2 follows. (In particular, when we are considering $t(k) = (2^{n-1}-1)\lfloor k\phi \rfloor + k$, this forms the general $n$-columns $\phi$-partition. The properties of this partition for the $n=3$ case are analyzed in Section 4.)

\vspace{\baselineskip}

\subsection{Limiting Behavior of the n-Set Partition}

We now examine the limiting behavior of our $n$-set partitions as $n \rightarrow \infty$.

\begin{theorem}
Let $(l_k)_{k\in\mathbb{N}}$ be a sequence of sequences such that each $l_j$ satisfies the hypothesis in Theorem 2.2 with $n = j$. Let ($\mathbfcal{D}_j)_{j\in\mathbb{N}}$ be a sequence of partitions of the kind considered in Theorem 1 where $\mathbfcal{D}_1 = \{D_1\} = \{\mathbb{N}\},\ \mathbfcal{D}_2 = \{D_{1,2}, D_{2,2}\},\ \mathbfcal{D}_3 = \{D_{1,3}, D_{2,3}, D_{3,3}\}, \ldots,\mathbfcal{D}_j = \{D_{1,j},D_{2,j},\ldots,D_{j,j}\},\ldots$ where we put $\{D_{1,j}\} = \{l_j(k)\}_{k=1}^\infty$. That is, the $n$-th partition in the sequence is a partition into $n$ parts of the kind in considered in Theorem 2.2. Additionally, let $M = \{1,3,5,7,9,\ldots\}$, the set of odd positive integers. Then we have the following pointwise convergence properties of the sets within each partition as $n\rightarrow\infty$:

$$D_{n,n} \rightarrow M$$
$$D_{n-1,n} \rightarrow 2 \cdot M$$
$$\vdots$$
$$D_{n-e,n} \rightarrow 2^e \cdot M$$
$$\vdots$$

\end{theorem}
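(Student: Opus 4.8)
The plan is to read the asserted convergence pointwise: for each fixed positive integer $m$ and each fixed $e\geq 0$, one must show that $m\in D_{n-e,n}$ for all sufficiently large $n$ when $m\in 2^e M$, and $m\notin D_{n-e,n}$ for all sufficiently large $n$ when $m\notin 2^e M$. Writing $e_0 = v_2(m)$ for the $2$-adic valuation of $m$ (so that $m\in 2^{e_0}M$ and $m\notin 2^eM$ for $e\neq e_0$), it therefore suffices to prove: for all large $n$, $m\in D_{n-e_0,n}$ and $m\notin D_{j,n}$ for every other index $j$. Equivalently, $\mathbf{1}_{D_{n-e,n}}(m)=\mathbf{1}_{2^eM}(m)$ for all large $n$, which is exactly pointwise convergence of the indicator functions.

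The first step is to localize $m$. As noted in the proof of Lemma 2.14, $I(l_n(1)) = [1,2^n-1]$ for every admissible sequence $l_n$, and $\min I(l_n(k)) = l_n(k)-(2^{n-1}-1)$ is strictly increasing in $k$ with $\min I(l_n(2)) = l_n(2)-2^{n-1}+1 \geq 2^{n-1}+1$, since $l_n(2)-l_n(1)\in G$ and $\min G = 2^{n-1}$. Hence, as soon as $2^{n-1}>m$, the integer $m$ lies in $I(l_n(1))$ and in no other covering interval. By Lemma 2.12 applied with $t=l_n(1)=2^{n-1}$, there is then a unique $j\in\{1,\ldots,n\}$ and a unique sign vector $E$ with $m=L_{j-1}(2^{n-1},E)$; and since each value $L_{j'-1}(l_n(k),E')$ lies in $I(l_n(k))$, this $j$ is the only index with $m\in D_{j,n}$. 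Because $2^{n-1}>m$ we cannot have $j=1$, so $m-2^{n-1}=\sum_{s=2}^{j}\epsilon_{n-s}2^{n-s}$ with each $\epsilon_{n-s}\in\{-1,1\}$.

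The second step identifies $j$ by matching $2$-adic valuations. Factoring out the lowest power $2^{n-j}$ from the signed dyadic sum, the quotient is $\epsilon_{n-j}$ plus an even integer, hence odd — the same parity observation used in Lemmas 2.10 and 2.12 — so $v_2(m-2^{n-1}) = n-j$. On the other hand, $2^{n-1}>m\geq 2^{e_0}$ forces $n-1>e_0$, whence $v_2(m-2^{n-1})=\min(v_2(m),n-1)=e_0$. Comparing the two gives $j=n-e_0$. Thus for every $n$ with $2^{n-1}>m$ we have $m\in D_{n-e_0,n}$ and $m\notin D_{j,n}$ for all $j\neq n-e_0$. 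Since $2^eM$ is precisely the set of positive integers of $2$-adic valuation $e$, this yields $\mathbf{1}_{D_{n-e,n}}(m)=\mathbf{1}_{2^eM}(m)$ for all $n$ with $2^{n-1}>m$, i.e.\ $D_{n-e,n}\to 2^eM$ pointwise for every $e\geq 0$.

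Given the structure theorem, the argument is short; the only points demanding care are the localization step — verifying that a fixed $m$ sits in just the first of the covering intervals once $n$ is large, so that the combinatorics of the forms at the single base point $t=2^{n-1}$ controls everything — and the one-line valuation computation showing $\sum_{s=2}^{j}\epsilon_{n-s}2^{n-s}$ has $2$-adic valuation exactly $n-j$. I do not anticipate a genuine obstacle beyond this bookkeeping.
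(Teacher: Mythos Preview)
Your proof is correct and follows essentially the same route as the paper: both arguments reduce to analyzing the forms $L_{j-1}(2^{n-1},E)$ at the base point $t=l_n(1)=2^{n-1}$ and observing that these values have $2$-adic valuation exactly $n-j$ (equivalently, that the quotient by $2^{n-j}$ is odd). The paper phrases this by directly computing that the values of $L_{j-1}(2^{n-1},E)$ are precisely $2^{n-j}\cdot\{1,3,5,\ldots,2^{j}-1\}$, while you phrase the same parity observation via $v_2$ and the identity $v_2(m-2^{n-1})=v_2(m)$ once $n-1>v_2(m)$; your localization step making explicit that $m$ lies only in $I(l_n(1))$ for large $n$ is a point the paper leaves somewhat implicit.
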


\begin{proof}

Let $n\in\mathbb{N}$. The first (smallest) element of $D_{1,n}$ is $2^{n-1}$, and it is easy to see that the first element of $D_{j,n}$ for any $1 \leq j \leq n$ is $2^{n-j}$. In particular, the first element of $D_{n,n}$ is simply 1. For further insight into the small values of the various $D_{j,n}$, we determine the range of the linear forms $L_{j}(2^{n-1},E)$, $0 \leq j \leq n-1$ as $E$ varies for any $\mathbfcal{D}_n$. For any particular $j$, $L_j(2^{n-1},E)$, can lie between a minimum of
\begin{align*}
2^{n-1}-2^{n-2}-2^{n-3}-\ldots - 2^{n-j-1}
&= 2^{n-1}-2^{n-j-1}(1+2+2^2+\ldots + 2^{j-1})\\
&= 2^{n-1}-2^{n-j-1}(2^j-1) = 2^{n-j-1}
\end{align*}
and a maximum of
\begin{align*}
2^{n-1}+2^{n-2}+2^{n-3}+\ldots + 2^{n-j-1}
&= 2^{n-1}+2^{n-j-1}(2^j-1)\\ 
&= 2^{n-j-1}(2^{j+1}-1).
\end{align*}
Dividing the value of the form by $2^{n-j-1}$ yields
\begin{equation*}
    L_j(2^{n-1},E)/2^{n-j-1} = (2^{n-1} + \sum_{s=2}^{j+1}\epsilon_{n-s}2^{n-s})/2^{n-j-1} = 2^j + \sum_{s=2}^{j+1}\epsilon_{n-s}2^{j+1-s}.
\end{equation*}
Here $L_{j}(2^{n-1},E)/2^{n-j-1}$ is odd, and can take on a total of $2^j$ values as $E$ varies. The minimum value is 1 and the maximum is $2^{j+1}-1$, a range in which there are exactly $\frac{1}{2}(2^{j+1}-1+1) = 2^j$ odd numbers. Therefore, $L_j(2^{n-1},E)/2^{n-j-1}$ takes on exactly the numbers 

\begin{equation*}
    \{1,3,5,7,\ldots,2^{j+1}-1\}
\end{equation*}
as $E$ varies over all possible values. Thus, since the first $2^{j-1}$ numbers in $D_{j,n}$ are exactly those possible values of $L_{j-1}(2^{n-1},E)$, the first $2^{j-1}$ numbers in $D_{j,n}$ are exactly
$$2^{n-j}(1,3,5,\ldots ,2^j-1).$$
Now set $j = n-e$. We see that the first $2^{n-e-1}$ numbers of each $D_{n-e,n}$ are exactly

$$2^{e}(1,3,5,\ldots , 2^{n-e}-1).$$
\end{proof}

\subsection{Consequences of Theorems 2.2 and 2.18 Regarding Beatty Sequences}

Beatty sequences and the theory surrounding them are a major motivation in establishing Theorems 2.2 and 2.18. Recall that given irrational $\alpha,\beta > 0$ which satisfy
\begin{equation*}
    \frac{1}{\alpha} + \frac{1}{\beta} = 1,
\end{equation*}
Beatty's Theorem states that the sets
\begin{equation*}
    \{\lfloor k\alpha \rfloor\}_{k=1}^\infty\quad \text{and} \quad \{\lfloor k\beta \rfloor\}_{k=1}^\infty 
\end{equation*}
partition the positive integers. Using this and Corollary 2.6, we can establish an interesting way of "extending" these partitions in the context of Theorem 1. If we let $\alpha$ be irrational such that $1<\alpha<2$, then applying Corollary 2.3 gives that the sequence
\begin{equation*}
    t(k) = \lfloor k\alpha \rfloor + k = \lfloor k(\alpha+1) \rfloor
\end{equation*}
satisfies the hypotheses of Theorem 2.2 and generates a 2-set partition. The sequence $t$ is a Beatty sequence with $2<\alpha+1<3$, so, by Beatty's Theorem, the set $D_2$ is composed of the elements of the complementary Beatty sequence $\lfloor k\frac{\alpha+1}{\alpha} \rfloor$. The given partition $\{D_1,D_2\}$ is a Beatty partition. This itself is nothing new, but Theorem 1 gives a natural way to construct a potentially new, related partition. 

For any $n>2$, we can use $\alpha$ to form
\begin{equation*}
    t(k) = (2^{n-1}-1)\lfloor k\alpha \rfloor + k
\end{equation*}
from which Corollary 2.6 gives an $n$ set partition. In addition, Theorem 2.18 tells us that as $n\rightarrow\infty$, the sequence of partitions $(\mathbfcal{D}_n)$ with $D_{1,n} = \{(2^{n-1}-1)\lfloor k\alpha \rfloor + k\}_{k=1}^\infty$ approaches the 2-adic, giving an interpolation between the 2-adic and any Beatty partition with one of $\alpha,\beta$ between 2 and 3. The rest of this paper is a study of interactions of the specific Beatty partition given by $a(k) = \lfloor k\phi \rfloor$ and $b(k) = \lfloor k\phi^2 \rfloor$ with other partitions, one of which is the $n=3$ extension of this partition given above.

To see that the Beatty partition involving $a$ and $b$ is in fact one of our partitions, notice that if we set $h(k) = \lfloor k\phi \rfloor$, we get
\begin{equation*}
    t(k) = \lfloor k\phi \rfloor + k = \lfloor k(\phi+1) \rfloor = \lfloor k\phi^2 \rfloor
\end{equation*}
since $\phi^2 = \phi + 1$. In particular, we study the $n=3$ extension in which we define $d(k) = t(k) = (2^2-1)a(k) + k = 3a(k) + k$. It is interesting to note that $\phi$ is the only positive number such that $\{h(k)\}_{k=1}^\infty = \{\lfloor k\alpha \rfloor\}_{k=1}^\infty$ is exactly the set $D_2$ when we put $D_1 = \{h(k) + k\}_{k=1}^\infty = \{\lfloor k(\alpha+1) \rfloor\}_{k=1}^\infty$. This follows from the fact that 
\begin{equation*}
    \frac{1}{\alpha} + \frac{1}{\alpha+1} = 1\quad \iff \quad \alpha^2-\alpha-1=0.
\end{equation*}

\vspace{\baselineskip}

\section{The Interaction of Two Beatty Partitions}

It is of interest to explore how the classification of integers into $A$ and $B$ numbers relates to the classifications given by other partitionings of the integers. Here $A$ and $B$ represents the specific Beatty partition described in section 2.4: $A = \{a(n)\}_{n=1}^\infty$ and $B = \{b(n)\}_{n=1}^\infty$, where $a(n) = \lfloor n\phi \rfloor$ and $b(n) = \lfloor n\phi ^2 \rfloor$. We examine this question for one of the partitions already discussed here (the case $n=3$), but begin with a simpler case that serves as a model. Also, in this simpler case we can provide a more complete description. 

\subsection{Column Classifications for Beatty Partitions}

\begin{theorem}

(The KLM formula). For integers $K, L $ and $M$ we have 
\[
    a(Ka(n) + Ln + M) = Kb(n) + La(n) + \lfloor M\phi + (L\phi - K) \frac{\{n\phi \}}{\phi } \rfloor
\]

\end{theorem}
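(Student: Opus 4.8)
The plan is to prove the identity by a completely direct computation: start from the definition $a(m) = \lfloor m\phi \rfloor$, substitute $m = Ka(n) + Ln + M$ (which is a positive integer because $K, L, M, n$ and $a(n)$ all are), expand $m\phi$, and repeatedly use the two defining features of the golden ratio, namely $\phi^2 = \phi + 1$ and its immediate consequence $\phi - 1 = 1/\phi$. No Beatty-type input is needed beyond the definitions of $a$ and $b$.

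First I would record the substitution identities for $n\phi$ and $n\phi^2$. From $a(n) = \lfloor n\phi \rfloor = n\phi - \{n\phi\}$ we get $n\phi = a(n) + \{n\phi\}$. For the square, note $n\phi^2 = n(\phi+1) = n\phi + n$, so $\{n\phi^2\} = \{n\phi\}$ since $n$ is an integer; hence $b(n) = \lfloor n\phi^2 \rfloor = n\phi^2 - \{n\phi^2\} = n\phi^2 - \{n\phi\}$, i.e. $n\phi^2 = b(n) + \{n\phi\}$.

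Then I would expand and collect terms. We have
\[
a\big(Ka(n)+Ln+M\big) = \big\lfloor (Ka(n) + Ln + M)\phi \big\rfloor = \big\lfloor K\phi\, a(n) + L\phi\, n + M\phi \big\rfloor .
\]
For the first term, $K\phi\, a(n) = K\phi\big(n\phi - \{n\phi\}\big) = K n\phi^2 - K\phi\{n\phi\} = K\big(b(n) + \{n\phi\}\big) - K\phi\{n\phi\} = Kb(n) - K(\phi-1)\{n\phi\} = Kb(n) - \tfrac{K}{\phi}\{n\phi\}$, using $\phi - 1 = 1/\phi$. For the second term, $L\phi\, n = L\big(a(n) + \{n\phi\}\big) = La(n) + L\{n\phi\}$. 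Substituting back, $Kb(n) + La(n)$ is an integer and therefore factors out of the floor, which leaves
\[
a\big(Ka(n)+Ln+M\big) = Kb(n) + La(n) + \Big\lfloor M\phi + \big(L - \tfrac{K}{\phi}\big)\{n\phi\} \Big\rfloor ,
\]
and rewriting $L - K/\phi = (L\phi - K)/\phi$ converts the fractional remainder into $(L\phi - K)\{n\phi\}/\phi$, which is precisely the asserted formula.

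As for the main obstacle: honestly there isn't one of substance — the entire content is the algebraic bookkeeping with the coefficient of $\{n\phi\}$, plus the observation that $\{n\phi^2\} = \{n\phi\}$ and that the quantity $Kb(n) + La(n)$ pulled outside the floor is genuinely an integer. The one point worth a remark is domain: the argument $Ka(n) + Ln + M$ should lie in the range over which $a$ is being considered (e.g. a positive integer), but since $\lfloor x\phi \rfloor$ is defined for every real $x$, the computation above is valid verbatim in any case.
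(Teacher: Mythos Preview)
Your proof is correct: the direct expansion using $n\phi = a(n) + \{n\phi\}$, $n\phi^2 = b(n) + \{n\phi\}$, and $\phi - 1 = 1/\phi$ is exactly what is needed, and pulling the integer $Kb(n)+La(n)$ out of the floor is legitimate. Note that the paper does not actually prove this theorem at all --- it simply states the KLM formula and cites \cite{F-P-S} for a proof --- so your argument supplies what the paper omits; the approach you give is the standard elementary one and matches how the formula is derived in the cited reference.
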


The following important result is known, but we include a proof for the sake of completeness.

\begin{theorem}

\[
    \{a(n)\phi \} = 1 - \frac{\{ n\phi \} }{\phi },
\]
\[
    \{b(n)\phi \} = \frac{\{ n\phi \} }{\phi ^2}.
\]

\end{theorem}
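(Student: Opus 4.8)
The plan is to reduce both identities to the single basic relation $\phi^2 = \phi+1$, together with its consequences $\tfrac{1}{\phi} = \phi-1$ and $\tfrac{1}{\phi^2} = 2-\phi$, plus the elementary fact $b(n) = a(n)+n$. First I would establish this last fact: since $n\phi^2 = n\phi + n$ and $n$ is an integer, $\{n\phi^2\} = \{n\phi\}$, so $b(n) = \lfloor n\phi^2\rfloor = n\phi + n - \{n\phi\} = \lfloor n\phi\rfloor + n = a(n)+n$.

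For the first identity I would compute $a(n)\phi$ directly. Writing $a(n) = n\phi - \{n\phi\}$ gives $a(n)\phi = n\phi^2 - \phi\{n\phi\}$, and substituting $n\phi^2 = n + n\phi = (n+\lfloor n\phi\rfloor) + \{n\phi\}$ yields $a(n)\phi = (n+\lfloor n\phi\rfloor) + \{n\phi\}(1-\phi) = (n+\lfloor n\phi\rfloor) - \tfrac{\{n\phi\}}{\phi}$, using $\phi - 1 = \tfrac{1}{\phi}$. Because $\phi$ is irrational and $n\ge 1$, we have $0 < \{n\phi\} < 1$, hence $0 < \tfrac{\{n\phi\}}{\phi} < 1$; therefore the integer part of $a(n)\phi$ is $n+\lfloor n\phi\rfloor - 1$ and $\{a(n)\phi\} = 1 - \tfrac{\{n\phi\}}{\phi}$. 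For the second identity I would then use $b(n)\phi = (a(n)+n)\phi = a(n)\phi + n\phi$; substituting the expression just obtained for $a(n)\phi$ and writing $n\phi = \lfloor n\phi\rfloor + \{n\phi\}$ gives $b(n)\phi = (n+2\lfloor n\phi\rfloor) + \{n\phi\}\bigl(1-\tfrac{1}{\phi}\bigr) = (n+2\lfloor n\phi\rfloor) + \tfrac{\{n\phi\}}{\phi^2}$, using $1-\tfrac{1}{\phi} = 2-\phi = \tfrac{1}{\phi^2}$. Since $0 < \tfrac{\{n\phi\}}{\phi^2} < 1$, taking fractional parts gives $\{b(n)\phi\} = \tfrac{\{n\phi\}}{\phi^2}$.

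The only step that requires any care is certifying that the correction terms $\tfrac{\{n\phi\}}{\phi}$ and $\tfrac{\{n\phi\}}{\phi^2}$ genuinely lie in the open interval $(0,1)$ so that the floors come out right; the strict left endpoint is what matters for the first identity, and this is precisely where the irrationality of $\phi$ (giving $\{n\phi\}\neq 0$) is used. Everything else is bookkeeping with $\phi^2 = \phi+1$, and no deeper tool — not even the KLM formula — is needed.
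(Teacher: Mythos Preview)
Your proof is correct and takes a genuinely more elementary route than the paper's. The paper first invokes the KLM formula (Theorem~3.1) to establish $a(a(n))=b(n)-1$ and $a(b(n))=a(n)+b(n)$ (Lemma~3.3), together with the relation $b(n)-\phi a(n)=\{n\phi\}/\phi$ (Lemma~3.4), and then reads off the two fractional-part identities from $\{a(n)\phi\}=a(n)\phi-a(a(n))$ and $\{b(n)\phi\}=b(n)\phi-a(b(n))$. You bypass the KLM machinery entirely: by expanding $a(n)\phi$ and $b(n)\phi$ directly via $\phi^2=\phi+1$ you obtain explicit integer-plus-remainder decompositions and need only check that the remainders lie in $(0,1)$. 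The payoff of your approach is self-containment and brevity; the paper's approach, while heavier here, sets up the KLM formula and the $a(a(n))$, $a(b(n))$ identities as reusable tools for the later and more involved fractional-part computations in Sections~3 and~4.
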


For the proof we use the KLM formula and a precise form of the fact that $b(n)$ is approximately $\phi a(n)$.

\begin{lemma}

\[
a(a(n)) = a(n)+n-1 = b(n)-1,
\]
\[
a(b(n)) = a(n)+b(n).
\]

\end{lemma}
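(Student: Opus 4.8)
The plan is to exploit the basic Beatty dichotomy: since $A = \{a(n)\}$ and $B = \{b(n)\}$ partition $\mathbb{N}$ and $\phi^2 = \phi+1$ gives $b(n) = a(n) + n$, every positive integer is either an $a$-value or a $b$-value but not both. I would begin by recording the elementary identity $b(n) = \lfloor n\phi^2\rfloor = \lfloor n\phi + n\rfloor = \lfloor n\phi\rfloor + n = a(n) + n$, which I will use repeatedly. The goal is to pin down $a(a(n))$ and $a(b(n))$ by identifying the argument $a(n)$ (resp. $b(n)$) as a Beatty value of known index, or by direct estimation of $\lfloor a(n)\phi\rfloor$ and $\lfloor b(n)\phi\rfloor$ using $a(n) = n\phi - \{n\phi\}$.

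For the first identity, I would compute $a(a(n)) = \lfloor a(n)\phi\rfloor = \lfloor (n\phi - \{n\phi\})\phi\rfloor = \lfloor n\phi^2 - \phi\{n\phi\}\rfloor = \lfloor n\phi + n - \phi\{n\phi\}\rfloor = n + \lfloor n\phi - \phi\{n\phi\}\rfloor$. Since $n\phi = a(n) + \{n\phi\}$, the bracket becomes $\lfloor a(n) + \{n\phi\} - \phi\{n\phi\}\rfloor = a(n) + \lfloor \{n\phi\}(1-\phi)\rfloor = a(n) + \lfloor -\{n\phi\}/\phi\rfloor$. Because $0 < \{n\phi\} < 1$ (irrationality) we have $-1 < -\{n\phi\}/\phi < 0$, so that floor is $-1$, giving $a(a(n)) = a(n) + n - 1$. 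Then $a(n) + n = b(n)$ yields $a(a(n)) = b(n) - 1$ as well. Alternatively — and perhaps more cleanly for the write-up — one can invoke the KLM formula (Theorem 3.2) with $K=1, L=0, M=0$: $a(a(n)) = b(n) + \lfloor -\{n\phi\}/\phi\rfloor = b(n) - 1$, which is the same computation packaged differently.

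For the second identity, apply the KLM formula with $K=0$, $L=1$, $M=0$: $a(0\cdot a(n) + 1\cdot n + 0)$ is not quite the shape we want, so instead I would write $b(n) = a(n) + n = 1\cdot a(n) + 1\cdot n + 0$ and apply KLM with $K=1$, $L=1$, $M=0$, obtaining $a(b(n)) = a(a(n)+n) = b(n) + a(n) + \lfloor (\phi - 1)\{n\phi\}/\phi\rfloor = a(n) + b(n) + \lfloor \{n\phi\}/\phi^2\rfloor$. Since $0 < \{n\phi\}/\phi^2 < 1$, that last floor is $0$, giving $a(b(n)) = a(n) + b(n)$. I would double-check the coefficient $(L\phi - K)/\phi = (\phi-1)/\phi = 1/\phi^2$ against the statement of Theorem 3.2 to make sure the bracket term is handled correctly. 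The main obstacle is purely bookkeeping: getting the KLM substitution indices right and verifying that the fractional-part expressions genuinely lie in the open intervals $(-1,0)$ and $(0,1)$ — which they do because $\{n\phi\} \in (0,1)$ strictly, $\phi$ being irrational, so no edge case arises. The argument then closes immediately.
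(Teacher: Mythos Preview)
Your proposal is correct and follows essentially the same route as the paper: the paper's proof is a one-line appeal to the KLM formula, first with $K=1$, $L=M=0$ (giving $a(a(n)) = b(n) + \lfloor -\{n\phi\}/\phi\rfloor = b(n)-1$) and then with $K=L=1$, $M=0$ (giving $a(b(n)) = b(n) + a(n) + \lfloor \{n\phi\}/\phi^2\rfloor = a(n)+b(n)$), exactly as you do. Your direct computation for $a(a(n))$ is just the KLM step unpacked by hand, so there is no substantive difference; the only slip is that the KLM formula is Theorem~3.1 in the paper, not Theorem~3.2.
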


\begin{proof}

We use the KLM formula, first with $K = 1, L = M = 0$ and then with $K = L = 1, M = 0$.
\end{proof}

\begin{lemma}

$b(n) - \phi a(n) = \frac{\{ n\phi \} }{\phi}$.

\end{lemma}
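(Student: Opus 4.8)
The plan is to prove this by a direct computation, exploiting the algebraic identity $\phi^2 = \phi + 1$ twice. The key preliminary observation is that $b(n) = a(n) + n$ for every $n$: since $\phi^2 = \phi + 1$ we have $n\phi^2 = n\phi + n$, and because $n$ is an integer, $b(n) = \lfloor n\phi^2 \rfloor = \lfloor n\phi \rfloor + n = a(n) + n$. (This is exactly the identity underlying Example 2.7.) I would state this first.

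Next I would write $a(n) = n\phi - \{n\phi\}$ from the definition of the fractional part, and substitute into $b(n) - \phi a(n)$. Using $b(n) = a(n) + n$ gives
\[
    b(n) - \phi a(n) = a(n) + n - \phi a(n) = n - (\phi - 1)a(n).
\]
Then I would use the other consequence of $\phi^2 - \phi - 1 = 0$, namely $\phi - 1 = \tfrac{1}{\phi}$, together with $a(n) = n\phi - \{n\phi\}$, to compute
\[
    (\phi - 1)a(n) = \frac{1}{\phi}\bigl(n\phi - \{n\phi\}\bigr) = n - \frac{\{n\phi\}}{\phi}.
\]
Plugging this back in yields $b(n) - \phi a(n) = n - n + \tfrac{\{n\phi\}}{\phi} = \tfrac{\{n\phi\}}{\phi}$, which is the claim.

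There is essentially no obstacle here; the statement is a short identity and the only ``content'' is the repeated use of $\phi^2 = \phi + 1$ (once in the guise $b(n) = a(n)+n$ and once as $\phi - 1 = 1/\phi$). One could instead try to derive it from Lemma 3.4 — the relation $a(a(n)) = b(n) - 1$ rewritten via $a(m) = m\phi - \{m\phi\}$ gives $b(n) - \phi a(n) = 1 - \{a(n)\phi\}$ — but finishing that way would require the value of $\{a(n)\phi\}$, which is part of Theorem 3.2 whose proof in turn relies on this lemma; so I would avoid that route to keep the logical order clean and stick with the elementary computation above.
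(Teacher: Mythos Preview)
Your proof is correct and follows essentially the same route as the paper: both use $b(n)=a(n)+n$ and the identity $\phi-1=1/\phi$ to reduce $b(n)-\phi a(n)$ to $(n\phi-a(n))/\phi=\{n\phi\}/\phi$. Your remark about avoiding the circular detour through Lemma~3.4 is apt, since the paper indeed proves this lemma before using it to establish Theorem~3.2.
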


\begin{proof}

Since $\phi = 1 + \frac{1}{\phi }$, the left side
\begin{align*}
    b(n) - \phi a(n) &= a(n) + n - \phi a(n) = n - \frac{a(n)}{\phi }\\
    &= \frac{n\phi - a(n)}{\phi }= \frac{\{n\phi \}}{\phi }.
\end{align*}
\end{proof}
We may now prove the above theorem.
\\Add $a(n)\phi $ to both sides of $-a(a(n)) = 1 - b(n)$ to obtain
\begin{align*}
    \{a(n)\phi \} &= a(n)\phi - \lfloor a(n)\phi \rfloor = a(n)\phi - a(a(n))\\
    &= 1 - b(n) + a(n)\phi = 1 - \frac{\{n\phi \}}{\phi}.
\end{align*}
Next, add $b(n)\phi $ to both sides of $-a(b(n)) = -a(n) - b(n)$ to obtain, since $\phi - 1 = \frac{1}{\phi }$,
\begin{align*}
    \{b(n)\phi \} &= b(n)\phi - \lfloor b(n)\phi \rfloor = b(n)\phi - a(b(n)) = b(n)\phi - b(n) - a(n)\\
    &= -a(n) + b(n) \cdot (\phi -1) = -a(n) + \frac{b(n)}{\phi } = \frac{\{n\phi \}}{\phi ^2}.
\end{align*}
This proves the theorem.

\begin{corollary}
$\{a(n)\phi \} + \phi \{b(n)\phi \} = 1$.
\end{corollary}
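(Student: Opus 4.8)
The plan is to obtain this identity as an immediate algebraic consequence of Theorem 3.2, which supplies closed forms for both fractional parts appearing on the left-hand side. Specifically, Theorem 3.2 gives $\{a(n)\phi\} = 1 - \frac{\{n\phi\}}{\phi}$ and $\{b(n)\phi\} = \frac{\{n\phi\}}{\phi^2}$, so the only work required is to substitute these two expressions and simplify.

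First I would write
\[
    \{a(n)\phi\} + \phi\,\{b(n)\phi\} = \left(1 - \frac{\{n\phi\}}{\phi}\right) + \phi\cdot\frac{\{n\phi\}}{\phi^2}.
\]
Then I would observe that $\phi\cdot\frac{\{n\phi\}}{\phi^2} = \frac{\{n\phi\}}{\phi}$, so the two terms involving $\{n\phi\}$ cancel, leaving the value $1$. No case analysis, no estimates, and no appeal to the KLM formula beyond what is already embedded in Theorem 3.2 are needed.

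There is essentially no obstacle here: the identity is a one-line corollary, and the only thing to be careful about is the bookkeeping of the factor $\phi$ versus $\phi^2$ in the denominators. It is worth noting for context that this relation can be read as saying the point $(\{a(n)\phi\}, \{b(n)\phi\})$ lies on the fixed line $x + \phi y = 1$ in the unit square, parametrized by $\{n\phi\}$; this geometric remark is optional and not required for the proof.
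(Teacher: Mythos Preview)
Your proof is correct and follows exactly the intended approach: the paper states this as an immediate corollary of Theorem~3.2 without further argument, and your substitution of the two fractional-part formulas followed by cancellation is precisely the one-line verification implied.
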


Beatty's Theorem applied to $\frac{1}{\phi ^2 /2} + \frac{1}{\phi ^3} = 1$ shows that the infinite sequences
\[
c(n) = \lfloor n \cdot \frac{\phi ^2}{2} \rfloor \text{ and } d(n) = \lfloor n\phi ^3 \rfloor , n = 1, 2, 3, \ldots
\]
partition the set of positive integers. Call these two sequences (we may also call them sets) $C$ and $D$. We shall investigate how the elements of $C$ and $D$ are distributed among the sets $A$ and $B$, and vice-versa. The following result illuminates this subject.

\begin{theorem}
For any $n \geq 1$ the fractional part $\{d(n)\phi \}$ is in the interval 
$(\frac{3-\sqrt{5}}{2}, \frac{1}{2}) \text{ if } \{n\phi \} > \frac{1}{2}, \text{ and in the interval }
(\frac{4-\sqrt{5}}{2}, 1) \text{ if } \{n\phi \} < \frac{1}{2}$.
Each interval clearly has length $\frac{\sqrt{5}-2}{2}$.
\\For any $n \geq 1$ the fractional part $\{c(2n)\phi \}$ lies in the interval
$(0, \frac{3-\sqrt{5}}{2})$ while the fractional part $\{c(2n+1)\phi \}, n \geq 1, $ lies in the interval $(\frac{1}{2}, \frac{4-\sqrt{5}}{2})$.
Each of these last two intervals clearly has length $\frac{3-\sqrt{5}}{2}$.
\end{theorem}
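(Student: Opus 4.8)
The plan is to reduce every assertion to Theorem 3.2 (the identities for $\{a(n)\phi\}$ and $\{b(n)\phi\}$), using only the relations $\phi^2=\phi+1$ and $\phi^3=2\phi+1$ to rewrite $c$ and $d$ in terms of $a$ and $b$. First I would record the arithmetic identities
\[
    d(n)=a(2n)+n,\qquad c(2n)=b(n),
\]
which are immediate from $\lfloor n(2\phi+1)\rfloor=n+\lfloor 2n\phi\rfloor$ and $\lfloor 2n\cdot\tfrac{\phi^2}{2}\rfloor=\lfloor n\phi^2\rfloor$. For the odd index, writing $(2n+1)\tfrac{\phi^2}{2}=n\phi^2+\tfrac{\phi^2}{2}$, using $\{n\phi^2\}=\{n\phi\}$ and $\tfrac{\phi^2}{2}=1+\tfrac{\phi-1}{2}$ gives
\[
    c(2n+1)=b(n)+1+\Big\lfloor\{n\phi\}+\tfrac{\phi-1}{2}\Big\rfloor,
\]
so $c(2n+1)$ equals $b(n)+1$ when $\{n\phi\}<\tfrac{3-\phi}{2}$ and $b(n)+2$ when $\{n\phi\}>\tfrac{3-\phi}{2}$ (equality cannot occur, as it would force $(2n+1)\phi\in\mathbb{Z}$).

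Next I would handle $d$. From $d(n)=a(2n)+n$ we get $\{d(n)\phi\}=\{\{a(2n)\phi\}+\{n\phi\}\}$, and Theorem 3.2 gives $\{a(2n)\phi\}=1-\tfrac{\{2n\phi\}}{\phi}$, hence $\{d(n)\phi\}=\{\{n\phi\}-\tfrac{\{2n\phi\}}{\phi}\}$. Now I would split on the sign of $\{n\phi\}-\tfrac12$, which is precisely the condition deciding whether $\{2n\phi\}=2\{n\phi\}$ or $2\{n\phi\}-1$. In either case $\{d(n)\phi\}$ becomes an affine function of $\{n\phi\}$ with slope $3-2\phi=2-\sqrt5$; one checks that its range over $\{n\phi\}\in(\tfrac12,1)$, respectively $(0,\tfrac12)$, already lies in $[0,1)$, so no further reduction modulo $1$ is needed, and the two ranges work out after routine simplification to $(\tfrac{3-\sqrt5}{2},\tfrac12)$ and $(\tfrac{4-\sqrt5}{2},1)$.

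For $c$, the even case is immediate: $c(2n)=b(n)$ and $\{b(n)\phi\}=\tfrac{\{n\phi\}}{\phi^2}$ give $\{c(2n)\phi\}\in\big(0,\tfrac{1}{\phi^2}\big)=\big(0,\tfrac{3-\sqrt5}{2}\big)$. For the odd case I would again use $\{b(n)\phi\}=\tfrac{\{n\phi\}}{\phi^2}$ and add $\phi$ (in the subcase $\{n\phi\}<\tfrac{3-\phi}{2}$) or $2\phi$ (in the subcase $\{n\phi\}>\tfrac{3-\phi}{2}$), then reduce modulo $1$ and track the affine image of the permitted range of $\{n\phi\}$. The first subcase yields $(\phi-1,\tfrac{4-\sqrt5}{2})$ and the second $(\tfrac12,\phi-1)$; since $\phi-1=\tfrac1\phi$ lies in $(\tfrac12,\tfrac{4-\sqrt5}{2})$, the union is contained in $(\tfrac12,\tfrac{4-\sqrt5}{2})$, which is the claim.

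The main obstacle is the $c(2n+1)$ analysis: it is the only place forcing a genuine case split (between $b(n)+1$ and $b(n)+2$), and one must verify both that the two resulting sub-intervals abut correctly at $\phi-1$ and that in each subcase the step ``add $\phi$ or $2\phi$, then reduce mod $1$'' lands the expression in $[0,1)$ with no off-by-one error. Everything else is bookkeeping: confirming that the relevant affine functions are monotone with the stated slopes, and converting the $\phi$-valued endpoints into the $\sqrt5$-forms appearing in the statement.
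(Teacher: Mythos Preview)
Your proposal is correct and follows essentially the same route as the paper: both arguments identify $c(2n)=b(n)$ directly, write $c(2n+1)=b(n)+1$ or $b(n)+2$ according to whether $\{n\phi\}$ lies below or above the threshold $\tfrac{3-\phi}{2}=\tfrac{5-\sqrt5}{4}$ (the paper calls this $\lambda$), and then read off all the intervals from Theorem~3.2. The only cosmetic difference is in the $d(n)$ computation: the paper uses the KLM formula to write $d(n)=2a(n)+n+\lfloor 2\{n\phi\}\rfloor$ and then invokes $\{a(n)\phi\}=1-\{n\phi\}/\phi$, while you write $d(n)=a(2n)+n$ and invoke $\{a(2n)\phi\}=1-\{2n\phi\}/\phi$ before splitting $\{2n\phi\}$ according to $\{n\phi\}\lessgtr\tfrac12$; the two computations are algebraically equivalent and produce the same affine expression $(2-\sqrt5)\{n\phi\}$ plus a constant.
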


\begin{remark}
These intervals are all disjoint and their total length is 1. It is curious how the criteria for interval membership is defined by an inequality for $d(n)$ and by parity for $c(n)$.
\end{remark}

We first establish a lemma.

\begin{lemma}
\[
d(n) =\left\{ \begin{array}{ll}
	   2 \cdot a(n) + n,  &  \text{ if } \{n\phi \} < \frac{1}{2},\\
	   2 \cdot a(n) + n+1, & \text{ if } \{n\phi\} > \frac{1}{2}.
	\end{array}\right.
\]
\end{lemma}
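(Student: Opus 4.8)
The plan is to compute $d(n) = \lfloor n\phi^3 \rfloor$ directly by writing $\phi^3$ in terms of $\phi$ and extracting the integer part. Since $\phi^2 = \phi + 1$, we have $\phi^3 = \phi\cdot\phi^2 = \phi(\phi+1) = \phi^2 + \phi = 2\phi + 1$. Therefore $n\phi^3 = 2n\phi + n = 2(a(n) + \{n\phi\}) + n = 2a(n) + n + 2\{n\phi\}$. Since $2a(n) + n$ is an integer, we get $d(n) = \lfloor n\phi^3 \rfloor = 2a(n) + n + \lfloor 2\{n\phi\} \rfloor$.

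The remaining step is to evaluate $\lfloor 2\{n\phi\} \rfloor$. Because $\phi$ is irrational, $\{n\phi\} \neq \frac{1}{2}$ for all $n \geq 1$, so exactly one of $\{n\phi\} < \frac{1}{2}$ or $\{n\phi\} > \frac{1}{2}$ holds. In the first case $0 \le 2\{n\phi\} < 1$, so $\lfloor 2\{n\phi\}\rfloor = 0$ and $d(n) = 2a(n) + n$. In the second case $1 < 2\{n\phi\} < 2$, so $\lfloor 2\{n\phi\}\rfloor = 1$ and $d(n) = 2a(n) + n + 1$. This is exactly the claimed dichotomy.

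There is essentially no obstacle here — the only point requiring any care is the observation that $\{n\phi\}$ is never exactly $\frac{1}{2}$ (equivalently, $n\phi \notin \frac{1}{2} + \mathbb{Z}$), which is immediate from the irrationality of $\phi$. The identity $\phi^3 = 2\phi + 1$ is the key algebraic input, and everything else is the standard manipulation of splitting $n\phi$ into its integer part $a(n)$ and fractional part $\{n\phi\}$. This lemma will presumably then be combined with Theorem 3.2 (the fractional-part identities for $\{a(n)\phi\}$ and $\{b(n)\phi\}$) to locate $\{d(n)\phi\}$ in the intervals asserted in Theorem 3.6.
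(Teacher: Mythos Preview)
Your proof is correct and essentially the same as the paper's: both use $\phi^3 = 2\phi + 1$ to reduce $d(n)$ to $2a(n) + n + \lfloor 2\{n\phi\}\rfloor$ and then split on whether $\{n\phi\}$ lies below or above $\tfrac{1}{2}$. The only difference is cosmetic: the paper passes through $d(n) = n + a(2n)$ and invokes the KLM formula (with $K=M=0$, $L=2$) to compute $a(2n) = 2a(n) + \lfloor 2\{n\phi\}\rfloor$, whereas you obtain the same identity directly from $2n\phi = 2a(n) + 2\{n\phi\}$.
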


\begin{proof}
By the KLM formula,
\begin{align*}
    d(n) &= \lfloor n + 2\phi n \rfloor = n + a(2n) = n + 2a(n) + \lfloor (2\phi ) \cdot \frac{\{n\phi \}}{\phi } \rfloor \\
    &= n + 2a(n) + \lfloor 2 \{n\phi \} \rfloor
\end{align*}
where we set $K = M = 0$ and $L = 2$. The result follows.
\end{proof}
We can now prove the assertions about $d(n)$. First, say $\{n\phi \} < \frac{1}{2}$. Then
\begin{align*}
    \{d(n) \phi \} &= \{n\phi + 2 \cdot a(n)\phi \} = \{\{n\phi \} + 2\cdot \{a(n)\phi \} \}\\
    &= \{\{n\phi \} + 2\cdot (1-\frac{\{n\phi \}}{\phi }) \} = \{(2-\sqrt{5})\{n\phi \} \}\\
    &= 1 + (2-\sqrt{5})\{n\phi \},
\end{align*}
so $\{d(n)\phi \}$ lies in $(\frac{4-\sqrt{5}}{2}, 1)$. Next, say $\{n\phi \} > \frac{1}{2}$. Then
\begin{align*}
    \{d(n) \phi \} &= \{n\phi + \phi -1 + 2 \cdot a(n)\phi \} = \{\{n\phi \} + 2\cdot (1-\frac{\{n\phi \}}{\phi })+\frac{1}{\phi } \}\\
    &= \{(2-\sqrt{5})\{n\phi \} + \frac{1}{\phi } \}.
\end{align*}
This has the form $\{p\}$ where $0 < p < 1$. Thus $\{d(n)\phi \} = \frac{1}{\phi } - (\sqrt{5}-2)\{n\phi \}$ and thus $\{d(n)\phi \}$ lies in $(\frac{3-\sqrt{5}}{2}, \frac{1}{2})$.
\vspace{\baselineskip}
\\The case of $c(2n)\phi $ is straightforward.
\[
\{c(2n)\phi \} = \{\lfloor (2n)\cdot \frac{\phi ^2}{2} \rfloor \phi \} = \{b(n)\phi \} = \frac{\{n\phi \}}{\phi ^2} \in (0, \frac{3-\sqrt{5}}{2}).
\]
A more detailed consideration is needed for $\{c(2n+1)\phi \}$. Set $\lambda = \frac{5-\sqrt{5}}{4}$. Since $\frac{\phi ^2}{2} = \frac{3+\sqrt{5}}{4}$ we have $\lambda < 1$ and $\frac{\phi ^2}{2} + \lambda = 2$.

\begin{lemma}
\[
\phi ^3\{c(2n+1)\phi \}-\phi \{n\phi \} =\left\{ \begin{array}{ll}
	   \phi ^2,  &  \text{ if } \{n\phi \} < \lambda,\\
	   1, & \text{ if } \{n\phi\} > \lambda.
	\end{array}\right.
\]
\end{lemma}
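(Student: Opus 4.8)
The plan is to reduce the statement to the single fractional-part identity $\{b(n)\phi\} = \{n\phi\}/\phi^2$ of Theorem 3.2, after first rewriting $c(2n+1)$ in terms of $b(n)$. First I would observe that $c(2n+1) = \lfloor (2n+1)\phi^2/2\rfloor = \lfloor n\phi^2 + \phi^2/2\rfloor$, and that $\phi^2 = \phi + 1$ forces $\{n\phi^2\} = \{n\phi\}$, so $n\phi^2 = b(n) + \{n\phi\}$ and hence $c(2n+1) = b(n) + \lfloor \{n\phi\} + \phi^2/2\rfloor$. Since $\phi^2/2 = (3+\sqrt5)/4$ lies strictly between $1$ and $2$ and $\lambda = 2 - \phi^2/2$, the quantity $\{n\phi\} + \phi^2/2$ lies in the open interval from $1$ to $3$ and exceeds $2$ exactly when $\{n\phi\} > \lambda$; the boundary value $\lambda$ is never attained, since $\{n\phi\} = \lambda$ would force $(2n+1)\sqrt5$ to be rational. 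Therefore $c(2n+1) = b(n) + 1$ when $\{n\phi\} < \lambda$ and $c(2n+1) = b(n) + 2$ when $\{n\phi\} > \lambda$.

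Next I would compute $\{c(2n+1)\phi\}$ in each case. When $\{n\phi\} < \lambda$ we have $c(2n+1)\phi = b(n)\phi + \phi$, so $\{c(2n+1)\phi\} = \{\{b(n)\phi\} + \phi\} = \{\{n\phi\}/\phi^2 + \phi\}$; since $0 < \{n\phi\} < 1$ gives $1 < \phi < \{n\phi\}/\phi^2 + \phi < \phi + 1/\phi^2 = 2$, this fractional part equals $\{n\phi\}/\phi^2 + \phi - 1$. When $\{n\phi\} > \lambda$ we have $c(2n+1)\phi = b(n)\phi + 2\phi$, and similarly $3 < 2\phi < \{n\phi\}/\phi^2 + 2\phi < 2\phi + 1/\phi^2 < 4$, so $\{c(2n+1)\phi\} = \{n\phi\}/\phi^2 + 2\phi - 3$.

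Finally, multiplying through by $\phi^3$ and subtracting $\phi\{n\phi\}$ makes the term $\phi^3 \cdot \{n\phi\}/\phi^2 = \phi\{n\phi\}$ cancel, leaving $\phi^3(\phi - 1) = \phi^2$ in the first case (using $\phi - 1 = 1/\phi$) and $\phi^3(2\phi - 3) = 2\phi^4 - 3\phi^3 = 1$ in the second (using $\phi^3 = 2\phi + 1$ and $\phi^4 = 3\phi + 2$). These are exactly the two claimed values, so the lemma follows.

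The main obstacle is the first step: correctly pinning down the case split for $c(2n+1)$ and, in the second step, determining which integer is subtracted when passing to a fractional part. Both are elementary estimates driven by the explicit value $\phi^2/2 = (3+\sqrt5)/4$ together with the bound $0 < \{n\phi\} < 1$, but they must be carried out carefully; once they are in place, the remaining simplification is a mechanical use of $\phi^2 = \phi + 1$.
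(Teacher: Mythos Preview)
Your proof is correct and follows essentially the same route as the paper: you show $c(2n+1)=b(n)+e(n)$ with $e(n)\in\{1,2\}$ determined by whether $\{n\phi\}$ lies below or above $\lambda=2-\phi^2/2$, then apply the identity $\{b(n)\phi\}=\{n\phi\}/\phi^2$ from Theorem~3.2 and bound the resulting quantity to extract the fractional part. The only differences are cosmetic---you write $\phi-1$ and $2\phi-3$ where the paper uses the equivalent forms $1/\phi$ and $2/\phi-1$, and you verify the final constants via $\phi^3=2\phi+1$, $\phi^4=3\phi+2$ rather than the paper's $2\phi^2-\phi^3=1$.
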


\begin{proof}
Let
\[
e(n) =\left\{ \begin{array}{ll}
	   1,  &  \text{ if } \{n\phi \} < \lambda,\\
	   2, & \text{ if } \{n\phi\} > \lambda.
	\end{array}\right.
\]
Since $0 < \{n\phi \} < 1 < \frac{\phi ^2}{2} < 2 = \frac{\phi ^2}{2} + \lambda, \text{ and } 1+\frac{\phi ^2}{2} < 3$, we have that $\{n\phi \} + \frac{\phi ^2}{2}$ is strictly between 1 and 3. So the above equality $2 = \frac{\phi ^2}{2} + \lambda$ shows that $\lfloor \{n\phi \} + \frac{\phi ^2}{2} \rfloor = e(n)$.
Now
\begin{align*}
    c(2n+1) &= \lfloor (2n+1) \cdot \frac{\phi ^2}{2} \rfloor = \lfloor n\phi ^2 + \frac{\phi ^2}{2} \rfloor \\
    &= \lfloor b(n) + \{n\phi ^2\} + \frac{\phi ^2}{2} \rfloor = \lfloor b(n) + \{n\phi \} + \frac{\phi ^2}{2} \rfloor \\
    & = b(n) + \lfloor \{n\phi \} + \frac{\phi ^2}{2} \rfloor = b(n) + e(n).
\end{align*}

In the case $\{n\phi \} < \lambda \text{ }(e(n) = 1)$ we have $\{c(2n+1)\phi \} = \{b(n)\phi + \phi \} = \{\{b(n)\phi \} + \phi \} = \{\frac{\{n\phi \} }{\phi ^2} + \frac{1}{\phi }\} = \frac{\{n\phi \} }{\phi ^2} + \frac{1}{\phi }$ since $\frac{1}{\phi } + \frac{1}{\phi ^2} = 1$. Thus in this case $\phi ^3\{c(2n+1)\phi \} - \phi \{n\phi \} = \phi ^2$.

In the case $\{n\phi \} > \lambda \text{ }(e(n) = 2)$ we have $\{c(2n+1)\phi \} = \{b(n)\phi + 2\phi \} = \{\frac{\{n\phi \} }{\phi ^2} + \frac{2}{\phi }\}$.
Now $1 < \frac{2}{\phi } < \frac{\{n\phi \} }{\phi ^2} + \frac{2}{\phi } < \frac{1}{\phi ^2} + \frac{2}{\phi } = \phi < 2$, so $\{c(2n+1)\phi \} = \frac{\{n\phi \} }{\phi ^2} + \frac{2}{\phi }-1$. This implies that $\phi ^3\{c(2n+1)\phi \} = \phi \{n\phi \} + 2\phi ^2 - \phi ^3$, so $\phi ^3\{c(2n+1)\phi \} - \phi \{n\phi \} = 2\phi ^2 - \phi ^3 = 1$ and the result follows.
\end{proof}

Now observe that $\frac{\lambda }{\phi ^2}+\frac{1}{\phi ^3} = \frac{1}{2} < \frac{1}{\phi } < \frac{\lambda }{\phi ^2} + \frac{1}{\phi } = \frac{4-\sqrt{5}}{2}$.

In the $\phi ^2$ case ($\{n\phi \}< \lambda $) this shows that $\frac{1}{2} < \frac{1}{\phi } < \{c(2n+1)\phi \} < \frac{4-\sqrt{5}}{2}$ where in particular the upper bound is achieved. In the 1 case ($\{n\phi \}> \lambda $) this shows that $\frac{1}{2} < \{c(2n+1)\phi \} < \frac{1}{\phi } < \frac{4-\sqrt{5}}{2}$ where in particular the lower bound is achieved.

This proves the theorem.
\vspace{\baselineskip}

Summary of Fractional Parts Identities:
\[
\phi \{a(n)\phi \} + \{n\phi \} = \phi
\]
\[
\phi ^2\{b(n)\phi \} - \{n\phi \} = 0
\]
\[
\{d(n)\phi \} + (\sqrt{5}-2)\{n\phi \}=\left\{ \begin{array}{ll}
	   1,  &  \text{ if } \{n\phi \} < \frac{1}{2},\\
	   \frac{1}{\phi }, & \text{ if } \{n\phi\} > \frac{1}{2}
	\end{array}\right.
\]
\[
\phi ^3\{c(m)\phi \} - \phi \{n\phi \}=\left\{ \begin{array}{ll}
	   0,  &  \text{ if } m = 2n,\\
	   \phi ^2, & \text{ if } m = 2n+1, \{n\phi\} < \lambda,\\
	   1, & \text{ if } m = 2n+1, \{n\phi\} > \lambda.
	\end{array}\right.
\]

We may now use Weyl's Theorem on uniform distribution which asserts that for $\alpha$ irrational the sequence $\{n\alpha \}_{n=1}^{\infty}$ is uniformly distributed in (0, 1).

Divide the unit interval (0, 1) into 4 parts ($I_1, I_2, I_3, I_4$) that are respectively 
$$(0, \frac{1}{\phi ^2}), (\frac{1}{\phi ^2}, \frac{1}{2}), (\frac{1}{2}, \frac{4-\sqrt{5}}{2}), (\frac{4-\sqrt{5}}{2}, 1).$$

Our results imply that
\begin{align*}
    \{n\phi \} &\in I_1 \Longleftrightarrow n \in B\\
    \{n\phi \} &\in I_2 \cup I_3 \cup I_4 \Longleftrightarrow n \in A\\
    \{n\phi \} &\in I_1 \cup I_3 \Longleftrightarrow n \in C\\
    \{n\phi \} &\in I_2 \cup I_4 \Longleftrightarrow n \in D.
\end{align*}

From this we see that
\begin{align*}
(c(n), d(n)) &\in (A \times A) \cup (B \times A),\\
(a(n), b(n)) &\in (C \times C) \cup (D \times C).
\end{align*}

Note that for arbitrary two integer vectors $(h, k)$, a partitioning of the integers into 2 sequences gives 4 membership classifications for $(h, k)$, so it is notable here that for the $(A, B)$ and $(C, D)$ partitionings we have for the Wythoff pairs $(a(n), b(n))$ and the $(c(n), d(n))$ pairs only 2 possibilities.

We can also be a bit more quantitative. Since $\frac{4-\sqrt{5}}{2} - \frac{1}{2} = \frac{1}{\phi ^2}$, we see that the set of integers $n$ for which $c(n) \in A$ has density $\frac{1}{2}$ and the same for $c(n) \in B$. Since $1-\frac{1}{\phi ^2} = \frac{1}{\phi }$ we see that the set of integers $n$ for which $a(n) \in C$ has density $(\frac{1}{\phi ^2})/(\frac{1}{\phi }) = \frac{1}{\phi}$. Hence the set of integers $n$ for which $a(n) \in D$ has density $\frac{1}{\phi ^2}$.

\subsection{Extension of Fractional Parts Identities}

In connection with the fractional parts identities, we also have the following result related to Fibonacci numbers which can be used to solve more interaction problems.

\begin{theorem}
Define $F_{k}$ as the k-th Fibonacci number for any $k \in \mathbb{N}$. Let $r$ be an odd positive integer and let $m$ and $n$ be any positive integers. Then we have $\phi ^{r} \{m\phi \} - \phi ^{r-2} \{n\phi \} = 1$ if and only if $m = a(n) + n + F_{r}$.
\end{theorem}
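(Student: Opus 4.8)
The plan is to reduce the statement to a single fractional-part computation. First I would rewrite the target value of $m$ using Lemma 3.3: since $a(n) + n = b(n)$, the claim is simply that $\phi^r\{m\phi\} - \phi^{r-2}\{n\phi\} = 1$ holds exactly for $m = b(n) + F_r$. The arithmetic engine is the classical identity tying Fibonacci numbers to powers of $\phi$: with $\psi = \tfrac{1-\sqrt{5}}{2} = -1/\phi$ the conjugate root of $x^2 = x + 1$, a short Binet computation gives $\phi F_r = F_{r+1} - \psi^r$, and because $r$ is odd this becomes $\phi F_r = F_{r+1} + \phi^{-r}$ with $F_{r+1} \in \mathbb{Z}$. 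This is the only place the oddness of $r$ is used.

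For the ``if'' direction I would set $m = b(n) + F_r$ and compute $\{m\phi\} = \{b(n)\phi + F_r\phi\} = \{b(n)\phi + \phi^{-r}\}$, the integer $F_{r+1}$ dropping out. By Theorem 3.2, $\{b(n)\phi\} = \{n\phi\}/\phi^2$, so $\{m\phi\} = \{\,\{n\phi\}/\phi^2 + \phi^{-r}\,\}$. The one estimate that must be checked is that the bracketed quantity already lies in $(0,1)$: it is positive, and since $\{n\phi\}/\phi^2 < 1/\phi^2$ and $\phi^{-r} \le 1/\phi$ for every odd $r \ge 1$, the sum is strictly less than $1/\phi^2 + 1/\phi = 1$. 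Hence no integer part is shed and $\{m\phi\} = \{n\phi\}/\phi^2 + \phi^{-r}$ exactly; multiplying through by $\phi^r$ yields $\phi^r\{m\phi\} = \phi^{r-2}\{n\phi\} + 1$, as required. Note that $m = b(n) + F_r$ is a positive integer, so it is an admissible value in the statement.

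For the converse I would use that $t \mapsto \{t\phi\}$ is injective on the integers: if $\{t_1\phi\} = \{t_2\phi\}$ then $(t_1 - t_2)\phi \in \mathbb{Z}$, forcing $t_1 = t_2$ by irrationality of $\phi$. Given a positive integer $m$ with $\phi^r\{m\phi\} - \phi^{r-2}\{n\phi\} = 1$, solving for $\{m\phi\}$ forces $\{m\phi\} = \phi^{-r} + \{n\phi\}/\phi^2$; but the computation in the ``if'' direction shows $\{(b(n)+F_r)\phi\}$ equals this same value, so $m = b(n) + F_r = a(n) + n + F_r$.

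I expect no serious obstacle. The two points needing attention are picking the right Fibonacci identity, namely $\phi F_r = F_{r+1} + \phi^{-r}$ (valid precisely for odd $r$), and confirming the $(0,1)$ containment, where the case $r = 1$ --- for which $\phi^{-r} = 1/\phi \approx 0.618$ is largest --- is the binding one.
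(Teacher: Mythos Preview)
Your argument is correct and is substantially more direct than the paper's. The paper proves the ``if'' direction by applying the KLM formula with $K=L=1$, $M=F_r$, then invoking a separate lemma to evaluate $\lfloor F_r\phi + (\phi-1)\{n\phi\}/\phi\rfloor = F_{r+1}$, and finally appealing to Cassini's identity to collapse $F_r\phi^{r+1}-F_{r+1}\phi^r$ to $1$. You short-circuit all of this by writing $m=b(n)+F_r$ and using the already-established $\{b(n)\phi\}=\{n\phi\}/\phi^2$ together with the single Binet-type identity $\phi F_r=F_{r+1}+\phi^{-r}$; the $(0,1)$-containment check replaces both the floor lemma and Cassini in one stroke. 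For the converse the contrast is sharper still: the paper posits $m=Ka(n)+Ln+M$ and solves a system of rational/irrational-part equations to pin down $K=L=1$, $M=F_r$, whereas your injectivity observation $\{m\phi\}=\{m'\phi\}\Rightarrow m=m'$ is a one-line consequence of the irrationality of $\phi$ and is exactly what is needed. What the paper's approach buys is a demonstration of the KLM machinery in action; what your approach buys is a proof that isolates the single arithmetic fact (odd $r$ gives $\phi F_r - F_{r+1}=\phi^{-r}>0$) on which the theorem really rests.
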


Before we prove the theorem, we need to establish the following Lemmas.

\begin{lemma}
When $r$ is an odd positive integer, $\lfloor F_{r}\phi + (\phi -1)\cdot \frac{\{n\phi \}}{\phi } \rfloor = F_{r+1}$.
\end{lemma}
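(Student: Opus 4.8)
The plan is to reduce the statement to elementary estimates on powers of $\phi$. First I would use the defining relation $\phi - 1 = 1/\phi$ to rewrite the quantity inside the floor as
\[
    F_r\phi + (\phi-1)\cdot\frac{\{n\phi\}}{\phi} = F_r\phi + \frac{\{n\phi\}}{\phi^2},
\]
so it suffices to show $F_r\phi + \tfrac{\{n\phi\}}{\phi^2}$ lies in the half-open interval $[F_{r+1}, F_{r+1}+1)$, i.e. that $F_r\phi - F_{r+1} + \tfrac{\{n\phi\}}{\phi^2} \in [0,1)$.

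The key algebraic input is the identity $F_r\phi = F_{r+1} + \phi^{-r}$, valid for odd $r$. I would derive this from Binet's formula $F_r = (\phi^r - \psi^r)/\sqrt5$ with $\psi = -1/\phi$: a short computation gives
\[
    F_r\phi - F_{r+1} = \frac{\psi^r(\psi-\phi)}{\sqrt5} = \frac{\psi^r\cdot(-\sqrt5)}{\sqrt5} = -\psi^r = (-1)^{r+1}\phi^{-r},
\]
which equals $\phi^{-r}$ precisely when $r$ is odd. (Alternatively, one can prove $F_r\phi = F_{r+1} + \phi^{-r}$ by induction over odd $r$, stepping by two via $F_{r+2} = F_{r+1} + F_r$ and $\phi^{-r} - \phi^{-(r+2)} = \phi^{-(r+1)}$.) With this in hand the quantity to control becomes $\phi^{-r} + \tfrac{\{n\phi\}}{\phi^2}$.

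Finally I would bound this expression. Positivity is immediate: since $\phi$ is irrational, $0 < \{n\phi\} < 1$, and $\phi^{-r} > 0$. For the upper bound, the strict inequality $\{n\phi\} < 1$ gives $\phi^{-r} + \tfrac{\{n\phi\}}{\phi^2} < \phi^{-r} + \phi^{-2}$; when $r = 1$ this last quantity equals $\phi^{-1} + \phi^{-2} = 1$, so the inequality for the original sum is still strict, while for odd $r \geq 3$ we have $\phi^{-r} \leq \phi^{-3}$ and $\phi^{-3} + \phi^{-2} = \phi^{-2}(\phi^{-1}+1) = \phi^{-2}\cdot\phi = \phi^{-1} < 1$. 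In every case the sum lies in $[0,1)$, so $\lfloor F_r\phi + (\phi-1)\tfrac{\{n\phi\}}{\phi}\rfloor = F_{r+1}$. The only subtle point is the $r=1$ endpoint, where the crude bound $\phi^{-r} + \phi^{-2}$ is exactly $1$ and one must retain the strictness of $\{n\phi\} < 1$; beyond that I foresee no real obstacle, as the lemma is essentially Binet's identity packaged with the observation $\phi^{-2} < \phi^{-1}$.
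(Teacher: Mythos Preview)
Your proof is correct and follows essentially the same approach as the paper: both use Binet's formula to compare $F_r\phi$ with $F_{r+1}$ and then bound the fractional contribution $(\phi-1)\{n\phi\}/\phi$. Your version is slightly cleaner in that you compute the exact identity $F_r\phi - F_{r+1} = \phi^{-r}$ for odd $r$ up front, whereas the paper proves the two inequalities $F_r\phi > F_{r+1}$ and $F_r\phi - 1/\phi \le F_{r+1}$ separately via Binet, but the substance is the same.
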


\begin{proof}
First we will prove that $F_{r}\phi + (\phi -1)\cdot \frac{\{n\phi \}}{\phi } > F_{r+1}$. Note that Fibonacci numbers have a closed-form solution (often known as Binet's formula): $F_{k} = \frac{1}{\sqrt{5}}(\phi ^{k}-(-1)^{k}\cdot (\frac{1}{\phi })^{k})$ for all $k \in \mathbb{N}$. Since $F_{r}\phi + (\phi -1)\cdot \frac{\{n\phi \}}{\phi } > F_{r}\phi + 0 = F_{r}\phi$, and
\[
F_{r}\phi > F_{r+1} \Longleftrightarrow \frac{1}{\sqrt{5}}(\phi ^{r+1} + \frac{1}{\phi ^{r-1}}) > \frac{1}{\sqrt{5}}(\phi ^{r+1} - \frac{1}{\phi ^{r+1}}),
\]
which must be true because $\frac{1}{\phi ^{k}} > 0$ for all $k \in \mathbb{N}$, we have $F_{r}\phi + (\phi -1)\cdot \frac{\{n\phi \}}{\phi } > F_{r+1}$.

Next, we will prove that $F_{r}\phi + (\phi -1)\cdot \frac{\{n\phi \}}{\phi } < F_{r+1} + 1$. Since $F_{r}\phi + (\phi -1)\cdot \frac{\{n\phi \}}{\phi } < F_{r}\phi + (\phi -1)\cdot \frac{1}{\phi }$, and
\[
F_{r}\phi + (\phi -1)\cdot \frac{1}{\phi } \leq F_{r+1}+1 \Longleftrightarrow F_{r}\phi - \frac{1}{\phi} \leq F_{r+1} \Longleftrightarrow \frac{1}{\sqrt{5}}(\frac{1}{\phi ^{r-1}} + \frac{1}{\phi ^{r+1}}) \leq \frac{1}{\phi },
\]
which must be true because $\frac{1}{\sqrt{5}}(\frac{1}{\phi ^{r-1}} + \frac{1}{\phi ^{r+1}}) \leq \frac{1}{\sqrt{5}}(1+\frac{1}{\phi ^2}) = \frac{1}{\phi }$, we have $F_{r}\phi + (\phi -1)\cdot \frac{\{n\phi \}}{\phi } < F_{r+1} + 1$.

Since $F_{r}\phi + (\phi -1)\cdot \frac{\{n\phi \}}{\phi } \in (F_{r+1}, F_{r+1}+1)$, it is obvious that $\lfloor F_{r}\phi + (\phi -1)\cdot \frac{\{n\phi \}}{\phi } \rfloor = F_{r+1}$.
\end{proof}

\begin{lemma}
$\phi ^{k} = F_{k}\phi + F_{k-1}$ for all $k \in \mathbb{N}$.
\end{lemma}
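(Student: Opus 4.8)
The plan is to argue by induction on $k$, using only the defining relation $\phi^2 = \phi + 1$ together with the Fibonacci recurrence $F_{k+1} = F_k + F_{k-1}$. For the base cases I would take $k = 1$ and $k = 2$: with the convention $F_0 = 0$, $F_1 = F_2 = 1$ (the same convention under which Binet's formula as quoted in Lemma 3.15 holds), one checks directly that $\phi^1 = \phi = 1\cdot\phi + 0 = F_1\phi + F_0$ and $\phi^2 = \phi + 1 = 1\cdot\phi + 1 = F_2\phi + F_1$.

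For the inductive step, assume the identity holds for $k$ (and note it also holds for $k-1$). Then
\[
\phi^{k+1} = \phi\cdot\phi^k = \phi(F_k\phi + F_{k-1}) = F_k\phi^2 + F_{k-1}\phi = F_k(\phi+1) + F_{k-1}\phi = (F_k + F_{k-1})\phi + F_k = F_{k+1}\phi + F_k,
\]
which is exactly the asserted identity for $k+1$. This closes the induction and proves the lemma for all $k \in \mathbb{N}$.

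Alternatively, one can give a direct computation from Binet's formula $F_k = \tfrac{1}{\sqrt5}\bigl(\phi^k - (-\phi^{-1})^k\bigr)$, which has already been introduced in the proof of Lemma 3.15. Expanding $F_k\phi + F_{k-1}$, the conjugate terms cancel because $(-\phi^{-1})^k\phi + (-\phi^{-1})^{k-1} = (-\phi^{-1})^{k-1}(-1+1) = 0$, while the main terms give $\phi^{k+1} + \phi^{k-1} = \phi^{k-1}(\phi^2+1) = \sqrt5\,\phi^{k}$ since $\phi^2 + 1 = \phi + 2 = \tfrac{5+\sqrt5}{2} = \sqrt5\,\phi$; dividing by $\sqrt5$ yields $\phi^k$. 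Either route is entirely routine, and there is no real obstacle here: the only point requiring care is fixing the indexing convention for $F_k$ (in particular the value of $F_0$) so that the base cases match, and confirming it is consistent with the convention already used earlier in Section 3.
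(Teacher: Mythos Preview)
Your proof is correct and essentially identical to the paper's: both argue by induction with base case $k=1$ (you add $k=2$, which is harmless but unnecessary for simple induction) and the same one-line inductive computation $\phi^{k+1}=\phi(F_k\phi+F_{k-1})=F_k(\phi+1)+F_{k-1}\phi=F_{k+1}\phi+F_k$. The alternative route via Binet's formula is a nice addition not present in the paper, though your cross-reference ``Lemma 3.15'' should point to the lemma where Binet's formula is invoked (Lemma 3.11 in the paper's numbering).
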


\begin{proof}
Fibonacci numbers are recursively defined such that $F_{k+2} = F_{k+1} + F_{k}$ for all $k \in \mathbb{N}$, and $F_0 = 0, F_1 = 1, F_2 = 1, F_3 = 2, F_4 = 3, F_5 = 5, \ldots$. So when $k = 1$, $\phi ^{k} = \phi = F_{1}\phi + F_{0} = F_{k}\phi + F_{k-1}$.

Suppose that $\phi ^{k} = F_{k}\phi + F_{k-1}$ for some $k \in \mathbb{N}$. Then
\begin{align*}
    \phi ^{k+1} &= \phi ^{k} \cdot \phi = (F_{k}\phi + F_{k-1}) \phi\\
    &= F_{k}\phi ^2 + F_{k-1}\phi = F_{k}(\phi +1) + F_{k-1}\phi\\
    &= (F_{k} + F_{k-1})\phi + F_{k} = F_{k+1}\phi + F_{k}.
\end{align*}
So by induction, we have $\phi ^{k} = F_{k}\phi + F_{k-1}$ for all $k \in \mathbb{N}$.
\end{proof}

Now, we may proceed to the proof of Theorem 3.10. Suppose that $m = a(n) + n + F_{r}$ for some odd positive integer $r$. Using the KLM formula with $K = 1, L = 1, M = F_{r}$, we get
\[
\{m\phi \} = (a(n)+n+F_{r})\phi - (b(n)+a(n)+\lfloor F_{r}\phi + (\phi -1)\cdot \frac{\{n\phi \}}{\phi } \rfloor )
\]
Now, since $\lfloor F_{r}\phi + (\phi -1)\cdot \frac{\{n\phi \}}{\phi } \rfloor = F_{r+1}$ by Lemma 3.11, we have
\begin{align*}
    \{m\phi \} &= \lfloor n\phi \rfloor \phi - \lfloor n\phi ^2 \rfloor + \{n\phi \} + F_{r}\phi - F_{r+1} \\
    &= (n\phi - \{n\phi \})\phi - \lfloor n\phi \rfloor - n + \{n\phi \} + F_{r}\phi - F_{r+1}\\
    &= (2 - \phi )\{n\phi \} + F_{r}\phi - F_{r+1},
\end{align*}
which means
\begin{align*}
    \phi ^{r}\{m\phi \} &= (2-\phi )\phi ^{r}\{n\phi \} + F_{r}\phi ^{r+1} - F_{r+1}\phi ^{r}\\
    &= (1-1/\phi )\phi ^{r}\{n\phi \} + F_{r}\phi ^{r+1} - F_{r+1}\phi ^{r}\\
    &= \phi ^{r-1}(\phi -1)\{n\phi \} + F_{r}\phi ^{r+1} - F_{r+1}\phi ^{r}\\
    &= \phi ^{r-2}\{n\phi \} + F_{r}\phi ^{r+1} - F_{r+1}\phi ^{r}.
\end{align*}
From Lemma 3.12, we know that $\phi ^{r+1} = F_{r+1}\phi + F_{r}$ and $\phi ^{r} = F_{r}\phi + F_{r-1}$. So $F_{r}\phi ^{r+1} - F_{r+1}\phi ^{r} = F_{r} ^2 - F_{r+1} \cdot F_{r-1}$. Cassini's identity (see [p. 41] of \cite{Cassini} for proof) states that 
$$(-1)^{n} = F_{n+1} \cdot F_{n-1} - F_{n} ^2$$ 
for $n \in \mathbb{N}$. Using this and the fact that $r$ is an odd positive integer, we have $F_{r}\phi ^{r+1} - F_{r+1}\phi ^{r} = 1$. Therefore, when $m = a(n) + n + F_{r}$ for some odd positive integer $r$, we have $\phi ^{r} \{m\phi \} - \phi ^{r-2} \{n\phi \} = 1$.
\vspace{\baselineskip}

On the other hand, if $\phi ^{r} \{m\phi \} - \phi ^{r-2} \{n\phi \} = 1$ for some odd positive integer $r$, then we can assume that $m = Ka(n)+Ln+M$, where $K, L, M \in \mathbb{Z}$, and $M$ is some constant independent of $n$. Using the KLM formula, we have
\begin{align*}
    \{m\phi \} &= m\phi - \lfloor m\phi \rfloor = Ka(n)\phi + Ln\phi + M\phi - Kb(n)-La(n)- \lfloor M\phi + (L\phi - K)\cdot \frac{\{n\phi \}}{\phi } \rfloor \\
    &= L\{n\phi \} + Ka(n)(\phi -1) - Kn + M\phi - \lfloor M\phi + (L\phi - K)\cdot \frac{\{n\phi \}}{\phi } \rfloor \\
    &= L\{n\phi \} + K(n\phi - \{n\phi \})(\phi -1) - Kn + M\phi - \lfloor M\phi + (L\phi - K)\cdot \frac{\{n\phi \}}{\phi } \rfloor \\
    &= L\{n\phi \} - K\{n\phi \}\cdot (\phi -1) + M\phi - \lfloor M\phi + (L\phi - K)\cdot \frac{\{n\phi \}}{\phi } \rfloor \\
    &= \{n\phi \}\cdot (L - K(\phi -1)) + M\phi - \lfloor M\phi + (L\phi - K)\cdot \frac{\{n\phi \}}{\phi } \rfloor.
\end{align*}
From Lemma 3.12, we know that $\phi ^{r+1} = F_{r+1}\phi + F_{r}$ and $\phi ^{r} = F_{r}\phi + F_{r-1}$. So,
\begin{align*}
    \phi ^{r}\{m\phi \} &= \{n\phi \}\cdot (L - K(\phi -1))\phi ^{r} + M\phi ^{r+1} - \lfloor M\phi + (L\phi - K)\cdot \frac{\{n\phi \}}{\phi } \rfloor \cdot \phi ^{r}\\
    &= \{n\phi \} (L - K(\phi -1))\phi ^{r} + M (F_{r+1}\phi + F_{r}) - \lfloor M\phi + (L\phi - K) \frac{\{n\phi \}}{\phi } \rfloor \cdot (F_{r}\phi + F_{r-1}).
\end{align*}
We also have $\phi ^{r} \{m\phi \} = \phi ^{r-2} \{n\phi \} + 1$ by assumption. So, as $M\cdot F_{r} - \lfloor M\phi + (L\phi - K)\cdot \frac{\{n\phi \}}{\phi } \rfloor F_{r-1}$ is an integer and $\{n\phi \} (L - K(\phi -1))\phi ^{r} + M\cdot F_{r+1}\phi - \lfloor M\phi + (L\phi - K) \frac{\{n\phi \}}{\phi } \rfloor \cdot F_{r}\phi $ is irrational, we have the following two equations:
$$ \left\{ \begin{array}{ll}
\phi ^{r-2}\{n\phi \} = \{n\phi \} (L - K(\phi -1))\phi ^{r} + M\cdot F_{r+1}\phi - \lfloor M\phi + (L\phi - K) \frac{\{n\phi \}}{\phi } \rfloor \cdot F_{r}\phi ,\\
1 = M\cdot F_{r} - \lfloor M\phi + (L\phi - K)\cdot \frac{\{n\phi \}}{\phi } \rfloor \cdot F_{r-1}.
\end{array}\right.$$
From the first equation above, we have
\[
\phi ^{r-3}\{n\phi \} = \{n\phi \} (L - K(\phi -1))\phi ^{r-1} + M\cdot F_{r+1} - \lfloor M\phi + (L\phi - K) \frac{\{n\phi \}}{\phi } \rfloor \cdot F_{r}.
\]
Since $M\cdot F_{r+1} - \lfloor M\phi + (L\phi - K) \frac{\{n\phi \}}{\phi } \rfloor \cdot F_{r}$ is an integer and $\{n\phi \} (L - K(\phi -1))\phi ^{r-1}$ is irrational, we have the following two equations:
$$ \left\{ \begin{array}{ll}
	   0 = M\cdot F_{r+1} - \lfloor M\phi + (L\phi - K) \frac{\{n\phi \}}{\phi } \rfloor \cdot F_{r},\\
	    \phi ^{r-3} = (L - K(\phi -1))\phi ^{r-1}.
\end{array}\right.$$
From the second equation above, we get
\[
L - K(\phi -1) = L - \frac{K}{\phi } = \frac{1}{\phi ^2} \Longleftrightarrow L\phi ^2 - K\phi = 1 \Longleftrightarrow (L - K)\phi + L = 1 \Longleftrightarrow L = K = 1.
\]
With $L = K = 1$, we now have
$$ \left\{ \begin{array}{ll}
	   1 = M\cdot F_{r} - \lfloor M\phi + (\phi - 1)\cdot \frac{\{n\phi \}}{\phi } \rfloor \cdot F_{r-1},\\
	    0 = M\cdot F_{r+1} - \lfloor M\phi + (\phi - 1)\cdot \frac{\{n\phi \}}{\phi } \rfloor \cdot F_{r}.
	\end{array}\right.$$
By multiplying $F_{r}$ on both sides of the first equation and multiplying $F_{r-1}$ on both sides of the second equation, this is equivalent to the following:
$$ \left\{ \begin{array}{ll}
	   F_{r} = M\cdot F_{r}^2 - \lfloor M\phi + (\phi - 1)\cdot \frac{\{n\phi \}}{\phi } \rfloor \cdot F_{r-1}F_{r},\\
	    0 = M\cdot F_{r+1}F_{r-1} - \lfloor M\phi + (\phi - 1)\cdot \frac{\{n\phi \}}{\phi } \rfloor \cdot F_{r}F_{r-1}.
	\end{array}\right.$$
Subtracting the two equations gives $M(F_{r}^2 - F_{r+1}\cdot F_{r-1}) = F_{r}$. By Cassini's identity and the fact that $r$ is odd, $F_{r}^2 - F_{r+1}\cdot F_{r-1} = 1$. So $M = F_{r}$. By Lemma 3.11, $\lfloor F_{r}\phi + (\phi -1)\cdot \frac{\{n\phi \}}{\phi } \rfloor = F_{r+1}$, the above equations are satisfied.

Therefore, when $r$ is an odd positive integer, $\phi ^{r} \{m\phi \} - \phi ^{r-2} \{n\phi \} = 1$ if and only if $m = a(n) + n + F_{r}$, where $F_{r}$ is the $r$-th Fibonacci number.

\begin{remark}
In section 3.1, we presented the following results in the summary of fractional parts identities: $\phi ^{3}\{c(k)\phi \} - \phi \{n\phi \} = \phi ^2$ if $k = 2n+1, \{n\phi \} < \lambda$, and $\phi ^{3}\{c(k)\phi \} - \phi \{n\phi \} = 1$ if $k = 2n+1, \{n\phi \} > \lambda$. Here $\lambda = \frac{5-\sqrt{5}}{4}$. The first identity is equivalent to $\phi \{c(k)\phi \} - \phi ^{-1}\{n\phi \} = 1 $ if $k = 2n+1, \{n\phi \} < \lambda$. 

Note that these are the special cases of the result in Theorem 3.10, where $r = 1, 3$ respectively. In these cases, $m = c(k)$ can be written as $c(2n+1) = \lfloor (2n+1)\cdot \frac{\phi ^2}{2} \rfloor = \lfloor n(\phi +1) + \frac{\phi +1}{2} \rfloor = a(n) + n + \lfloor \{n\phi \} + \frac{\phi +1}{2} \rfloor$, which equals $a(n)+n+1$ $(F_1 = 1)$ when $\{n\phi \} < \lambda$ and equals $a(n)+n+2$ $(F_3 = 2)$ when $\{n\phi \} > \lambda$.
\end{remark}

\section{The Interaction of the A,B Beatty Partition and its 3-Set Extension}

Now, we take a look at the interaction between the partition given by the sequences $a(k) = \lfloor k\phi \rfloor$ and $b(k) = \lfloor k\phi ^{2} \rfloor$ and the 3-set partition where $l(k) = t(k) = (2^{n-1}-1)h(k) + k$ and $h(k) = \lfloor k\phi \rfloor $. It will be useful from this point on to consider the sets of these partitions as columns. For any $n$-set partition, we will call $D_1$ the first column, $D_2$ the second column, and so on, paralleling the presentation in Table 1. We first prove some important results of 3-set partition which will turn out to be useful in the computation for the densities of classified columns later on.

\subsection{Finding the Second Column of the n-Column Extension}

Any partition of the type described in Theorem 2.2 is determined by the sequence $l$ used to define the set $D_1$. In the $n$-column extension of the $A,B$ partition, we use Corollary 2.6 to construct the following sequence
\begin{equation*}
    t(k) = (2^{n-1}-1)a(k) + k,
\end{equation*}
and we set $l(k) = t(k)$ so that $D_1 = \{t(k)\}_{k=1}^\infty$. It turns out that, in this case, we can find a simple closed form for the sequence which gives the elements of the set $D_2$. We develop this in the next theorem. First, we need a simple lemma.

\begin{lemma}
$A \cup\ (A+1) = \mathbb{N}$
\end{lemma}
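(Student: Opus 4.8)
The claim is that $A \cup (A+1) = \mathbb{N}$, where $A = \{\lfloor k\phi \rfloor\}_{k=1}^\infty$ and $A+1 = \{\lfloor k\phi\rfloor + 1\}_{k=1}^\infty$. The plan is to use the complementary structure of the Beatty pair: since $A$ and $B = \{\lfloor k\phi^2\rfloor\}$ partition $\mathbb{N}$, it suffices to show every element of $B$ is of the form $\lfloor k\phi\rfloor + 1$ for some $k$, i.e.\ $B \subseteq A+1$ (the reverse containment $A+1 \subseteq \mathbb{N}$ being trivial, and $A \subseteq \mathbb{N}$ as well, so the union lies in $\mathbb{N}$; conversely every integer is in $A$ or $B \subseteq A+1$).

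The key tool is Lemma 3.4, which already records the identity $a(a(n)) = a(n) + n - 1 = b(n) - 1$. Rearranging the second equality gives $b(n) = a(n) + n$, and the first gives $b(n) = \lfloor a(n)\phi\rfloor + 1 = a(a(n)) + 1$. Hence for every $n$, the element $b(n)$ equals $a(a(n)) + 1 \in A + 1$. This shows $B \subseteq A+1$. Combined with $A \cup B = \mathbb{N}$ (Beatty's theorem for $\frac1\phi + \frac1{\phi^2} = 1$), we get $\mathbb{N} = A \cup B \subseteq A \cup (A+1) \subseteq \mathbb{N}$, so equality holds.

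Alternatively, one can argue directly from consecutive differences: $a(k+1) - a(k) = \lfloor (k+1)\phi\rfloor - \lfloor k\phi\rfloor \in \{1,2\}$ for all $k$, and $a(1) = 1$. When the gap is $1$, the integers $a(k), a(k)+1 = a(k+1)$ are covered; when the gap is $2$, the integer $a(k)+1$ fills the hole between $a(k)$ and $a(k+1) = a(k)+2$. Since $a(k) \to \infty$ and $a(1) = 1$, every positive integer is either some $a(k)$ or some $a(k)+1$. I would probably present the first argument since Lemma 3.4 is already available and makes it a two-line deduction.

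There is essentially no obstacle here; the only thing to be careful about is making sure the edge case $n=1$ is handled (it is: $a(1) = 1 \in A$, and the gap argument starts from $1$) and that one does not need $B \subseteq A+1$ to be an equality of sets with multiplicity — set containment is all that is used. The lemma is a genuinely routine consequence of the Beatty partition property plus the already-proven relation $b(n) = \lfloor a(n)\phi\rfloor + 1$.
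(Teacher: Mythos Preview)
Your proposal is correct. Both of the arguments you outline are valid, but the paper actually uses your \emph{second} one, not your preferred first one: the paper's entire proof reads ``This follows from the fact that $a(1) = 1$ and $1 \leq a(k+1) - a(k) \leq 2$ for all $k$.'' That is exactly your gap argument.

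Your preferred route via Lemma~3.4 (showing $b(n) = a(a(n)) + 1 \in A+1$, hence $B \subseteq A+1$, and then invoking the Beatty partition $A \cup B = \mathbb{N}$) is a genuinely different and slightly more structural argument. It has the virtue of tying the lemma back to the Beatty complementarity that motivates the whole paper, and it reuses an identity already established. The paper's gap argument, by contrast, is more self-contained and elementary: it needs only the single observation about consecutive differences and avoids invoking either Lemma~3.4 or Beatty's theorem. Both are one- or two-line proofs; neither is clearly superior, though the paper's version is marginally more direct for a reader who has not internalized the $a(a(n))$ identity.
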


\begin{proof}
This follows from the fact that $a(1) = 1$ and $1 \leq a(k+1) - a(k) \leq 2$ for all $k$.
\end{proof}

\begin{theorem}
    Let $D_1,D_2,\ldots,D_n$ be the $n$-set partition generated by $D_1(k) = (2^{n-1}-1)a(k) + k$. If we form a sequence $D_2(k)$ by placing the elements of $D_2$ in increasing order $d_{2,1} < d_{2,2} < d_{2,3} \ldots$ and defining $D_2(k) = d_{2,k}$, then
    \begin{equation*}
        D_2(k) = a(k) + (2^{n-1}-2)k - (2^{n-2} - 1).
    \end{equation*}
    
\end{theorem}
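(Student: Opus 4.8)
The plan is to exhibit an explicit strictly increasing sequence whose range is exactly $D_2$; since a strictly increasing sequence listing a set must list it in increasing order, this identifies $D_2(k)$. Write $q = 2^{n-2}$, so that $D_1(k) = (2q-1)a(k)+k$ and, by Definition 2.1 together with the description of $D_2$ in Theorem 2.2, $D_2 = (D_1 - q)\cup(D_1+q)$ (all elements are positive since $D_1$ is increasing with $D_1(1) = 2q$). Let $f(k) = a(k) + (2q-2)k - (q-1)$ be the conjectured enumeration. It then suffices to prove: (i) $f$ is strictly increasing; (ii) $f(\mathbb{N})\subseteq D_2$; and (iii) $D_2 \subseteq f(\mathbb{N})$.

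The heart of the argument is the pair of identities
\[
    f(a(k)) = D_1(k) - q, \qquad f(a(k)+1) = D_1(k)+q \qquad (k\geq 1).
\]
The first follows by substituting $a(a(k)) = a(k)+k-1$ (Lemma 3.3) into the definition of $f$ and simplifying to $(2q-1)a(k)+k-q = D_1(k)-q$. The second requires the auxiliary fact $a(a(k)+1) = b(k)+1$, which I would obtain from Theorem 3.2: that theorem gives $a(k)\phi = (b(k)-1) + \bigl(1 - \{k\phi\}/\phi\bigr) = b(k) - \{k\phi\}/\phi$, hence $(a(k)+1)\phi = b(k) + \bigl(\phi - \{k\phi\}/\phi\bigr)$, and since $0 < \{k\phi\} < 1$ the number $\phi - \{k\phi\}/\phi$ lies strictly between $\phi - 1/\phi = 1$ and $\phi < 2$, forcing $\lfloor(a(k)+1)\phi\rfloor = b(k)+1$. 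Feeding $a(a(k)+1) = b(k)+1$ and $b(k) = a(k)+k$ into $f$ and simplifying gives $(2q-1)a(k)+k+q = D_1(k)+q$.

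Granting the two identities, the three items are immediate. For (i), $f(k+1)-f(k) = \bigl(a(k+1)-a(k)\bigr) + 2q - 2 \geq 2q-1\geq 1$ because $a(k+1)-a(k)\geq 1$. For (iii), any element of $D_2$ equals $D_1(k)- q$ or $D_1(k)+q$ for some $k$, hence equals $f(a(k))$ or $f(a(k)+1)$. For (ii), Lemma 4.1 says every $m\in\mathbb{N}$ is $a(k)$ or $a(k)+1$ for some $k$, so $f(m)$ is $D_1(k)-q$ or $D_1(k)+q$, an element of $D_2$. Thus $f$ is a strictly increasing map of $\mathbb{N}$ onto $D_2$, so $D_2(k) = f(k)$, which is the claimed formula.

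The only nonroutine step is the auxiliary identity $a(a(k)+1) = b(k)+1$; everything else is bookkeeping with the formula for $D_1$. If one prefers to avoid Theorem 3.2 here, the same identity follows from $a(a(k)) = b(k)-1$ together with the standard fact that $a(m+1)-a(m)=2$ whenever $m\in A$ (equivalently $\{m\phi\}>1/\phi^{2}$), applied to $m = a(k)\in A$.
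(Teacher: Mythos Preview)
Your proof is correct and uses the same key ingredients as the paper's proof: the identities $a(a(k))=b(k)-1$ and $a(a(k)+1)=b(k)+1$ (the paper obtains the second via KLM with $K=1,L=0,M=1$, which is equivalent to your Theorem~3.2 computation), together with Lemma~4.1. The logical packaging differs slightly: the paper first argues inductively that the $a(l)$-th element of $D_2$ is $D_1(l)-2^{n-2}$ and the $(a(l)+1)$-th is $D_1(l)+2^{n-2}$, and only then checks the closed formula at those indices; you instead verify directly that the closed formula $f$ is strictly increasing with range exactly $(D_1-q)\cup(D_1+q)$, which makes the ordering argument automatic. Your route is a bit cleaner for that reason, but mathematically it is the same proof.
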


\begin{proof}
First, we will show that
\begin{align*}
    \{D_2(a(l))\} &= \{D_1(k) - 2^{n-2}\},\\
    \{D_2(a(l)+1)\} &= \{D_1(k) + 2^{n-2}\}.
\end{align*}
This will be of use because it will be easier to define $D_2$ on each of these domains separately and combine them. Since the union of the domains is all of the positive integers by Lemma 4.1, we will have the formula for all positive integers. When $l = 1$, we have
\[
D_2(a(l)) = D_2(a(1)) = D_2(1) = D_1(1)-2^{n-2} = D_1(l)-2^{n-2},
\]
\[
D_2(a(l)+1) = D_2(a(1)+1) = D_2(2) = D_1(1)+2^{n-2} = D_1(l)+2^{n-2}.
\]
We also have
\begin{align*}
    D_2(a(l+1)) = D_2(a(l)+1) &\iff a(l+1) = a(l) + 1\\
    &\iff (2^{n-1}-1)a(l+1) + (l+1) - 2^{n-2} = (2^{n-1}-1)a(l) + l + 2^{n-2} \\
    &\iff D_1(l+1) - 2^{n-2} = D_1(l) + 2^{n-2}.
\end{align*}
I.e., If $a(l)+1$ is the next $A$ number, $a(l+1)$, then $D_1(l) + 2^{n-2} = D_2(a(l)+1) = D_2(a(l+1)) = D_1(l+1) - 2^{n-2}$. If $a(l)+1\not=a(l+1)$, then $a(l)+2 = a(l+1)$ and we have
\begin{align*}
    D_2(a(l+1)) &= D_2(a(l)+2)\\
    &= (2^{n-1}-1)a(l+1)+(l+1)-2^{n-2} \text{ } = D_1(l+1)-2^{n-2}\\
    &\neq (2^{n-1}-1)a(l)+l+2^{n-2} \text{ } = D_1(l)+2^{n-2}.
\end{align*}
This implies
\begin{align*}
    D_2(a(l))&=D_1(l)-2^{n-2}=(2^{n-1}-1)a(l)+l-2^{n-2} \text{ and }\\
    D_2(a(l)+1)&=D_1(l)+2^{n-2}=(2^{n-1}-1)a(l)+l+2^{n-2}
\end{align*}
for all $l \in \mathbb{N}$.
\\In the $a(l)$ case, $D_2(a(l))=D_1(l)-2^{n-2}=(2^{n-1}-1)a(l)+l-2^{n-2}$. On the other hand, we have
\[
a(a(l))+(2^{n-1}-2)a(l)-(2^{n-2}-1)
\]
\[
= (2^{n-1}-1)a(l)-2^{n-2}+a(a(l))-a(l)+1.
\]
From KLM formula ($K = 1, L = 0, M = 0$), we have $a(a(l)) = b(l)-1 = a(l)+l-1$. So 
\[
D_2(a(l)) = a(a(l))+(2^{n-1}-2)a(l)-(2^{n-2}-1).
\]
In the $a(l)+1$ case, $D_2(a(l)+1)=D_1(l)+2^{n-2}=(2^{n-1}-1)a(l)+l+2^{n-2}$. On the other hand, we have
\begin{align*}
    &a(a(l)+1)+(2^{n-1}-2)(a(l)+1)-(2^{n-2}-1)\\
    &= (2^{n-1}-2)a(l)+a(a(l)+1)+2^{n-2}-1\\
    &= (2^{n-1}-1)a(l)+2^{n-2}+a(a(l)+1)-a(l)-1.
\end{align*}
From KLM formula ($K = 1, L = 0, M = 1$), we have $a(a(l)+1) = b(l)+1 = a(l)+l+1$. 
\\So 
\[
D_2(a(l)+1) = a(a(l)+1)+(2^{n-1}-2)(a(l)+1)-(2^{n-2}-1).
\]
So, since $A \cup (A+1) = \mathbb{N}$, the formula is valid for all $k$.
\end{proof}

\subsection{Fractional Parts Identities with respect to the 3-set Partition}

Let $D$ be column $D_1$, $C$ be column $D_2$, and $S$ be column $D_3$ in the definition of 3-set partition.

From the definition of $h(k)$ and $t(k)$, we already know that $D = \{d(k)\} = \{(2^{n-1}-1)a(k) + k\}$ and that $C = \{c(k)\} = \{d(k)-2^{n-2}\} \cup \{d(k)+2^{n-2}\}$. From Theorem 4.2, we get the formula $c(k) = a(k) + (2^{n-1}-2)k - (2^{n-2}-1)$ for column $C$. So in the 3-set case, we have
\begin{align*}
    D &= \{d(k)\} = \{3a(k)+k\},\\
    C &= \{c(k)\} = \{a(k)+2k-1\},\\
    S &= \{s(k)\} = \{c(k)-1\} \cup \{c(k)+1\}.
\end{align*}
Now we will derive some fractional parts identities with respect to $d(k), c(k),$ and $s(k)$, which will be of use in the investigation of the interaction between the $A, B$ Beatty partition and the 3-set partition.
\begin{theorem}
$\{d(k)\phi\} = 1- \frac{\sqrt{5}}{\phi^{2}}\{k\phi\}$.
\end{theorem}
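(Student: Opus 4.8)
The plan is to pin down $\lfloor d(k)\phi\rfloor$ exactly and then read off the fractional part, working straight from the closed form $d(k) = 3a(k)+k$. First I would substitute $a(k) = k\phi - \{k\phi\}$ and use $\phi^2 = \phi+1$ to expand
\[
d(k)\phi = \bigl(3a(k)+k\bigr)\phi = 3k\phi^2 - 3\phi\{k\phi\} + k\phi = 4k\phi + 3k - 3\phi\{k\phi\}.
\]
Since $3k\in\mathbb{Z}$ and $4k\phi = 4a(k) + 4\{k\phi\}$ with $4a(k)\in\mathbb{Z}$, reducing modulo $1$ leaves
\[
d(k)\phi \equiv 4\{k\phi\} - 3\phi\{k\phi\} = (4-3\phi)\{k\phi\} \pmod 1 .
\]

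The one step that needs care is locating $(4-3\phi)\{k\phi\}$ inside a window of length $1$ so that the floor is forced. Since $1 < \phi < \tfrac53$ we have $-1 < 4-3\phi < 0$, and since $\phi$ is irrational $\{k\phi\}\in(0,1)$ for every $k\ge 1$; hence $(4-3\phi)\{k\phi\}\in\bigl(4-3\phi,\,0\bigr)\subset(-1,0)$. Therefore $\{d(k)\phi\} = 1 + (4-3\phi)\{k\phi\} = 1 - (3\phi-4)\{k\phi\}$, and it only remains to rewrite the coefficient: using $\phi^2 = \tfrac{3+\sqrt5}{2}$ and rationalizing, $3\phi - 4 = \tfrac{3\sqrt5-5}{2} = \dfrac{\sqrt5}{\phi^2}$, which gives exactly $\{d(k)\phi\} = 1 - \dfrac{\sqrt5}{\phi^2}\{k\phi\}$.

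As an alternative to the direct expansion, one can invoke the KLM formula (Theorem 3.1) with $K=3$, $L=1$, $M=0$ and $n=k$, which gives $\lfloor d(k)\phi\rfloor = a(3a(k)+k) = 3b(k) + a(k) + \bigl\lfloor(\phi-3)\tfrac{\{k\phi\}}{\phi}\bigr\rfloor$; since $(\phi-3)/\phi = 4-3\phi\in(-1,0)$ and $\{k\phi\}\in(0,1)$ the floor equals $-1$ for all $k$, and substituting $b(k)=a(k)+k$ reproduces $\lfloor d(k)\phi\rfloor = 4a(k)+3k-1$ and hence the same identity. Either route is essentially a routine computation; there is no real obstacle beyond the sign-and-size estimate $1<\phi<\tfrac53$ that forces the integer part, which in turn certifies that the claimed value indeed lies in $(0,1)$.
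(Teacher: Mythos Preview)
Your proposal is correct, and your alternative via the KLM formula with $K=3$, $L=1$, $M=0$ is exactly the paper's proof; your primary route is simply that same computation carried out by hand after substituting $a(k)=k\phi-\{k\phi\}$, so the two arguments are essentially identical. One tiny slip: the bound $1<\phi$ alone only yields $4-3\phi<1$, not $4-3\phi<0$; you need $\phi>\tfrac{4}{3}$ for the sign, though of course this is immediate from the value of $\phi$.
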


\begin{proof}

Using the identity $\phi ^2 = \phi + 1$ and the KLM formula with $K = 3, L = 1, M = 0$, we can write
\begin{align*}
\{d(k)\phi\} &= d(k)\phi - \lfloor d(k)\phi \rfloor = (3a(k)+k)\phi - a(3a(k)+k)\\
&=3a(k)\phi + k\phi -3b(k)-a(k) - \lfloor (\phi -3)\cdot \frac{\{k\phi \}}{\phi } \rfloor.
\end{align*}
Now, since $\frac{\phi -3}{\phi } < (\phi -3)\cdot \frac{\{k\phi \}}{\phi} < 0$, we have that $\lfloor (\phi -3)\cdot \frac{\{k\phi \}}{\phi } \rfloor = -1$. This gives
\begin{align*}
\{d(k)\phi)\} &= 3(k\phi -\{k\phi \})\phi + \{k\phi \} -3(k\phi -\{ k\phi \})-3k+1\\
&= \{k\phi \}(4-3\phi )+1 = 1-\frac{\sqrt{5}}{\phi ^2}\{k\phi \}.
\end{align*}
\end{proof}

\begin{theorem}
$$\{c(k)\phi\} =\left\{ \begin{array}{ll}
	   \frac{\sqrt{5}}{\phi}\{k\phi\}+\frac{1-\sqrt{5}}{2},  &  \text{ if } \{k\phi\} > \frac{1}{\sqrt{5}},\\
	    \frac{\sqrt{5}}{\phi}\{k\phi\}+\frac{3-\sqrt{5}}{2}, & \text{ if } \{k\phi\} < \frac{1}{\sqrt{5}}.
	\end{array}\right.$$
\end{theorem}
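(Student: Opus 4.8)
The plan is to mirror the proof of Theorem 4.3. Writing $c(k) = 1\cdot a(k) + 2\cdot k + (-1)$, I apply the KLM formula (Theorem 3.1) with $K=1$, $L=2$, $M=-1$ to the integer $c(k)$ — legitimate since that formula holds for all integers $K,L,M$ — obtaining
\[
a(c(k)) = b(k) + 2a(k) + \Big\lfloor -\phi + (2\phi-1)\tfrac{\{k\phi\}}{\phi}\Big\rfloor .
\]
Using the elementary identity $(2\phi-1)/\phi = 2-(\phi-1) = 3-\phi$ together with $b(k) = a(k)+k$ (immediate from $b(k) = \lfloor k\phi+k\rfloor$, or from Lemma 3.3), this simplifies to $a(c(k)) = 3a(k) + k + \lfloor (3-\phi)\{k\phi\} - \phi\rfloor$.

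Next I compute $\{c(k)\phi\} = c(k)\phi - \lfloor c(k)\phi\rfloor = c(k)\phi - a(c(k))$. Substituting $a(k)\phi = k\phi^2 - \phi\{k\phi\} = k(\phi+1)-\phi\{k\phi\}$ and $k\phi = a(k)+\{k\phi\}$, a one-line computation gives $c(k)\phi - 3a(k) - k = (3-\phi)\{k\phi\} - \phi$, hence
\[
\{c(k)\phi\} = (3-\phi)\{k\phi\} - \phi - \big\lfloor (3-\phi)\{k\phi\} - \phi\big\rfloor ,
\]
i.e. $\{c(k)\phi\} = \{(3-\phi)\{k\phi\}-\phi\}$, as it must be. Everything now hinges on the value of the floor $\lfloor (3-\phi)\{k\phi\} - \phi\rfloor$.

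The key point is the identity $3-\phi = \sqrt5/\phi = (5-\sqrt5)/2$, which also gives $\phi(3-\phi) = 2\phi-1 = \sqrt5$. As $\{k\phi\}$ ranges over $(0,1)$, the quantity $(3-\phi)\{k\phi\} - \phi$ ranges over the open interval $(-\phi,\,3-2\phi)$, which is contained in $(-2,0)$; thus the floor is either $-2$ or $-1$. It equals $-1$ exactly when $(3-\phi)\{k\phi\}-\phi \ge -1$, i.e. when $\{k\phi\} \ge (\phi-1)/(3-\phi) = 1/\sqrt5$ (using $\phi-1 = 1/\phi$ and $3-\phi = \sqrt5/\phi$), and it equals $-2$ otherwise. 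Since $\{k\phi\}$ is irrational it is never exactly $1/\sqrt5$ (equality would force $\sqrt5\cdot\tfrac{5k-2}{10}\in\mathbb{Z}$, impossible for integer $k$), so the two cases are exhaustive. Inserting the floor value $-1$ into the displayed expression and simplifying with $1-\phi = (1-\sqrt5)/2$ yields $\{c(k)\phi\} = \frac{\sqrt5}{\phi}\{k\phi\} + \frac{1-\sqrt5}{2}$ when $\{k\phi\}>1/\sqrt5$; inserting $-2$ and using $2-\phi = (3-\sqrt5)/2$ yields $\{c(k)\phi\} = \frac{\sqrt5}{\phi}\{k\phi\} + \frac{3-\sqrt5}{2}$ when $\{k\phi\}<1/\sqrt5$, which is the assertion.

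There is no genuine obstacle: this is a direct KLM computation followed by a single case split on a floor, in the same style as Theorem 4.3. The only thing demanding care is bookkeeping of the several elementary $\phi$-identities ($3-\phi = \sqrt5/\phi$, $\phi(3-\phi) = \sqrt5$, $1-\phi = (1-\sqrt5)/2$, $2-\phi = (3-\sqrt5)/2$) and confirming that the floor transitions exactly at $\{k\phi\} = 1/\sqrt5$. (One could instead try to derive this from Theorem 4.3 via $C = \{d(k)-2\}\cup\{d(k)+2\}$, but that is less convenient because it requires tracking which $d$-index produces a given element of $C$.)
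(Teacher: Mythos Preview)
Your proof is correct and follows essentially the same approach as the paper: apply the KLM formula with $K=1$, $L=2$, $M=-1$, reduce to evaluating $\lfloor (3-\phi)\{k\phi\}-\phi\rfloor$, and split into the two cases according to whether $\{k\phi\}$ lies above or below $1/\sqrt5$. The only differences are cosmetic (you explicitly note that $\{k\phi\}\ne 1/\sqrt5$, and you organize the algebra by first isolating $a(c(k))$), but the argument is the same.
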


\begin{proof}

Using the KLM formula where $K = 1, L = 2, M = -1$,
\begin{align*}
    \{c(k)\phi\} &= c(k)\phi - \lfloor c(k)\phi \rfloor = (a(k)+2k-1)\phi -a(a(k)+2k-1)\\
    &= a(k)\phi + 2k\phi - \phi - b(k) -2a(k) - \lfloor -\phi + (2\phi -1)\cdot \frac{\{k\phi \}}{\phi } \rfloor
\end{align*}
Since $-\phi < -\phi + (2\phi -1)\cdot \frac{\{k\phi \}}{\phi } < 2-\phi - \frac{1}{\phi } = 3 - 2\phi $, where $-\phi = -1.6180339\ldots$ and $3-2\phi = -0.2360679\ldots $, we need to consider two cases: 
$-\phi + (2\phi -1)\cdot \frac{\{k\phi \}}{\phi } \in (-\phi , -1)$ and $-\phi + (2\phi -1)\cdot \frac{\{k\phi \}}{\phi } \in (-1, 3-2\phi )$, which are equivalent to $\{k\phi \} \in (0, \frac{1}{\sqrt{5}})$ and $\{k\phi \} \in (\frac{1}{\sqrt{5}}, 1)$ respectively.
\\In the first case, we have $ \lfloor -\phi + (2\phi -1)\cdot \frac{\{k\phi \}}{\phi } \rfloor = -2$, so
\begin{align*}
    \{c(k)\phi \} &= (k\phi - \{k\phi \})\phi + 2k\phi - \phi -(k\phi - \{k\phi \}) - k - 2(k\phi -\{k\phi \}) + 2\\
    &= (3-\phi )\{k\phi \} +2-\phi = \frac{\sqrt{5}}{\phi }\{k\phi \} + \frac{3-\sqrt{5}}{2}.
\end{align*}
In the second case, we have $\lfloor -\phi + (2\phi -1)\cdot \frac{\{k\phi \}}{\phi } \rfloor = -1$, so $\{c(k)\phi \} = \frac{\sqrt{5}}{\phi }\{k\phi \}+\frac{3-\sqrt{5}}{2}-1$, and the result follows.
\end{proof}

\begin{corollary}
$$\{c(n)\phi\} + \phi\{d(n)\phi\} =\left\{ \begin{array}{ll}
	   1,  &  \text{ if } \{n\phi\} > \frac{1}{\sqrt{5}},\\
	    2, & \text{ if } \{n\phi\} < \frac{1}{\sqrt{5}}.
	\end{array}\right.$$
\end{corollary}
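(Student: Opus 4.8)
The plan is to substitute the closed forms provided by Theorems 4.4 and 4.5 directly into the left-hand side and observe that the $\{n\phi\}$-dependent terms cancel, leaving a case-dependent constant that simplifies to $1$ or $2$.

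First I would record, from Theorem 4.4, that
\[
\phi\{d(n)\phi\} = \phi\left(1 - \frac{\sqrt{5}}{\phi^{2}}\{n\phi\}\right) = \phi - \frac{\sqrt{5}}{\phi}\{n\phi\},
\]
valid for every $n$ with no case distinction. Then I would add this to $\{c(n)\phi\}$ as given by Theorem 4.5, treating the two branches separately. In either branch the term $\frac{\sqrt{5}}{\phi}\{n\phi\}$ occurs with opposite sign and cancels exactly, so that
\[
\{c(n)\phi\} + \phi\{d(n)\phi\} =
\begin{cases}
\dfrac{1-\sqrt{5}}{2} + \phi, & \{n\phi\} > \dfrac{1}{\sqrt{5}},\\[2ex]
\dfrac{3-\sqrt{5}}{2} + \phi, & \{n\phi\} < \dfrac{1}{\sqrt{5}}.
\end{cases}
\]

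Finally I would evaluate these constants using $\phi = \frac{1+\sqrt{5}}{2}$: the first is $\frac{1-\sqrt{5}}{2} + \frac{1+\sqrt{5}}{2} = 1$ and the second is $\frac{3-\sqrt{5}}{2} + \frac{1+\sqrt{5}}{2} = 2$, which is exactly the asserted identity. For completeness I would remark that the dichotomy $\{n\phi\} \gtrless \frac{1}{\sqrt{5}}$ is exhaustive: equality $\{n\phi\} = \frac{1}{\sqrt{5}}$ would force $n\phi - \frac{1}{\sqrt{5}} \in \mathbb{Z}$, hence (isolating the irrational part) $\frac{n}{2} = \frac{1}{5}$, which is impossible for $n \in \mathbb{N}$; so the same case split used in Theorem 4.5 covers all $n$.

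There is essentially no obstacle here—the result is a formal consequence of the two preceding theorems. The only points requiring care are keeping the $\phi$ versus $\phi^{2}$ in the denominators straight so that the cancellation of the $\{n\phi\}$ terms is exact, and correctly matching the branch hypothesis of Theorem 4.5 (Theorem 4.4 being uniform in $n$).
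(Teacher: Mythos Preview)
Your argument is correct and is exactly the intended derivation: the paper states this as an immediate corollary of the two preceding theorems, and your substitution-and-cancellation is precisely that. The only slip is bibliographic---the results you invoke are Theorems 4.3 and 4.4 in the paper's numbering (the corollary itself is 4.5), not 4.4 and 4.5.
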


\begin{theorem}

$\{s(k)\phi\} = \frac{\sqrt{5}}{2\phi}\{k\phi\}+b$, where $b\in \{0, \frac{1-\sqrt{5}}{4}, \frac{5-\sqrt{5}}{4}\}$ if $k$ is even, and $b \in \{-\frac{1}{2}, \frac{1}{2}, \frac{3-\sqrt{5}}{4}, 1-\frac{\sqrt{5}}{2}, 2-\frac{\sqrt{5}}{2}\}$ if $k$ is odd. In each case, the value of $b$ depends on the value of $\{k\phi\}$.

\end{theorem}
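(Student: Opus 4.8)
The plan is to reduce the statement to Theorem 4.4 (the formula for $\{c(k)\phi\}$) by two successive changes of variable. First I would pin down the structure of the enumerating sequence $s$. Since $c(k+1)-c(k) = \bigl(a(k+1)-a(k)\bigr)+2 \geq 3$, the two-element blocks $\{c(k)-1,\,c(k)+1\}$ are strictly increasing and occur in $S$ in their natural order, so $s(2j-1)=c(j)-1$ and $s(2j)=c(j)+1$. Using $\phi-1=\tfrac1\phi$ this immediately gives
\[
\{s(2j-1)\phi\} = \bigl\{c(j)\phi - \tfrac1\phi\bigr\}, \qquad \{s(2j)\phi\} = \bigl\{c(j)\phi + \tfrac1\phi\bigr\}.
\]

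Second, I would insert the two-branch formula for $\{c(j)\phi\}$ from Theorem 4.4, whose branch is governed by whether $\{j\phi\}$ exceeds $\tfrac1{\sqrt5}$. In each branch $\{c(j)\phi\}$ is a known affine function of $\{j\phi\}$ of slope $\tfrac{\sqrt5}{\phi}$ confined to an explicit subinterval of $(0,1)$; adding or subtracting $\tfrac1\phi$ and reducing modulo $1$ therefore introduces at most one further split (whether the shifted quantity crosses $0$ or $1$, which is decidable precisely because we kept the range of $\{c(j)\phi\}$ explicit), and in each resulting case one gets $\{s(k)\phi\} = \tfrac{\sqrt5}{\phi}\{j\phi\} + (\text{const})$.

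Third, I would convert $\{j\phi\}$ to $\{k\phi\}$. For even $k=2j$ one has $\{k\phi\}=\{2\{j\phi\}\}$, so $\{k\phi\}=2\{j\phi\}$ when $\{j\phi\}<\tfrac12$ and $\{k\phi\}=2\{j\phi\}-1$ when $\{j\phi\}>\tfrac12$; for odd $k=2j-1$ one has $\{k\phi\}=\{2\{j\phi\}-\phi\}$, which breaks into three subcases according as $\{j\phi\}$ lies below $\tfrac1{2\phi}$, between $\tfrac1{2\phi}$ and $\tfrac\phi2$, or above $\tfrac\phi2$, giving $\{j\phi\}=\tfrac12(\{k\phi\}+e)$ with $e\in\{-\phi^{-2},\ \phi^{-1},\ \phi\}$ respectively. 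Substituting halves the slope to $\tfrac{\sqrt5}{2\phi}$ and, after routine simplification using $\phi^2=\phi+1$, produces exactly the asserted list of constants $b$: three values when $k$ is even and five when $k$ is odd, with the value applicable in each case read off from which subinterval contains $\{k\phi\}$.

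\textbf{Main obstacle.} The only real difficulty is organizational bookkeeping: one must verify that the nested case splits — parity of $k$, the Theorem 4.4 branch, whether the $\pm\tfrac1\phi$ shift crosses an integer, and the Beatty subcase for $\{k\phi\}$ — exhaust all possibilities without overlap, that the breakpoints $\tfrac1{\sqrt5},\ \tfrac12,\ \tfrac1{2\phi},\ \tfrac\phi2$ are never attained by $\{j\phi\}$ (immediate from the irrationality of $\phi$: matching the $\sqrt5$-coefficient would force $\tfrac j2$ to equal $0$, $\pm\tfrac14$, or $\tfrac15$, none of which is a half-integer for $j\geq 1$), and that each of the finitely many resulting affine expressions collapses to one of the listed values of $b$. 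Once the ranges of $\{c(j)\phi\}$ are tracked carefully in each branch, the remaining arithmetic is mechanical golden-ratio manipulation.
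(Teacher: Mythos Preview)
Your proposal is correct and follows essentially the same architecture as the paper: both identify $s(2j-1)=c(j)-1$ and $s(2j)=c(j)+1$, express $\{s(k)\phi\}$ as an affine function of $\{j\phi\}$, and then convert $\{j\phi\}$ to $\{k\phi\}$ via the same half-index identities (the paper's Lemma 4.7). The only tactical difference is that the paper reapplies the KLM formula directly to $a(j)+2j$ and $a(j)+2j-2$, whereas you reuse Theorem 4.4 and shift by $\pm\tfrac1\phi$; this yields the same case splits and the same list of constants $b$.
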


For the proof, we need to write $s(k)$ as a function of $c(k)$. Define $\Delta$ as the gap function of column $C$ such that
\[
\Delta (k) = c(k+1)-c(k) = \lfloor (k+1)\phi \rfloor - \lfloor k\phi \rfloor + 2.
\]
Then since $\lfloor (k+1)\phi \rfloor - \lfloor k\phi \rfloor \in (\phi - 1, \phi + 1)$ and $\Delta (k) \in \mathbb{N}$, we have $\Delta (k)$ = 3 or 4. So $c(k+1) - 1 > c(k) + 1$, which means $\{c(k)-1\} \cap \{c(k)+1\} = \emptyset.$ Now since $S = \{s(k)\} = \{c(k)-1\} \cup \{c(k)+1\}$, each two of the elements in $S$ are exactly generated by one element in $C$, and we have
$$s(k) = \left\{ \begin{array}{ll}
	   c(\frac{k}{2})+1,  &  \text{ if } k \text{ is even},\\[0.2em]
	    c(\frac{k+1}{2})-1, & \text{ if } k \text{ is odd}.
	\end{array}\right.$$

So we can see that to obtain the fractional properties of $\{s(k)\phi \}$, we need to investigate the fractional properties of $\{\frac{k}{2}\phi\}$ and $\{\frac{k+1}{2}\phi\}$, which can be written in the form of $\{k\phi \}$. These are established in the following lemma.
\begin{lemma}

When $k$ is an odd positive integer, $\{\frac{k+1}{2}\phi\} =  \frac{1}{2}\phi+\frac{1}{2}\{k\phi\}$ or $\frac{1}{2}(\phi-1)+\frac{1}{2}\{k\phi\}$ or  $\frac{1}{2}(\phi -2)+\frac{1}{2}\{k\phi\}$.
When $k$ is an even positive integer, $\{\frac{k}{2}\phi\} =  \frac{1}{2}\{k\phi\}$ or $\frac{1}{2}(1+\{k\phi\})$.
\end{lemma}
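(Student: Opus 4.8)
The plan is to handle both halves of the statement through the single identity $\{2m\phi\} = \{\,2\{m\phi\}\,\}$, which holds for every $m\in\mathbb{N}$ because $2m\phi = 2\lfloor m\phi\rfloor + 2\{m\phi\}$ and $2\lfloor m\phi\rfloor$ is an integer. Since $\phi$ is irrational, $\{m\phi\}\neq\tfrac12$, so $2\{m\phi\}$ lies either in $(0,1)$ or in $(1,2)$; consequently $\{m\phi\} = \tfrac12\{2m\phi\}$ if $\{m\phi\}<\tfrac12$ and $\{m\phi\} = \tfrac12\bigl(1+\{2m\phi\}\bigr)$ if $\{m\phi\}>\tfrac12$. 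Thus, once $\{2m\phi\}$ has been rewritten in terms of $\{k\phi\}$, the fractional part $\{m\phi\}$ is forced to be one of two explicit affine functions of $\{k\phi\}$.

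For even $k$ I would write $k=2m$, so that $\tfrac{k}{2}\phi = m\phi$ and $\{k\phi\} = \{2m\phi\}$ directly. The dichotomy above then gives $\bigl\{\tfrac{k}{2}\phi\bigr\} = \tfrac12\{k\phi\}$ or $\tfrac12\bigl(1+\{k\phi\}\bigr)$, which is exactly the even-$k$ assertion.

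For odd $k$ I would write $k=2m-1$, so that $\tfrac{k+1}{2}\phi = m\phi$ and $2m\phi = k\phi + \phi$. Using $\phi = 1 + (\phi-1)$ with $\phi-1 = \tfrac1\phi\in(0,1)$ and reducing modulo $1$ gives $\{2m\phi\} = \bigl\{\{k\phi\} + (\phi-1)\bigr\}$. The quantity $\{k\phi\}+(\phi-1)$ lies in $(\phi-1,\phi)\subset(0,2)$ and passes the integer $1$ precisely when $\{k\phi\}$ passes $2-\phi = \tfrac1{\phi^2}$; hence $\{2m\phi\} = \{k\phi\}+\phi-1$ when $\{k\phi\}<2-\phi$ and $\{2m\phi\} = \{k\phi\}+\phi-2$ when $\{k\phi\}>2-\phi$, both of which already lie in $(0,1)$. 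Substituting each of these two expressions into the two branches of the first paragraph yields the four affine functions $\tfrac\phi2+\tfrac12\{k\phi\}$, $\tfrac12(\phi-1)+\tfrac12\{k\phi\}$ (which appears for two of the four combinations), and $\tfrac12(\phi-2)+\tfrac12\{k\phi\}$, and these are exactly the three values listed for odd $k$.

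The computations are elementary throughout, so I expect the only delicate point to be the bookkeeping: tracking which inequality on $\{k\phi\}$ (equivalently on $\{m\phi\}$) selects which branch, and verifying that the thresholds $\tfrac12$ and $2-\phi$ are never attained — which holds because $\phi$ is irrational — so that every interval in the case split is open and no possibility is either lost or double-counted.
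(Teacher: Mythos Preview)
Your proposal is correct and follows essentially the same approach as the paper: both arguments rest on the identity relating $\{k\phi\}$ to $\{m\phi\}$ via doubling, where $m=\tfrac{k}{2}$ or $m=\tfrac{k+1}{2}$. The only organizational difference is that for odd $k$ the paper writes $\{k\phi\}=\{2\{m\phi\}-(\phi-1)\}$ and splits directly into three intervals, whereas you first compute $\{2m\phi\}$ from $\{k\phi\}$ (two cases) and then halve (two cases), obtaining four branches of which two coincide; the arithmetic is identical.
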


\begin{proof}

When $k$ is an odd positive integer, since $\{k\phi\} = \{\frac{k+1}{2}\phi+\frac{k+1}{2}\phi-\phi\} = \{2\{\frac{k+1}{2}\phi\}-\frac{\sqrt{5}-1}{2}\}$, we need to consider the following three cases:
\[
2\{\frac{k+1}{2}\phi\}-\frac{\sqrt{5}-1}{2} \in (1, 2) \text{ or } (0, 1) \text{ or } (-1, 0).
\]
In the first case, where $2\{\frac{k+1}{2}\phi\}-\frac{\sqrt{5}-1}{2} \in (1, 2)$, we have $\{\frac{k+1}{2}\phi\} = \frac{1}{2}\phi+\frac{1}{2}\{k\phi\}$. In the second case, where $2\{\frac{k+1}{2}\phi\}-\frac{\sqrt{5}-1}{2} \in (0, 1)$, we have $\{\frac{k+1}{2}\phi\} = \frac{1}{2}(\phi-1)+\frac{1}{2}\{k\phi\}$. In the third case, where $2\{\frac{k+1}{2}\phi\}-\frac{\sqrt{5}-1}{2} \in (-1, 0)$, we have $\{\frac{k+1}{2}\phi\} = \frac{1}{2}(\phi -2)+\frac{1}{2}\{k\phi\}$.

When $k$ is an even positive integer, $\{k\phi\} = \{\frac{k}{2}\phi + \frac{k}{2}\phi\} =  \{2\{\frac{k}{2}\phi\}\}$, and we need to consider the following two cases:
\[
\{\frac{k}{2}\phi\} \in (0, \frac{1}{2}), \text{ and } \{\frac{k}{2}\phi\} \in (
\frac{1}{2}, 1).
\]
In the first case, $\{\frac{k}{2}\phi\} = \frac{1}{2}\{k\phi\}$. In the second case,  $\{\frac{k}{2}\phi\} = \frac{1}{2}(1+\{k\phi\})$.
\end{proof}

Now we can proceed to the proof of Theorem 4.6. We need to consider two cases: when $k$ is an odd positive integer and when $k$ is an even positive integer.
\begin{case}
Let $k$ be even. Let $n$ be $\frac{k}{2}$. Using the KLM formula with $K = 1, L = 2, M = 0$, we have
\begin{align*}
    \{s(k)\phi\} &= \{(c(k/2)+1)\phi \} = (c(n)+1)\phi - a(c(n)+1) = (a(n)+2n)\phi - a(a(n)+2n)
    \\
    &= a(n)\phi + 2n\phi - b(n) - 2a(n) - \lfloor (2\phi -1)\cdot \{n\phi \}/\phi \rfloor \text{ } ((2\phi -1)\cdot \{n\phi \}/\phi = \{n\phi \}(3-\phi ))
    \\
    &= (n\phi - \{n\phi \})\phi + 2n\phi - (n\phi - \{n\phi \}) - n -2(n\phi - \{n\phi \})- \lfloor  \{n\phi \}(3-\phi ) \rfloor 
    \\
    &= -\{n\phi \}\phi + 3\{n\phi \} - (\{n\phi \}(3-\phi ) - \{\{n\phi \}(3-\phi )\})\\
    &= \{\{n\phi \}(3-\phi )\} = \{2\{n\phi \} + (1-\phi )\{n\phi \} \}
    \\
    &= \{\{2n\phi \}+(1-\phi )\{n\phi \} \} = \{\{k\phi \}+(1-\phi)\{\phi \cdot k/2 \} \}.
\end{align*}
From Lemma 4.7 we know that when $k$ is even, $\{\frac{k}{2}\phi\} =  \frac{1}{2}\{k\phi\} \text{ or } \frac{1}{2}(1+\{k\phi\})$.
\begin{subcase}
$\{\frac{k}{2}\phi\} = \frac{1}{2}\{k\phi\}$. Then 
\\$\{s(k)\phi\} = \{\{k\phi\}+(1-\phi)\frac{\{k\phi\}}{2}\} = \{\frac{5-\sqrt{5}}{4}\{k\phi\}+0\} = \frac{\sqrt{5}}{2\phi}\{k\phi\}+0$.
\end{subcase}
\begin{subcase}
$\{\frac{k}{2}\phi\}$ = $\frac{1}{2}(1+\{k\phi\})$. Then
\\$\{s(k)\phi\} = \{\{k\phi\}+(1-\phi)\frac{1+\{k\phi\}}{2}\} = \{\frac{5-\sqrt{5}}{4}\{k\phi\}+\frac{1-\sqrt{5}}{4}\}$. If $\{k\phi \} \in (\frac{1}{\sqrt{5}}, 1)$, then $\{s(k)\phi \}$ = $\frac{\sqrt{5}}{2\phi}\{k\phi \}+\frac{1-\sqrt{5}}{4}$. If $\{k\phi \} \in (0, \frac{1}{\sqrt{5}})$, then $\{s(k)\phi \}$ = $\frac{\sqrt{5}}{2\phi}\{k\phi \}+\frac{5-\sqrt{5}}{4}$.
\end{subcase}
To summarize, when $k$ is even, $\{s(k)\phi\} = \frac{\sqrt{5}}{2\phi}\{k\phi\}+b$, where $b\in \{0, \frac{1-\sqrt{5}}{4}, \frac{5-\sqrt{5}}{4}\}$.
\end{case}

\begin{case}
Let $k$ be odd. Let $n=\frac{k+1}{2}$, then by KLM formula where $K = 1, L = 2, M = -2$, we have
\begin{align*}
\{s(k)\phi\} &= \{c(\frac{k+1}{2})-1)\phi \} = \{(a(n)+2n-2)\phi \} = (a(n)+2n-2)\phi - a(a(n)+2n-2)\\
&= a(n)\phi + 2n\phi -2\phi - b(n) - 2a(n) - \lfloor -2\phi + (2\phi -1)\cdot \{n\phi \}/\phi \rfloor \\
&= a(n)\phi + 2n\phi -2\phi - b(n) - 2a(n) - \lfloor -2\phi + \{n\phi \}(3-\phi ) \rfloor \\
&= -\{n\phi \}\phi - 2\phi + 3\{n\phi \} - (-2\phi + \{n\phi \}(3-\phi ) - \{-2\phi + \{n\phi \}(3-\phi ) \})\\
&= \{-2\phi + \{n\phi \}(3-\phi ) \} = \{-2\phi + \{n\phi \}(2-1/\phi ) \} = \{-\phi + \{(2n-1)\phi \} - (1/\phi) \{n\phi \} \}\\
&= \{\frac{1}{\phi ^2} + \{k\phi \} - \frac{1}{\phi }\{\frac{k+1}{2}\phi \} \} = \{\frac{3-\sqrt{5}}{2} + \{k\phi \} + \frac{1-\sqrt{5}}{2}\{\frac{k+1}{2}\phi \} \}.
\end{align*}
From Lemma 4.7 we know that when $k$ is odd, $\{\frac{k+1}{2}\phi\} =  \frac{1}{2}\phi+\frac{1}{2}\{k\phi\} \text{ or }  \frac{1}{2}(\phi-1)+\frac{1}{2}\{k\phi\} \text{ or }  \frac{1}{2}(\phi -2)+\frac{1}{2}\{k\phi\}$.
\begin{subcase}
$\{\frac{k+1}{2}\phi\}$ = $\frac{1}{2}\phi+\frac{1}{2}\{k\phi\}$. Then \begin{align*}
    \{s(k)\phi \} &= \{\frac{3-\sqrt{5}}{2}+\{k\phi \} + \frac{1-\sqrt{5}}{2}(\frac{1}{2}\phi +\frac{1}{2}\{k\phi \}) \} \\
    &= \{ \frac{5-\sqrt{5}}{4}\{k\phi \}+\frac{3-\sqrt{5}}{2}-\frac{1}{2}\} = \{\frac{5-\sqrt{5}}{4}\{k\phi \}+1-\frac{\sqrt{5}}{2}\}.
\end{align*}
If $\frac{5-\sqrt{5}}{4}\{k\phi \}+1-\frac{\sqrt{5}}{2} > 0$, then $\{s(k)\phi\}$ = $\frac{\sqrt{5}}{2\phi }\{k\phi \}+1-\frac{\sqrt{5}}{2}.$ If $\frac{5-\sqrt{5}}{4}\{k\phi \}+1-\frac{\sqrt{5}}{2} < 0$, then $\{s(k)\phi\}$ = $\frac{\sqrt{5}}{2\phi }\{k\phi \}+2-\frac{\sqrt{5}}{2}.$
\end{subcase}
\begin{subcase}
$\{\frac{k+1}{2}\phi\}$ = $\frac{1}{2}(\phi-1)+\frac{1}{2}\{k\phi\}$. Then \begin{align*}
    \{s(k)\phi \} &= \{\frac{3-\sqrt{5}}{2}+\{k\phi \} + \frac{1-\sqrt{5}}{2}(\frac{1}{2}(\phi -1)+\frac{1}{2}\{k\phi \}) \} \\
    &= \{\frac{5-\sqrt{5}}{4}\{k\phi \}+\frac{3-\sqrt{5}}{2}+\frac{1-\sqrt{5}}{2}\cdot \frac{\sqrt{5}-1}{4}\} = \{\frac{5-\sqrt{5}}{4}\{k\phi \}+\frac{3-\sqrt{5}}{4}\} \\
    &= \frac{\sqrt{5}}{2\phi }\{k\phi \} + \frac{3-\sqrt{5}}{4}.
\end{align*}
\end{subcase}
\begin{subcase}
$\{\frac{k+1}{2}\phi\}$ = $\frac{1}{2}(\phi -2)+\frac{1}{2}\{k\phi\}$. Then
\begin{align*}
    \{s(k)\phi \} &= \{\frac{3-\sqrt{5}}{2}+\{k\phi \} + \frac{1-\sqrt{5}}{2}(\frac{1}{2}\{k\phi \}-\frac{2-\phi}{2}) \} \\
    &= \{\frac{5-\sqrt{5}}{4}\{k\phi \}+\frac{3-\sqrt{5}}{2}+\frac{\sqrt{5}-1}{2}\cdot \frac{3-\sqrt{5}}{4}\} = \{\frac{5-\sqrt{5}}{4}\{k\phi \}+\frac{1}{2}\}.
\end{align*}
If $\frac{5-\sqrt{5}}{4}\{k\phi \}+\frac{1}{2} < 1$, then $\{s(k)\phi \}$ = $\frac{\sqrt{5}}{2\phi }\{k\phi \}+\frac{1}{2}$. If $\frac{5-\sqrt{5}}{4}\{k\phi \}+\frac{1}{2} > 1$, then $\{s(k)\phi \}$ = $\frac{\sqrt{5}}{2\phi }\{k\phi \}-\frac{1}{2}$.
\end{subcase}
To summarize, when $k$ is odd, $\{s(k)\phi\} = \frac{\sqrt{5}}{2\phi}\{k\phi\}+b$, where $b \in \{-\frac{1}{2}, \frac{1}{2}, \frac{3-\sqrt{5}}{4}, 1-\frac{\sqrt{5}}{2}, 2-\frac{\sqrt{5}}{2}\}$. This proves the theorem.
\end{case}

\subsection{Main Theorem of Column Classifications}

Now with the above results of fractional parts identities, we can investigate how the elements of $S, C, D$ are distributed among the sets $A, B$. The following result illuminates this subject.

\begin{theorem}
By classifying the three columns $S, C, D$ of the 3-set partition into $A$ and $B$ numbers, we only have 6 out of 8 possible classifications for each row of $(S, C, D)$: 
\\$\{(A, A, A), (A, A, B), (A, B, A), (B, A, A), (B, B, A), (B, A, B)\}$.
\end{theorem}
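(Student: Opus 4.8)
The plan is to convert the $A/B$-classification of an entire row $(s(k),c(k),d(k))$ into elementary conditions on the single real number $x_k:=\{k\phi\}$. The starting point is a membership criterion extracted from Theorem 3.2: since $\{a(n)\phi\}=1-\{n\phi\}/\phi\in(1/\phi^2,1)$ and $\{b(n)\phi\}=\{n\phi\}/\phi^2\in(0,1/\phi^2)$ for all $n$, and $A,B$ partition $\mathbb N$, we get
\[
    m\in A \iff \{m\phi\}>\tfrac1{\phi^2}, \qquad m\in B \iff \{m\phi\}<\tfrac1{\phi^2}
\]
(the value $1/\phi^2$ is never attained, since $\{m\phi\}=\tfrac{3-\sqrt5}{2}$ would force $(m+1)\sqrt5\in\mathbb Q$). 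Feeding Theorems 4.3, 4.4 and 4.6 into this criterion turns ``$d(k)\in B$'', ``$c(k)\in B$'' and ``$s(k)\in B$'' into explicit conditions on $x_k$ (and, for $s$, also on the parity of $k$).

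First I would dispose of the two forbidden classifications. By Theorem 4.3, $\{d(k)\phi\}=1-\frac{\sqrt5}{\phi^2}x_k$, and a one-line computation gives $d(k)\in B \iff x_k>\frac{\phi}{\sqrt5}$. By Theorem 4.4, when $x_k<1/\sqrt5$ one has $\{c(k)\phi\}>1/\phi^2$ so $c(k)\in A$, while when $x_k>1/\sqrt5$ one finds $c(k)\in B \iff x_k<\frac{\phi}{\sqrt5}$; hence $c(k)\in B \iff x_k\in(1/\sqrt5,\phi/\sqrt5)$. Thus $c(k)\in B$ forces $x_k<\phi/\sqrt5$ while $d(k)\in B$ forces $x_k>\phi/\sqrt5$, so $(c(k),d(k))=(B,B)$ can never occur. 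Consequently the classifications $(A,B,B)$ and $(B,B,B)$ are excluded, and at most the six listed triples remain.

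It then remains to show that each of the six listed triples is realized. The cleanest way is to produce explicit witnesses: from the enumerations of $D=D_1$, $C=D_2$, $S=D_3$ in Example 2.9, together with the lists $A=\{1,3,4,6,8,9,11,\dots\}$ and $B=\{2,5,7,10,13,15,\dots\}$, one checks directly that the rows indexed by $k=2,8,1,10,4,3$ give respectively the classifications $(A,A,A)$, $(A,A,B)$, $(A,B,A)$, $(B,A,A)$, $(B,B,A)$, $(B,A,B)$. An alternative, which also produces the refined picture relevant to the (open) frequency question, is to combine Theorem 4.6 with the criterion above: along each residue class of $k$ modulo $2$ the sequence $x_k=\{k\phi\}$ equals $\{m(2\phi)\}$ or $\{m(2\phi)-\phi\}$ and is therefore equidistributed in $(0,1)$, so a given triple occurs precisely when the set of $x$-values it corresponds to — cut out by the thresholds $1/\phi^2,\ 1/\sqrt5,\ \phi/\sqrt5$ together with the breakpoints appearing in Theorem 4.6 — has positive length; carrying this out recovers exactly the six triples above.

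The main obstacle lies entirely in this last step if one takes the systematic route: Theorem 4.6 splits into five subcases for odd $k$ and three for even $k$, and for each subcase one must first recover from the proof of Theorem 4.6 (via Lemma 4.7) the sub-interval of $x$-values on which it is valid before intersecting with the $A/B$-thresholds to read off the class of $s(k)$. This is a routine but bookkeeping-heavy computation; the explicit-example approach bypasses it entirely, at the cost of not exhibiting the finer interval structure.
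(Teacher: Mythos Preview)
Your proposal is correct and follows essentially the same route as the paper: use the criterion $m\in A\iff\{m\phi\}>1/\phi^2$ from Theorem~3.2, feed in Theorems~4.3 and~4.4 to see that $d(k)\in B\iff\{k\phi\}>\frac{5+\sqrt5}{10}=\phi/\sqrt5$ and $c(k)\in B\iff\{k\phi\}\in(1/\sqrt5,\phi/\sqrt5)$, and conclude that $(c(k),d(k))=(B,B)$ is impossible, ruling out $(A,B,B)$ and $(B,B,B)$.

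The one point where you go further than the paper is existence: the paper's proof simply says ``$s(k)$ can be either in $A$ or in $B$'' and stops, relying on Table~3 (which in fact only exhibits four of the six triples) without verifying that each of the three surviving $(c,d)$-pairs actually occurs with both values of $s$. Your explicit witnesses $k=2,8,1,10,4,3$ close that gap cleanly and make the ``exactly six'' reading of the statement rigorous; this is a small but genuine improvement over the paper's argument.
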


\begin{proof}
\begin{table}[h]
\parbox{.48\linewidth}{
    \centering
    \begin{tabular}{c|c|c}
        $S$ & $C$ & $D$ \\ \hline
        1 & 2 & 4 \\
        3 & 6 & 11 \\
        5 & 9 & 15 \\
        7 & 13 & 22 \\
        8 & 17 & 29 \\
        10 & 20 & 33 \\
        \vdots & \vdots & \vdots
    \end{tabular}
    \caption{3-set Partition}
}
\hfill
\parbox{.48\linewidth}{
    \centering
    \begin{tabular}{c|c|c}
        $S$ & $C$ & $D$ \\ \hline
        $A$ & $B$ & $A$ \\
        $A$ & $A$ & $A$ \\
        $B$ & $A$ & $B$ \\
        $B$ & $B$ & $A$ \\
        $A$ & $A$ & $A$ \\
        $B$ & $B$ & $A$ \\
        \vdots & \vdots & \vdots
    \end{tabular}
    \caption{Column Classifications}
}
\end{table}

Table 3 above shows some examples of how columns $S, C, D$ in Table 2 are distributed among the $A$ and $B$ numbers.

We first investigate how the elements in column $C, D$ are distributed among column $A$ and column $B$. From Theorem 3.2 we know that $\{a(n)\phi\} = 1-\frac{\{n\phi \}}{\phi} > 1-\frac{1}{\phi } = \frac{1}{\phi ^2}, \{b(n)\phi \} = \frac{\{n\phi \}}{\phi ^2} < \frac{1}{\phi ^2}.$ So by Theorem 4.3 and Theorem 4.4, we have the following results:
\\$c(k) \in A$ means $\{c(k)\phi\} > \frac{1}{\phi^2}$ if and only if
    $$ \{c(k)\phi \} = \left\{ \begin{array}{ll}
	   \frac{\sqrt{5}}{\phi}\{k\phi\}+\frac{1-\sqrt{5}}{2} > \frac{1}{\phi^2},  &  \text{ when } \{k\phi \} > \frac{1}{\sqrt{5}},\\
	    \frac{\sqrt{5}}{\phi}\{k\phi\}+\frac{3-\sqrt{5}}{2} > \frac{1}{\phi^2}, & \text{ when } \{k\phi \} < \frac{1}{\sqrt{5}}.
	\end{array}\right.$$
\\This means that in the first case $\{k\phi \} > \frac{5+\sqrt{5}}{10}$ while in the second case $\{k\phi \} < \frac{1}{\sqrt{5}}$.
\\$c(k) \in B$ means $\{c(k)\phi\} < \frac{1}{\phi^2}$ if and only if
    $$ \{c(k)\phi \} = \left\{ \begin{array}{ll}
	   \frac{\sqrt{5}}{\phi}\{k\phi\}+\frac{1-\sqrt{5}}{2} < \frac{1}{\phi^2},  &  \text{ when } \{k\phi \} > \frac{1}{\sqrt{5}},\\
	    \frac{\sqrt{5}}{\phi}\{k\phi\}+\frac{3-\sqrt{5}}{2} < \frac{1}{\phi^2}, & \text{ when } \{k\phi \} < \frac{1}{\sqrt{5}}.
	\end{array}\right.$$
\\This means that we have $\frac{1}{\sqrt{5}} < \{k\phi \} < \frac{5+\sqrt{5}}{10}$.
\\$d(k) \in A$ means $\{d(k)\phi\} > \frac{1}{\phi^2} \Longleftrightarrow \{d(k)\phi \} = 1-\frac{\sqrt{5}}{\phi^2}\{k\phi \} > \frac{1}{\phi^2} \Longleftrightarrow \{k\phi \} < \frac{5+\sqrt{5}}{10}$.
\\$d(k) \in B$ means $\{d(k)\phi\} < \frac{1}{\phi^2} \Longleftrightarrow \{d(k)\phi \} = 1-\frac{\sqrt{5}}{\phi^2}\{k\phi \} < \frac{1}{\phi^2} \Longleftrightarrow \{k\phi \} > \frac{5+\sqrt{5}}{10}$.
\vspace{\baselineskip}

Combining the results of classifications for column $C$ and $D$, we can conclude that $c(k) \in A \text{ and } d(k) \in A \Longleftrightarrow \{k\phi \} \in (0, \frac{1}{\sqrt{5}})$, $c(k) \in A \text{ and } d(k) \in B \Longleftrightarrow \{k\phi \} \in (\frac{5+\sqrt{5}}{10}, 1)$, $c(k) \in B \text{ and } d(k) \in A \Longleftrightarrow \{k\phi \} \in (\frac{1}{\sqrt{5}}, \frac{5+\sqrt{5}}{10})$, $c(k) \in B \text{ and } d(k) \in B \Longleftrightarrow \{k\phi \} \in (\frac{1}{\sqrt{5}}, \frac{5+\sqrt{5}}{10}) \text{ and } \{k\phi \} \in (\frac{5+\sqrt{5}}{10}, 1)$. In the last case, there is no $k \in \mathbb{N}$ such that $(c(k), d(k)) \in (B \times B)$.
\vspace{\baselineskip}

So $(c(k), d(k)) \in (A \times A) \cup (A \times B) \cup (B \times A)$, and $s(k)$ can be either in $A$ or in $B$. (The density of the set of integers $k$ such that $s(k) \in A$ remains unknown). Hence, we have
\begin{align*}
(s(k),c(k),d(k)) &\in (A \times A \times A) \cup (A \times A \times B) \cup (A \times B \times A)\\
&\cup (B \times A \times A) \cup (B \times B \times A) \cup (B \times A \times B).
\end{align*}
This proves the theorem.
\end{proof}

\begin{remark}
If we instead investigate how the elements of $A$ and $B$ are distributed among the $S, C, D$ numbers, we can also observe that there are only 5 out of 9 cases occurring. Our numerical data suggests the following density values:
\begin{center}
    \begin{tabular}{c|ccccccccc}
    
        Pair &    $SC$ & $CS$ & $DS$ & $CD$ & $SS$ & $DC$ & $SS$ & $CC$ & $DD$\\\hline
        Density & $\frac{1}{5}$ & $\frac{1}{5}$ & $\frac{1}{5}$ & $\frac{\phi-1}{5}$ & $\frac{3-\phi}{5}$ & $0$ & $0$ & $0$ & $0$
    \end{tabular}
    
    \end{center}
The exact reasoning of the distribution of these 9 pairs remains an open problem, but it is of interest to further investigate such densities.
\end{remark}

\section{Acknowledgements}
The authors discovered these results while participating in an Illinois Geometry Lab group
administered by Professors A.J.Hildebrand and K.B. Stolarsky. We thank them for advice on 
format and exposition. We also thank Professor Stolarsky for providing us with many of the references.


\begin{thebibliography}{10}

\bibitem{B}
Samuel Beatty, Problem 3173, {\em Amer. Math. Monthly} {\bf 33}(3) (1926), 159.

\bibitem{C}
T.Y. Chow, A new characterization of the Fibonacci-free partition, {\em Fibonacci Quart.} {\bf 29} (1991), 174--180.

\bibitem{C-L}
T.Y. Chow and C.D. Long, Additive partitions and continued fractions, {\em The Ramanujan Journal} {\bf 3} (1999), 55--72.

\bibitem{F}
 A.S. Fraenkel, Complementary systems of integers, {\em Amer. Math. Monthly} {\bf 84} (1977), 114--115.

\bibitem{F-P-S}
A.S. Fraenkel, H. Porta, and K.B. Stolarsky, Some arithmetical semigroups in Analytic Number Theory: A Conference in Honor of Paul T. Bateman, Birkhauser, Boston, 1990.

\bibitem{H-Z}
 C. Ho and S. Zimmerman, On certain dense, uncountable subsets of the real line, {\em Amer. Math. Monthly} {\bf 125} (2018), 339--346.
 
\bibitem{N}
I. Niven, {\em Diophantine Approximations}, Dover Publications, Inc., Mineola, NY, 2008.
\newblock Reprint of the 1963 original.

\bibitem{N-Z-M}
I. Niven, H.S. Zuckerman, and H.L. Montgomery, {\em An Introduction to the Theory of Numbers}, Fifth Edition, John Wiley \& Sons, New York, 1991.

\bibitem{P-S}
H. Porta and K.B. Stolarsky, A golden iterated map number system: results and conjectures, {\em Int. J. Number Theory} {\bf 11} (2015), 1617--1631.

\bibitem{W}
W.A. Wythoff, A modification of the game of nim, {\em INieuw Arch. Wisk.} {\bf 7} (1907), 199--202.

\bibitem{Cassini}
H.S.M. Coxeter and S.L. Greitzer, {\em Geometry Revisited.}, Mathematical Association of America, Washington, DC, 1967.

\bibitem{U}
J. V. Uspensky, On a problem arising out of the theory of a certain game,Amer.  Math.Monthly34(10) (1927), 516–521.

\end{thebibliography}
\end{document}